\newcommand{\ricardoline}[1]{\todo[inline,color=red!40]{#1}}
\newcommand{\najib}[1]{\todo[color=blue!40]{#1}}
\newcommand{\najibline}[1]{\todo[inline,color=blue!40]{#1}}
\theoremstyle{plain}
  \newtheorem{thm}{Theorem}
  \newtheorem{prop}[thm]{Proposition}
  \newtheorem{cor}[thm]{Corollary}
  \newtheorem{lemma}[thm]{Lemma}
\theoremstyle{definition}
  \newtheorem{defi}[thm]{Definition}
  \newtheorem{convention}[thm]{Convention}
\theoremstyle{remark}
  \newtheorem{ex}[thm]{Example}
  \newtheorem{rem}[thm]{Remark}
\tikzset{ext/.style={circle, draw,inner sep=1pt},int/.style={circle,draw,fill,inner sep=1pt},nil/.style={inner sep=1pt}}
\tikzset{exte/.style={circle, draw,inner sep=3pt},inte/.style={circle,draw,fill,inner sep=3pt}}
\tikzset{diagram/.style={matrix of math nodes, row sep=3em, column sep=2.5em, text height=1.5ex, text depth=0.25ex}}
\tikzset{diagram2/.style={matrix of math nodes, row sep=0.5em, column sep=0.5em, text height=1.5ex, text depth=0.25ex}}
\newcommand{\alg}[1]{\mathfrak{{#1}}}
\newcommand{\folF}{\mathcal{F}}
\newcommand{\R}{{\mathbb{R}}}
\newcommand{\Q}{\mathbb{Q}}
\newcommand{\Z}{{\mathbb{Z}}}
\newcommand{\U}{{\mathcal{U}}}
\newcommand{\Graphs}{{\mathsf{Graphs}}}
\newcommand{\BGraphs}{\mathsf{BGraphs}}
\newcommand{\pdu}{}
\newcommand{\sslash}{/\mkern-5mu/}
\newcommand{\qiso}{\xrightarrow{\sim}}
\newcommand{\qisor}{\xleftarrow{\sim}}
\newcommand{\Gra}{{\mathsf{Gra}}}
\newcommand{\biGC}{\GC^{\mathrm{bi}}}
\newcommand{\biGr}{\Graphs^{\mathrm{bi}}}
\newcommand{\BGBbiGr}{\mathsf{BGBGraphs}^{\mathrm{bi}}}
\newcommand{\op}{\mathcal}
\newcommand{\Lie}{\mathsf{Lie}}
\newcommand{\SO}{\mathrm{SO}}
\newcommand{\fT}{\mathsf{Tree}}
\newcommand{\fTm}{\mathsf{Tree}_{*}}
\newcommand{\catC}{\mathcal C}
\newcommand{\dgca}{\mathrm{Dgca}}
\newcommand{\FM}{\mathsf{FM}}
\newcommand{\FFM}{\mathsf{FM}^{\mathrm{fr}}}
\newcommand{\bpm}{\begin{pmatrix}}
\newcommand{\epm}{\end{pmatrix}}
\newcommand{\GC}{\mathrm{GC}}
\newcommand{\BG}{\mathrm{BG}}
\newcommand{\BO}{\mathrm{BO}}
\newcommand{\hotimes}{\mathbin{\hat\otimes}}
\DeclareMathOperator{\Hom}{Hom}
\DeclareMathOperator{\Emb}{Emb}
\DeclareMathOperator{\Map}{Map}
\DeclareMathOperator{\Discs}{Discs}
\DeclareMathOperator{\Conf}{Conf}
\DeclareMathOperator{\id}{id}
\DeclareMathOperator{\Tot}{Tot}
\DeclareMathOperator{\Pf}{Pf}
\DeclareMathOperator{\modo}{mod}
\newcommand{\stG}{\Graphs}
\newcommand{\beq}[1]{\begin{equation}\label{#1}
}
\newcommand{\eeq}{\end{equation}}
\newcommand{\Fr}{\mathrm{Fr}}
\newcommand{\fr}{\mathrm{fr}}
\newcommand{\Top}{\mathsf{Top}}
\DeclareMathOperator{\OmPA}{\Omega_{\mathrm{PA}}}
\newcommand{\oW}{\mathring{W}}
\DeclareMathOperator{\BarOp}{Bar}
\newcommand{\tadp}
{{
\begin{tikzpicture}[baseline=-.55ex,scale=.7, every loop/.style={}]
 \node[circle,draw,fill,inner sep=.5pt] (a) at (0,0) {};
 \draw (a) edge[loop] (a);
\end{tikzpicture}}}
\tikzset{ourTree/.style = {grow' = up, level distance = 5mm}}
\begin{document}

\title{A model for framed configuration spaces of points}

\author{Ricardo~Campos}
\address{Ricardo~Campos:  Institut de Mathématiques de Toulouse, UMR5219, Université de Toulouse, CNRS, UPS, F-31062 Toulouse Cedex 9, France}
\email {ricardo.campos@math.univ-toulouse.fr}

\author{Julien~Ducoulombier}
\address{Julien~Ducoulombier: Université Sorbonne Paris Nord, LAGA, CNRS, F-93430, Villetaneuse, France}
\email{ducoulombier@math.univ-paris13.fr}

\author{Najib~Idrissi}
\address{Najib~Idrissi: Université Paris Cité, Sorbonne Université, CNRS, IMJ-PRG, F-75013 Paris, France}
\email{najib.idrissi-kaitouni@u-paris.fr}

\author{Thomas~Willwacher}
\address{Thomas~Willwacher: Department of Mathematics, ETH Zurich, Zurich, Switzerland}
\email{thomas.willwacher@math.ethz.ch}

\subjclass[2010]{Primary: 16E45; Secondary: 53D55, 53C15, 18G55}

\begin{abstract}
  We study configuration spaces of framed points on oriented closed smooth manifolds.
  Such configuration spaces admit natural actions of the framed little discs operads, that play an important role in the study of embedding spaces of manifolds and in factorization homology.
  We construct real combinatorial models for these operadic modules, for orientable closed smooth manifolds.
\end{abstract}

\maketitle

\section{Introduction}
\label{sec:introduction}

Let $N$ be a smooth oriented manifold
and let $\mathbb D^n$ denote the $n$-dimensional disc.
Consider the space of orientation-preserving embeddings of $n$-discs in $N$,
\[ \Discs_k^n(N) = \Emb(\underbrace{\mathbb D^n \sqcup \dots \sqcup \mathbb D^n}_{k\text{ discs}}, N). \]
The collection of spaces $\Discs^n(N) \coloneqq \{\Discs_k^n(N)\}_{k\geq 0}$ admits a natural action of the framed little $n$-discs operad, $E_n^{\fr} \simeq \Discs^{n}(\mathbb{D}^{n})$.
The purpose of this paper is to compute the real homotopy type of the right $E_n^{\fr}$-module $\Discs^n(N)$, by providing combinatorial (graphical) models when $N$ is oriented and closed.

Our result is one step towards a real version of the Goodwillie--Weiss manifold calculus, as we shall briefly outline.
The Goodwillie--Weiss manifold calculus interprets embedding spaces in terms of the operadic right modules $\Discs^n(N)$. If $M$ and $N$ are smooth oriented manifolds of dimensions $m$ and $n$ respectively such that $n-m\geq 3$, then there is a weak equivalence~\cite{BoavidaWeiss2013,Turchin2013}:
\[ \Emb(M,N) \simeq \Map^h_{E_{m}^{\fr}-\mathrm{mod}} (\Discs^m(M),\Discs^m(N)), \]
where the right-hand side is the derived mapping space of morphisms from $\Discs^{m}(M)$ to $\Discs^{m}(N)$ viewed as modules over the framed little discs operads.

This right-hand side is still hard to compute, but one may hope to determine at least its real or rational homotopy type as follows.
Denote by $\Omega(E_{n}^{\fr})$ a cooperad in differential graded commutative algebras quasi-isomorphic to e.g.\ Sullivan forms on $E_n^{\fr}$, and similarly by $\Omega(\Discs^m(M))$, and $\Omega(\Discs^n(N))$ cooperadic comodules quasi-isomorphic to e.g.\ Sullivan forms on $\Discs^m(M)$ and $\Discs^n(N)$. Then one has a natural map
\begin{multline} \label{equ:ratGW}
  \Map^h_{E_{m}^{\fr}-\mathrm{mod}} (\Discs^m(M),\Discs^m(N)) \\ \to \Map^h_{\Omega(E_{m}^{\fr})-\mathrm{comod}} (\Omega(\Discs^m(N)),\Omega(\Discs^m(M))).
\end{multline}
In good cases, one may hope that the target of the map \eqref{equ:ratGW} can be effectively computed, and that the map is is finite-to-one on $\pi_0$ and a rational equivalence on each connected component (see~\cite{FresseTurchinWillwacher2020}).
Analogous results have been proved for the case of higher dimensional long knots ($M = \R^{m}$, $N = \R^{n}$, $n-m\geq 3$) in \cite{FresseTurchinWillwacher2017} and for more general spaces of embeddings in $N=\R^n$ in \cite{FresseTurchinWillwacher2020}.
The goal of the present paper is to contribute to solving the general case by providing combinatorial models for $\Omega(\Discs^m(M))$, where $M$ is a closed oriented manifold of dimension $m=\dim M$ (i.e.\ tools to effectively compute the right-hand side of~\eqref{equ:ratGW}).

Let us briefly describe our results. For technical reasons we will be working with a configuration space version of $\Discs^m(M)$.
The ordered configuration space of $k$ points on $M$ is given by
\[ \Conf_k(M) \coloneqq \{(x_1,\dots,x_k) \in M^k \mid x_i\neq x_j, \text{ for $i\neq j$} \}. \]
Let us fix a Riemannian metric on $M$.
The configuration space of $k$ framed points on $M$, $\Conf_k^{\fr}(M)$, consists of configurations in $\Conf_k(M)$ with the additional prescription of positively oriented orthonormal bases of the tangent spaces at each of the points in the configuration,
\[
  \Conf_k^{\fr}(M)
  \coloneqq
  \left\{ (x,B_1,\dots,B_k)
  \middle|
  \begin{array}{ll}
    x \in \Conf_{k}(M), \\
    B_i \text{: oriented orthonormal basis of } T_{x_i}M
  \end{array}
  \right\}.
\]
Then one has a natural weak equivalence

\[
  \Discs^m_k(M) \xrightarrow{\sim} \Conf^{\fr}_k(M)
\]
sending a configuration of discs to the configuration of their centers, equipped with the orthonormalization of the push-forward of the standard frame at the center of the discs.
Furthermore we consider the Fulton--MacPherson--Axelrod--Singer compactification $\FM_{M}$ of $\Conf(M)$ and $\FFM_{M}$ of $\Conf^{\fr}(M)$.
The latter object carries a natural action of a Fulton-MacPherson-Axelrod-Singer-version of the framed $E_m$-operad $\FM_m^{\fr}$, see \cite{Markl1999}.

Some of the authors described in \cite{CamposWillwacher2016, Idrissi2016} a graphical real model $\Graphs_M(k)$ for $\Conf_k(M)$.
Elements of $\Graphs_M(k)$ are linear combinations of undirected diagrams with $k$ numbered ``external'' vertices, some further ``internal'' vertices, and zero, one, or more decorations in the cohomology $H(M)$ at each vertex:
\[
  \begin{tikzpicture}[every pin edge/.style={densely dotted}, every pin/.style={font=\footnotesize}, scale=2]
    \node[ext, pin={above left}:{$\omega_2$}] (v1) at (0,0) {$\scriptstyle 1$};
    \node[ext] (v2) at (.5,0) {$\scriptstyle 2$};
    \node[ext] (v3) at (1,0) {$\scriptstyle 3$};
    \node[ext, pin={above right}:{$\omega_3$}] (v4) at (1.5,0) {$\scriptstyle 4$};
    \node[int] (w1) at (.25,.5) {};
    \node[int, pin={above right}:{$\omega_1$}, pin={above left}:{$\omega_1$}] (w2) at (1,1) {};
    \node[int] (w3) at (1,.5) {};
    \draw (v1) edge (v2) edge (w1) (w1) edge (w2) edge (w3) edge (v2) (v3) edge (w3) (v4) edge (w3) edge (w2) (w2) edge (w3);
  \end{tikzpicture}
\]
There is a quasi-isomorphism
\beq{equ:stGmap}
\stG_M(k) \to \OmPA(\FM_M(k))
\eeq
into the (piecewise semi-algebraic) differential forms on $\FM_M$ given by natural ``Feynman rules''.
In this paper we extend $\stG_M$ to a model $\stG_M^{\fr}$ for the framed configuration space.
As a graded algebra, it is merely obtained by further decorating each external vertex by a model of $\SO(m)$; however, the differential is more complex.
Our graph complex comes with a zigzag:
\begin{equation}
  \stG_M^{\fr}(k) \leftarrow \cdot \to  \OmPA(\FFM_M(k)).
\end{equation}
We furthermore describe an explicit cooperadic coaction of a graphical model $\stG^{^\fr}_{m}$ of $E^{\fr}_m$ on $\stG_M^{\fr}$.
Our main result is then that this combinatorial coaction indeed models the desired topological action of $E^{\fr}_m$ on $\FM_M$.
\begin{thm}[See Theorem~\ref{thm:model-ffmm}]
  The zigzag $\stG_M^{\fr}(k) \gets \cdot \to \OmPA(\FFM_M(k))$ is a weak equivalence, and it is compatible with the action of $\FFM_{m}$ on $\FFM_{M}$.
\end{thm}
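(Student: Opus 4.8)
The plan is to prove the two assertions separately and then combine them: first, that for each arity $k$ the zigzag is a quasi-isomorphism of differential graded commutative algebras, and second, that the maps assemble into a morphism of $\FM_n$-(co)modules up to the zigzag. For the first part I would use that $\FFM_M(k)$ fibers over $\FM_M(k)$ with fiber $\SO(n)^{\times k}$ --- it is essentially the pullback along $\FM_M(k)\to M^{\times k}$ of the $k$-fold power of the oriented orthonormal frame bundle $\Fr(M)\to M$:
\[
\SO(n)^{\times k} \longrightarrow \FFM_M(k) \longrightarrow \FM_M(k).
\]
Dually, $\stG_M^{\fr}(k)$ is obtained from the model $\stG_M(k)$ of \cite{CamposWillwacher2016, Idrissi2016} by adjoining at each external vertex a copy of a small commutative model $A_{\SO(n)}$ of $\SO(n)$ (one may take $H(\SO(n);\R)$, by formality of compact Lie groups), with differential twisted by the characteristic forms of $\Fr(M)$ --- the Euler and Pontryagin classes --- pushed into internal vertices via the $H(M)$-decorations. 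The middle term of the zigzag should be the analogous object built from piecewise-algebraic forms on $M$ and on $\Fr(M)$; it maps to $\stG_M^{\fr}(k)$ by a formality/homotopy-transfer quasi-isomorphism as on the base, and it maps to $\Omega_{PA}(\FFM_M(k))$ by Feynman rules attached to a \emph{framed propagator} $\varphi^{\fr}\in\Omega_{PA}(\FFM_M(2))$ that restricts fiberwise to the propagator of the unframed model and encodes a metric connection on $\Fr(M)$.

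To show the second map is a quasi-isomorphism I would filter both sides so as to collapse the internal structure on the associated graded --- for instance by the number of internal vertices, or by the Leray filtration of the fibration on the topological side together with a matching filtration on graphs. On the first page the map is then identified with the tensor product of the unframed quasi-isomorphism $\stG_M(k)\xrightarrow{\sim}\Omega_{PA}(\FM_M(k))$ with the $k$-th tensor power of $A_{\SO(n)}\xrightarrow{\sim}\Omega_{PA}(\SO(n))$, once one checks that the two twistings (by the characteristic forms of $\Fr(M)$, respectively by the curvature of the chosen connection) agree; a boundedness argument for the filtration then yields convergence and the claim. For the $\FM_n$-compatibility, recall that the action of $\FM_n$ on $\FFM_M$ is by operadic insertion of a little-discs configuration at a point of a configuration on $M$, the frame at that point being used to embed and to frame the inserted points; this is encoded by the boundary strata of $\FFM_M$ where a cluster of external points collapses, while the $\stG_n$-coaction on $\stG_M^{\fr}(k)$ is the dual operation of cutting out the subgraph on the colliding vertices. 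Since all three objects in the zigzag are (co)modules and the two maps are strict, it suffices to analyze $\varphi^{\fr}$ near the relevant boundary face and verify that its restriction there equals the pullback of Kontsevich's propagator on $\FM_n(2)$ used to build $\stG_n$, together with the standard vanishing lemmas (hidden faces, connected components without external legs, tadpoles, bivalent internal vertices), now extended to $\SO(n)$-decorated graphs, which guarantee that the remaining boundary contributions in the relevant Stokes computation cancel by symmetry or vanish.

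The principal obstacle is the construction of the framed propagator $\varphi^{\fr}$: it must simultaneously be symmetric, closed up to the diagonal correction term, compatible with Kontsevich's propagator under $\FM_n$-insertion, and such that the associated Feynman integrals have no anomalous contributions. The extra frame directions make the last two conditions subtle, because the connection form on $\Fr(M)$ enters the computations; an averaging argument over $\SO(n)$ or a metric-adapted choice will likely be needed to keep everything equivariant, and proving the accompanying vanishing lemmas for $\SO(n)$-decorated graphs --- with the $n$ even versus $n$ odd dichotomy forced by the Euler class --- is the technical heart of the argument.
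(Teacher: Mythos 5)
Your overall shape (a zigzag whose middle term carries PA forms, an arity-wise quasi-isomorphism reduced by a filtration to the unframed case tensored with a model of $\SO(n)^{\times k}$, and compatibility checked on boundary strata) is not the route the paper takes, and the point where you locate the "principal obstacle" is in fact a genuine gap rather than a technicality. Your plan requires a framed propagator $\varphi^{\fr}\in\OmPA(\FFM_M(2))$ whose Feynman rules give a \emph{strict} map of comodules, with the $\stG_n$-coaction on $\stG_M^{\fr}$ recovered by restricting $\varphi^{\fr}$ to collision faces. But the coaction on $\stG_M^{\fr}$ is not just "cut out the colliding subgraph": it is twisted by the $\hat H(G)$-coaction on $\stG_n$, governed by the Maurer--Cartan element of Theorem~\ref{thm:MCform} (Euler class times the tadpole for $n$ even, top Pontryagin class times the $\theta$-graph for $n$ odd). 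Reproducing that twist from a single form on $\FFM_M(2)$ by boundary restriction is exactly what one cannot arrange strictly: $\OmPA(G)$ is not a Hopf algebra, and even for the operad alone there is no direct map from $\Graphs_n\circ\hat H(G)$ to $\OmPA(\FFM_n)$ compatible with the structure (this is why \cite{KhoroshkinWillwacher2017} and the present paper go through $W$-constructions, simplicial bar resolutions $G^{\bullet}\times X$, and the equivariant propagator in a Cartan model). Moreover the analogue of your "strict compatibility" claim fails already one level down: the comparison square \eqref{equ:AGraphshoCD} between $\Graphs_n^M$ and the resolved model only commutes up to homotopy, which is established by a parallel-transport/gauge argument (Proposition~\ref{prop:bgraphsdiag}) using two-colored graph complexes, and this failure forces the four-row zigzag \eqref{equ:cohtpydiagram} with the interval trick $[t,dt]$. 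Your sketch assumes away precisely this difficulty, and "extend the standard vanishing lemmas to $\SO(n)$-decorated graphs" is not a substitute for it.

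By contrast, the paper never integrates against a framed propagator on $\FFM_M$. It factors the problem through the fiberwise operad $\FM_n^M=\Fr_M\times_G\FM_n$: first it upgrades $\stG_M$ to a comodule over $\stG_n^M=A\otimes_{H(BG)}\Graphs_n$ with a direct integral map (Theorem~\ref{thm:graphs-m-model}); then it expresses $\FFM_M$ via the general framing construction $\Fra_{\FM_n,\Fr_M,\FM_M}$ and exploits its homotopy functoriality, with the frame bundle modeled by $(A\otimes\hat H(G),d)$ (note the resolution $\hat H(G)$, not $H(\SO(n))$, which your proposal uses and which is too small to carry the required coaction data), and finally invokes the extension of \cite[Theorem 5.5]{KhoroshkinWillwacher2017} to modules to pass to the framed operad. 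Your filtration argument for the arity-wise quasi-isomorphism is sound in spirit (the paper uses freeness of $\Graphs_M(r)$ over $A^{\otimes r}$ together with the principal-bundle model of \cite{GHV1976}, since the base is not simply connected), but without a construction of $\varphi^{\fr}$ satisfying the boundary and coaction constraints --- or an argument replacing strict compatibility by a controlled homotopy as in the paper --- the compatibility half of the theorem is not proved.
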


We furthermore adopt an alternative viewpoint, which serves as an intermediate result in the proof of the theorem above.
In general, the non-framed configuration spaces $\FM_M$ do not carry an action of the little discs or Fulton--MacPherson operads.
However, there exists a fiberwise version $\FM_m^M$ of the Fulton--MacPherson operad.
It is an operad in topological spaces over $M$, and the fiber over $x\in M$ is a compactification of the space of configurations of points in the tangent space $T_xM$.
The collection $\FM_{M}$ is equipped with an action of $\FM_{m}^{M}$, almost tautologically.

Our second main result is to upgrade the dg commutative algebra model $\stG_M$ of $\FM_M$ so as to capture also the action of $\FM_m^M$ on $\FM_M$:
\begin{thm}[See Theorem~\ref{thm:graphs-m-model}]
  There is a graph complex $\stG_{m}^{M}$, which is a model for $\FM_{m}^{M}$, and which acts on the model $\stG_{M}$ of $\FM_{M}$ from~\cite{CamposWillwacher2016}, so that the map \eqref{equ:stGmap} respects the (homotopy) comodule structures on both sides.
\end{thm}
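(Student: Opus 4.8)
The plan is to upgrade, step by step, the construction of $\stG_M$ and of the Feynman‑rules quasi‑isomorphism~\eqref{equ:stGmap} from~\cite{CamposWillwacher2016} to the fiberwise setting, keeping track of the extra structure coming from the tangent bundle. I would first define $\stG_n^M$ by adjoining to Kontsevich's graphs cooperad $\Graphs_n$ (the model for $\FM_n$) decorations valued in $H(M)$ — equivalently in a fixed Poincar\'e‑duality \textsc{cdga} model $A$ of $M$, with a representative of the Euler class $e=e(TM)$ singled out — which record the position in $M$ of the base point of the fiber $T_xM$; concretely $\stG_n^M$ is a cooperad in differential graded $A$‑modules whose underlying graphs carry the $\Graphs_n$ cooperadic structure (sub‑graph extraction and contraction). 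Its differential is the sum of the $\Graphs_n$ differential (vertex splitting and edge contraction), the internal differential of $A$, the ``diagonal'' term that creates an edge decorated by the diagonal class of $M$, and — crucially — a term coupling the graph to the chosen representative of $e$. This Euler term is exactly what lets $\stG_n^M$ detect the nontriviality of the bundle $\FM_n^M(k)\to M$, which a Gysin‑sequence computation shows is governed by $e(TM)$: already for $k=2$, where $\FM_n^M(2)\cong S(TM)$ admits a global fiberwise volume form only if $e(TM)=0$, which fails whenever $\chi(M)\ne 0$, so that $H^*(\FM_n^M(k))\ne H^*(M)\otimes H^*(\FM_n(k))$ in general.

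To prove that $\stG_n^M$ is a model for $\FM_n^M$ I would construct explicit Feynman rules $\stG_n^M\to\Omega_{PA}(\FM_n^M)$: fix a \emph{fiberwise propagator} $\psi\in\Omega_{PA}^{n-1}(\FM_n^M(2))$, a global form restricting on each sphere fiber of $S(TM)$ to the normalized volume form up to the prescribed Euler correction, and send a graph to the fiberwise integral over its internal vertices of the product of the pullbacks of $\psi$ along the edges and of the pullbacks of the $A$‑decorations along the base‑point map. That these rules form a morphism of cooperads in \textsc{cdga}s follows, as in the absolute case, from Stokes' theorem together with the vanishing of hidden boundary faces; that they are a quasi‑isomorphism I would prove by the same filtered/spectral‑sequence argument as in~\cite{CamposWillwacher2016} (filtering by the number of internal vertices and of edges), whose associated graded recovers, fiberwise over $M$, Kontsevich's quasi‑isomorphism $\Graphs_n\qiso\Omega_{PA}(\FM_n)$, glued over $M$ by Poincar\'e duality of $A$.

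Next I would exhibit $\stG_M$ as a right (homotopy) comodule over $\stG_n^M$. For $\Gamma\in\stG_M(N)$ and $S\subseteq\{1,\dots,N\}$, the corresponding component of the coaction map $\stG_M(N)\to\stG_M\bigl((\{1,\dots,N\}\setminus S)\sqcup\{*\}\bigr)\otimes\stG_n^M(S)$ collapses the full sub‑graph of $\Gamma$ on $S$, together with the internal vertices it spans, to a single new external vertex $*$, producing a graph in $\stG_M$ on the collapsed label set tensored with the extracted sub‑graph, the latter viewed — after pushing its $H(M)$‑decorations to the base point — as an element of $\stG_n^M(S)$. This is the combinatorial shadow of the boundary stratification $\partial_S\FM_M(N)\cong\FM_M\bigl((\{1,\dots,N\}\setminus S)\sqcup\{*\}\bigr)\times_M\FM_n^M(S)$; checking that it is a chain map and satisfies the comodule axioms is a lengthy but routine sign‑and‑combinatorics verification parallel to the cooperad axioms already established for $\stG_M$. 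Finally, to see that~\eqref{equ:stGmap} respects the comodule structures, I would choose the propagator $\varphi\in\Omega_{PA}^{n-1}(\FM_M(2))$ of~\cite{CamposWillwacher2016} and the fiberwise propagator $\psi$ compatibly, so that the restriction of $\varphi$ to the collision face $\partial_{\{1,2\}}\FM_M(2)\cong S(TM)$ equals $\psi$; the square comparing the two sets of Feynman rules then commutes up to an explicit homotopy obtained by fiber integration over the collapsing families, the error terms being boundary integrals that either vanish or reproduce the expected terms because of the propagator matching — once more a Stokes'‑theorem argument of exactly the kind used throughout~\cite{CamposWillwacher2016, Idrissi2016}.

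The main obstacle is the simultaneous, consistent bookkeeping of the Euler correction: since no global fiberwise‑closed propagator on $S(TM)$ exists when $e(TM)\ne 0$, $\psi$ cannot be taken fiberwise‑closed, and the resulting correction terms must be threaded coherently through the definition of $\stG_n^M$ (its extra differential term), the Feynman rules, and the comodule‑compatibility homotopies, so that every Stokes' identity closes. A secondary difficulty, already present for $\stG_M$ in~\cite{CamposWillwacher2016}, is that — since $\Omega_{PA}$ does not strictly carry products to tensor products — the comparison of comodule structures is expected to hold only up to coherent homotopy, i.e.\ as an $\infty$‑morphism (or a zigzag) of homotopy comodules; the statement and the homotopies must therefore be set up within that formalism rather than strictly, which I would handle using the operadic‑twisting and $\infty$‑(co)module machinery already employed in the absolute case.
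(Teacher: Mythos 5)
Your overall architecture---graphs tensored with a model $A$ of $M$, an Euler-coupled differential, Feynman rules built from a fiberwise propagator on the sphere bundle $\FM_n^M(2)\cong S(TM)$, a coaction on $\stG_M$ by subgraph collapse, and a Stokes/propagator-matching argument for compatibility with \eqref{equ:stGmap}---is essentially the paper's. However, there is a concrete error in your definition of $\stG_n^M$: the correct differential is only $d_A+\delta_{\mathrm{contr}}+E\,T\cdot$, i.e.\ the internal differentials plus the term that \emph{removes} an edge and multiplies the $A$-factor by the Euler representative $E$; there is no separate ``diagonal-class'' summand. All vertices of a fiberwise graph live in a single tangent space over one base point, so the only failure of closedness of the fiberwise propagator $\psi$ is $d\psi=\pi^*E$ (it is a global angular form), never a diagonal term---the diagonal class enters only the differential of $\stG_M$ through $\delta_{\mathrm{cut}}$. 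And if one does push a diagonal decoration to the common base point, $\sum_i\pm e_ie_i^*$ \emph{is} $E$, so your two extra summands coincide; keeping both doubles the coefficient of the Euler term, after which the Feynman rules are no longer a chain map and already for $k=2$ the complex fails to compute the Gysin sequence of $S(TM)$. Relatedly, you omit the consistency issue that in $\stG_n^M$ connected components with only internal vertices must be identified with forms on $M$ (the partition function $z$): one needs the vanishing lemma (dimension count for low-valence vertices, Kontsevich's symmetry trick for bivalent ones, degree counting for $n\ge 3$, a separate argument for $n=2$) showing that $z$ vanishes on every such graph except the tadpole, whose value is $E$. This lemma is exactly what produces, and pins down the coefficient of, the Euler term and what makes $\omega$ compatible with the differential; without it your ``vanishing of hidden boundary faces'' does not close.

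Once the definition is corrected, your proof that $\stG_n^M\to\OmPA(\FM_n^M)$ is a quasi-isomorphism (filter by internal vertices and edges, associated graded recovering Kontsevich's map fiberwise, then a Serre-type comparison over $M$) is a legitimate alternative to the paper's argument, which instead realizes $\stG_n^M$ as $A\otimes_{H(BG)}\BGraphs_n^{m}$ via the homotopy pullback square relating $\FM_n^M$, $\FM_n\sslash G$, $M$ and $BG$ and the pullback-to-pushout principle, leaning on the equivariant model of \cite{KhoroshkinWillwacher2017}; your route is more self-contained but places all the weight on the hand-proved vanishing lemma above, which the $BG$-route packages conceptually. The remaining steps---the coaction $\circ_i^*:\stG_M(r+s-1)\to\stG_M(r)\otimes_A\stG_n^M(s)$ by subgraph contraction, and compatibility with \eqref{equ:stGmap} using that the propagator on $\FM_M(2)$ restricts on the collision face to a global angular form---coincide with the paper's proof of Theorem~\ref{thm:graphs-m-model}, where these checks are indeed direct from the definitions in the homotopy (co)module framework you correctly anticipate.
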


This paper is organized as follows.
In Section~\ref{sec:backgr-recoll}, we lay out the necessary background for the next sections: the Axelrod--Singer--Fulton--MacPherson compactifications $\FM_{m}$ and $\FM_{M}$, the definition of homotopy operads and related structures, Kontsevich's proof of the formality of $\FM_{m}$, the graphical models for $\FM_{M}$, and the equivariant graphical models for $\FM_{m}^{\fr} = \FM_{m} \rtimes \SO(m)$.
In Section~\ref{sec:fibered-little-discs}, we define a fiberwise version $\FM_{m}^{M}$ of the operad $\FM_{m}$, which acts almost tautologically on $\FM_{M}$ whether $M$ is parallelized or not.
We provide a graphical model for this operad.
In Section~\ref{sec:fram-conf-module}, we define the framed configuration module $\FM_{M}^{\fr}$ as a general instance of a ``framing construction'' for right multimodules.
We define a graphical model for $\FM_{M}^{\fr}$ and we prove that it is compatible with the graphical model for $\FM_{m}^{\fr}$.
Finally, in Section~\ref{sec:frame-change}, we explain how our graphical model is related to the models of~\cite{CamposWillwacher2016,Idrissi2016} when the manifold $M$ happens to be parallelized.

This article contains three appendices
In the first, Appendix~\ref{sec:explicit_prop}, we write down an explicit formula for the ``equivariant propagator'', a certain $\SO(n)$-equivariant form on $\FM_{m}$ that is used to define the graphical model for $\FM_{m}^{\fr}$.
Then, in Appendix~\ref{sec:homot-comm-diagr}, we prove a key part of the main theorem of Section~\ref{sec:zigzag}, namely, that a certain diagram is commutative.
Finally, in Appendix~\ref{sec:hopf-formality}, we prove a result regarding Hopf formality of compact Lie groups.

\begin{rem}
  In this paper we will deviate notationally from \cite{CamposWillwacher2016} and denote the graphical commutative algebra model of the configuration space by $\stG_M$ instead of ${}^*\stG_M$ to simplify the notation. We hope that nevertheless no confusion arises.
\end{rem}

\begin{rem}
  We only provide models for $\Discs^{m}(M)$ in Equation~\eqref{equ:ratGW}.
  The spaces $\Discs^{m}(N)$ (for $m < \dim N$) are homotopy equivalent to $m$-framed configuration spaces, i.e.\ configurations of points endowed with $m$ linearly independent sections of the tangent bundle~\cite[Section~2]{Turchin2013}, and are more complicated.
\end{rem}

\subsection{Acknowledgments}

R.C.\ was supported by the  Swiss National Science Foundation  Early Postdoc.Mobility grant \texttt{P2EZP2\_174718}.
J.D., N.I. and T.W.\ were supported by the ERC starting grant 678156--GRAPHCPX during the completion of this work.
N.I.\ contributes to the IdEx University of Paris ANR-18-IDEX-0001.
The authors are very thankful to the referee for an extremely thorough review of the paper.

\section{Background and recollections}
\label{sec:backgr-recoll}

From now on, we use the letter $n$ for the dimension of the manifold $M$ under consideration, as $m$ will be reserved for Maurer--Cartan elements.

\subsection{The Fulton-MacPherson-Axelrod-Singer compactifications}
\label{sec:conf-spac-fult}

The configuration spaces of a manifold, even a compact one, are generally not compact.
One way to fix this is through the Fulton--MacPherson--Axelrod--Singer compactification process.
We will only give a quick account and refer to~\cite{FultonMacPherson1994,AxelrodSinger1994,Sinha2004,LambrechtsVolic2014} for more details.

First, let us consider $M = \R^{n}$.
We can first mod out the translations and the positive rescaling in $\Conf_{k}(\R^{n})$ to obtain the space $\Conf_{k}(\R^{n}) / (\R^{n} \rtimes \R_{>0})$, which is a manifold of dimension $nk-n-1$ if $k \ge 2$ (and a singleton otherwise).
The Fulton--MacPherson compactification $\FM_{n}(k)$ is a manifold with corners whose interior is $\Conf_{k}(\R^{n}) / \R^{n} \rtimes \R_{> 0}$, and the inclusion is a homotopy equivalence.
The elements of $\FM_{n}(k)$ can be seen as configurations of $k$ points in $\R^{n}$, where the points are allowed to become ``infinitesimally close'' to each other.
The collection $\FM_{n} = \{ \FM_{n}(k) \}_{k \ge 0}$ of all these spaces assembles to form a topological operad, the Fulton--MacPherson operad, obtained by considering ``insertion'' of infinitesimal configurations. The element obtained from the operadic composition in the picture below can be interpreted as follows: the points $1$, $5$ and $6$ are infinitesimally close, and so are the points $2$, $3$ and $4$. Moreover, the distance between the points $2$ and $4$ is infinitesimally small compared to the distance between $2$ and $3$.

\begin{figure}[!h]
  \begin{center}
    \includegraphics[width=\textwidth]{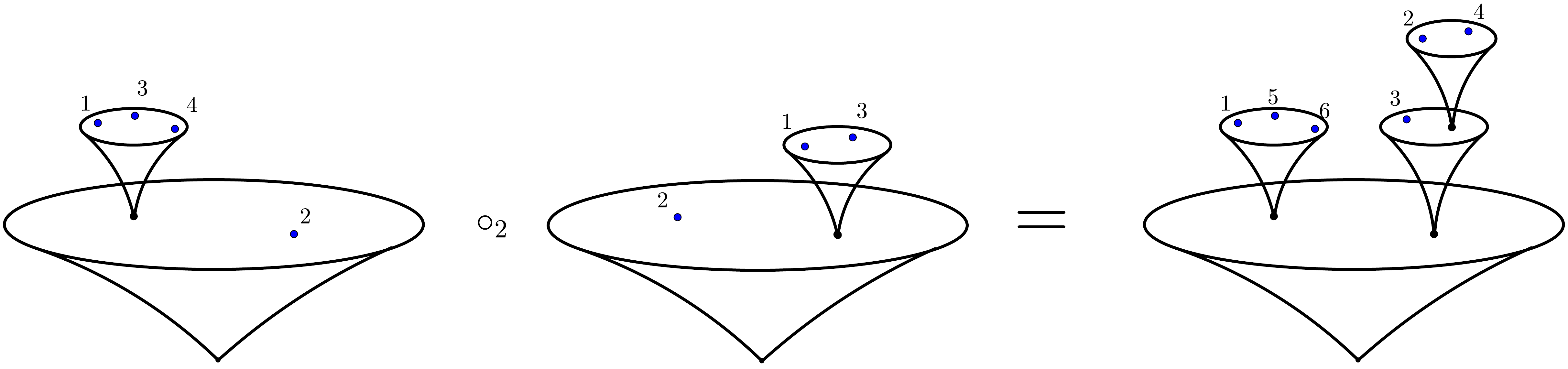}
    \caption{Illustration of the operadic structure of $\FM_{n}$.}
  \end{center}
\end{figure}

\begin{rem}
  The operad $\FM_{n}$ is weakly equivalent to the better-known little discs operad, i.e.\ it is an $E_{n}$-operad, see~\cite[Proposition~3.9]{Salvatore2001}.
\end{rem}

Let us now consider the case of $M$ being a closed $n$-manifold.
The compactification $\FM_{M}(k)$ is again a manifold with corners, with interior $\Conf_{k}(M)$ (with no quotient), and the inclusion is a homotopy equivalence.
Elements of $\FM_{M}(k)$ can also be seen as configurations of $k$ points in $M$ where points can become infinitesimally close to each other.
When they do become infinitesimally close, we see them as defining an infinitesimal configuration in the tangent space of $M$ at their location.
If $M$ is framed, i.e.\ if we can coherently identify the tangent space at every point of $M$ with $\R^{n}$, then we can insert an infinitesimal configuration from $\FM_{n}$ into a configuration of $\FM_{M}$ and thus obtain the structure of a right operadic $\FM_{n}$-module on $\FM_{M}$.

\begin{figure}[!h]
  \centering
  \includegraphics[width=\textwidth]{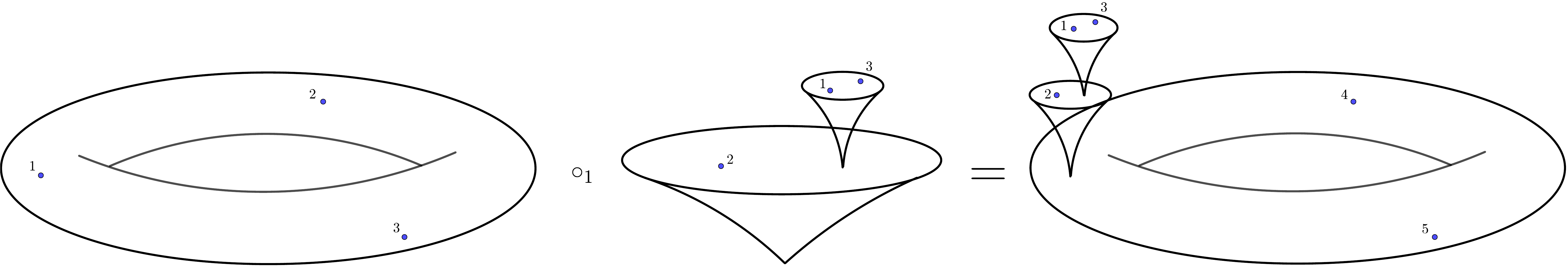}
  \caption{Illustration of the right $\FM_{n}$-module structure on $\FM_{M}$.}
\end{figure}

We can restrict the canonical projections $p_{i} : M^{k} \to M$ (for $1 \le i \le k$) to $\Conf_{k}(M)$, and then extend them to the compactification:
\begin{align}
  \label{equ:projections}
  p_{i} : \FM_{M}(k) & \to M, & 1 \le i \le k.
\end{align}

\subsection{Homotopy (co)operads and (co)modules, rational homotopy theory of operads} \label{sec:htpyoperads}
A basic technical problem in the rational (or real) homotopy theory of operads is that for a topological operad $\op T$ the (PL or smooth, if defined) differential forms $\Omega(\op T)$ do not form a cooperad.

This is due to the functor $\Omega : \Top \to \dgca^{\mathrm{op}}$ being oplax monoidal, but not lax monoidal, so that the cocomposition maps are encoded by a zigzag
\[
  \Omega(\op T(k+l-1))\to \Omega(\op T(k) \times \op T(l)) \xleftarrow{\sim} \Omega(\op T(k))\otimes \Omega(\op T(l)).
\]
However, there is no natural direct map from the left to the right as would be required for a cooperad.
One can use one of three workarounds for this problem: (i) use homotopy operads as in \cite{LambrechtsVolic2014,KhoroshkinWillwacher2017}; (ii) alter the functor $\Omega$ as in \cite{Fresse2017a,Fresse2017b}; or (iii) work with topological vector spaces and the projectively completed tensor product so that the right-hand arrow above becomes an isomorphism.
We will follow here the first approach, using an ``ad hoc'' notion of homotopy operad proposed in \cite{LambrechtsVolic2014}, see \cite{KhoroshkinWillwacher2017} for more details.

\begin{rem}\label{rem:htpy-forests}
  Although we expect that the homotopy theory of the notion of homotopy operads from~\cite{LambrechtsVolic2014,KhoroshkinWillwacher2017} can be worked out to be equivalent to the rational homotopy theory of~\cite{Fresse2017a,Fresse2017b}, this has not yet been done.
\end{rem}

Concretely, let $\fT$ be the category whose objects are forests of rooted trees from~\cite{KhoroshkinWillwacher2017}, and whose morphisms are generated by contracting edges of trees, by cutting edges and by tree isomorphisms.
The category $\fT$ is symmetric monoidal, the monoidal product being the disjoint union of forests.

\begin{align*}
  \begin{tikzpicture}
    \node at (0,0) {
      \begin{tikzpicture}
        \coordinate (v) at (0,0.5);
        \coordinate (w) at (0,-0.5);
        \draw (v) edge (w) edge +(-.5,-.5)  edge +(.5,-.5)  edge +(0,.5)
        (w)   edge +(-.5,-.5)  edge +(.5,-.5)  edge +(0,-.5);
      \end{tikzpicture}
    };
    \node at (1,-0.25) {$\to$};
    \node at (2,0) {
      \begin{tikzpicture}
        \coordinate (v) at (0,0);
        \draw (v)  edge +(-.5,-.5)  edge +(.5,-.5)  edge +(0,.5) edge +(-.25,-.5)  edge +(.25,-.5)  edge +(0,-.5);
      \end{tikzpicture}
    };
    \node at (5,0) {
      \begin{tikzpicture}
        \coordinate (v) at (0,0.5);
        \coordinate (w) at (0,-0.5);
        \draw (v) edge (w) edge +(-.5,-.5)  edge +(.5,-.5)  edge +(0,.5)
        (w)   edge +(-.5,-.5)  edge +(.5,-.5)  edge +(0,-.5);
      \end{tikzpicture}
    };
    \node at (6.2,-0.5) {$\longrightarrow$};
    \node at (6.15,0.3) {\resizebox{0.5cm}{!} {
        \begin{tikzpicture}
          \coordinate (v) at (0,0.5);
          \coordinate (w) at (0,-0.5);
          \draw (v) edge[dashed] (w) edge +(-.5,-.5)  edge +(.5,-.5)  edge +(0,.5)
          (w)   edge +(-.5,-.5)  edge +(.5,-.5)  edge +(0,-.5);
        \end{tikzpicture}}
    };
    \node at (7.3,-.2) {
      \begin{tikzpicture}
        \coordinate (v) at (0,0.5);
        \draw (v) edge +(0,-.5) edge +(-.5,-.5)  edge +(.5,-.5)  edge +(0,.5);
      \end{tikzpicture}
    };
    \node at (8.8,-.2) {
      \begin{tikzpicture}
        \coordinate (v) at (0,0.5);
        \draw (v) edge +(0,-.5) edge +(-.5,-.5)  edge +(.5,-.5)  edge +(0,.5);
      \end{tikzpicture}
    };
  \end{tikzpicture}
\end{align*}

We say that a (nonunital) homotopy operad in a symmetric monoidal category $\catC$ with weak equivalences is a symmetric monoidal functor
\[
  \op P : \fT \to \catC,
\]
such that all cutting morphisms are sent to weak equivalences.
Concretely, a (nonunital) homotopy operad consists of the data
\begin{itemize}
  \item For every tree $T$ an object $\op P(T)$, on which the automorphisms of $T$ act.
  \item For every edge contraction $T\to T'$ (a map of trees) we have a corresponding map $\op P(T)\to \op P(T')$.
  \item If $T$ is obtained by grafting $T_1$ and $T_2$ then we have a weak equivalence $\op P(T)\xrightarrow{\sim} \op P(T_1)\otimes \op P(T_2)$.
\end{itemize}
These data must satisfy natural compatibility conditions.
Any ordinary (nonunital) operad $\op P$ is also a homotopy operad, by setting
\[
  \op P(T) \coloneqq \otimes_T \op P \coloneqq \bigotimes_{v\in \text{vertices of }T} \op P (|v|)
\]
to be the tree-wise tensor product, the ``contraction'' morphism to agree with the operadic composition and the ``cutting'' maps are the isomorphisms
\[
  \op P(T) \cong \op P(T_1)\otimes \op P(T_2).
\]
There is also a variant for unital operads. One may define a category $\fT_1$ (see~\cite{KhoroshkinWillwacher2017}) having the same objects as $\fT$, but where in addition we have a generating morphism of creating a vertex having a single input and a single output on any edge.
In particular, there is a morphism from the empty tree to the $1$-corolla.
A unital homotopy operad is then a symmetric monoidal functor $\fT_1\to \catC$.

Dually we define a homotopy cooperad in $\op D$ as a contravariant symmetric monoidal functor
\[
  \op D : \fT \to \catC^{op}.
\]
The main example is as follows: Suppose $\op T$ is a topological operad. Then the (PL) forms $\Omega(\op T)$ form a homotopy cooperad in the category $\dgca$ of dg commutative algebras.
We will call such objects homotopy Hopf cooperads for short.
The corresponding functor
\[
  \Omega(\op T) : \fT \to \dgca
\]
is defined such that
\begin{equation}\label{eq:omegaishomotopycoop}
  \Omega(\op T) : T \mapsto \Omega(\times_T \op T).
\end{equation}

The contraction morphisms are the pullbacks of composition morphisms in $\op T$ and the ``cutting'' morphisms are the natural maps
\[
  \Omega(T_1)\otimes \Omega(T_2) \xrightarrow{\sim} \Omega(T).
\]

We will also work with the corresponding notion of homotopy operadic right modules.
Let $\fTm$ be a category whose objects are forests with one marked tree.
The morphisms are generated by edge contractions and edge cuts. Cutting an edge in the marked tree will leave the upper (closer to the root) subtree marked, and the other subtree unmarked.
The category $\fTm$ is naturally a monoidal category module over $\fT$.
Now suppose that
\[
  \op P: \fT\to \catC
\]
is a homotopy operad in the symmetric monoidal category $\catC$ (i.e., a symmetric monoidal functor), then a homotopy right operadic $\op P$-module $\op M$ is a functor
\[
  \op M : \fTm\to \catC
\]
so that the pair $(\op P, \op M)$ respects the given structure, and such that all cutting morphisms are sent to weak equivalences.
More precisely, $\op M$ is specified by the following data
\begin{enumerate}
  \item A collection of objects $\op M(T)$ for every (marked) tree $\op T$.
  \item Contraction morphisms $\op M(T)\to \op M(T')$.
  \item Cutting morphisms (weak equivalences) $\op M(T) \to \op M(T_1)\otimes \op P(T_2)$.
\end{enumerate}
Every operadic right module $\op M$ over an operad $\op P$ is in particular a homotopy operadic right module.

Dually, we define the notion of homotopy cooperadic right comodule.
In particular we will consider homotopy cooperadic right comodules in the category $\dgca$, which we call Hopf right comodules.
The main example will be as follows.
Let $\op T$ be again a topological operad, and $\op M$ a topological operadic right $\op T$-module.
Then the (PL) forms $\Omega(\op T)$ form a homotopy Hopf cooperad, as we saw. Furthermore the forms $\Omega(\op M)$, defined such that for a (marked tree)
\beq{equ:examplemoduletree}
T=
\begin{tikzpicture}[baseline=-4ex, scale=.5]
  \draw node {}
  child {
      child { node {$T_1$} }
      child { node {$\cdots$} }
      child { node {$T_r$} }
    };
\end{tikzpicture}
\eeq
we have
\[
  \Omega(\op M)(T) := \Omega\left( \op M(r) \times (\times_{T_1}\op T) \cdots (\times_{T_r} \op T )\right),
\]
naturally form a homotopy Hopf right comodule for $\Omega(\op T)$.

We will not fully develop the homotopy theory of homotopy (Hopf) (co)modules here.
We just say that we equip the category of homotopy right comodules with a structure of a homotopical (or $\infty$-)category by declaring the weak equivalences to be the morphisms that are object-wise weak equivalences.

For us, understanding the ``naive'' homotopy type of the topological operad $\op T$ acting on the topological operadic right module $\op M$ shall mean understanding the weak equivalence class (quasi-isomorphism type) of the pair consisting of the homotopy Hopf cooperad $\Omega(\op T)$ and its homotopy Hopf comodule $\Omega(\op M)$. For us a \emph{model} of $(\op T, \op M)$ shall be a pair consisting of a homotopy Hopf cooperad and a homotopy right comodule, such that the pair can be connected to $(\Omega(\op T),\Omega(\op M))$ by a zigzag of quasi-isomorphisms.

We finally remark that a ``proper'' rational homotopy theory of topological operads has been developed by B. Fresse \cite{Fresse2017a,Fresse2017b, Fresse2018}.
Concretely, he constructs a model category structure on (ordinary) Hopf cooperads, together with a Quillen adjunction with the category of topological operads.
Furthermore, he shows that morphisms of homotopy Hopf cooperads in our sense may be lifted to morphisms of ordinary dg Hopf cooperads in his framework, thus embedding our computations in a more satisfying homotopy theoretical framework.

\begin{rem}
  We want to emphasize that the notion ``homotopy operad'' is a bit of a misnomer, since homotopy operads are not objects in the homotopy category of operads. ``Lax operad'' could be a better name.
\end{rem}

\subsection{Formality of $\FM_{n}$}
\label{sec:graphical-models}

The little discs operads are known to be formal over $\Q$, i.e.\ their rational cohomology completely determines their rational homotopy type as operads~\cite{Kontsevich1999,Tamarkin2003,LambrechtsVolic2014,Petersen2014,FresseWillwacher2015}.
There are several methods to prove this result.
Here we recall the one pioneered by Kontsevich (which works over $\R$), based on graphical models, and that was recently applied to closed manifolds~\cite{CamposWillwacher2016,Idrissi2016} (see also Section~\ref{sec:graph-models-fm_m}) and compact manifolds with boundary~\cite{CamposIdrissiLambrechtsWillwacher2018} by some of the authors and Lambrechts.

For a topological operad $P$ of finite cohomological type, its cohomology $H(P)$
(e.g.\ over $\R$) is naturally a Hopf cooperad, i.e.\ a cooperad in the category of commutative differential graded algebras (here, with a trivial differential).
The forms on $P$ (for a suitable notion of ``forms'') $\Omega(P)$ are a homotopy Hopf cooperad. The formality of $\FM_{n}$ is then the statement that $H(\FM_{n})$ and $\Omega(\FM_{n})$ are quasi-isomorphic, i.e., they can be connected by a zigzag of quasi-isomorphisms of homotopy Hopf cooperads.

To set the notation, recall that the cohomology of $\FM_{n}(k) \simeq \Conf_{k}(\R^{n})$ is given by the following algebra, with generators $\omega_{ij}$ of degree $n-1$:
\begin{equation}
  \label{eq:5}
  H(\FM_{n}(k)) = S(\omega_{ij})_{1 \le i, j \le k} / \bigl( \omega_{ii}, \omega_{ij}^{2}, \omega_{ij} \omega_{jk} + \omega_{jk} \omega_{ki} + \omega_{ki} \omega_{ij}, \omega_{ji} - (-1)^n \omega_{ij} \bigr).
\end{equation}

Kontsevich \cite{Kontsevich1999} built a Hopf cooperad $\stG_{n}$ to connect $H(\FM_{n})$ with the forms on $\FM_{n}$ as follows.
Elements of $\stG_{n}(k)$ are linear combinations of graphs with two types of vertices: ``external'' vertices, numbered from $1$ to $k$, and an arbitrary number of ``internal'' vertices, undistinguishable and usually drawn in black.
The edges are formally directed, but an edge is identified with $(-1)^{n}$ times its opposite edge, so we will usually not draw the orientation.
The total degree of a graph is $(n-1)$ times the number of edges, minus $n$ times the number of internal vertices.
We mod out by graphs containing connected components with only internal vertices.
The differential of a graph is obtained as a sum over all possible ways of contracting an edge connected to an internal vertex.
The product glues graphs along external vertices, while the cooperad structure maps collapse subgraphs.
In order to have a well-defined coaction on our graph complex, we mod out by all graphs containing internal vertices of valence $\le 1$.

\begin{rem}\label{rem:tadpoles}
  To be consistent with what follows, note that we explicitly allow ``tadpoles'', i.e.\ edges between a vertex and itself, as well as multiple edges.
  This does not change the quasi-isomorphism type of the graph complex, see for instance the argument of \cite[Proposition 3.4]{Willwacher2014}.
  Mind however that for $n$ even graphs with double edges are automatically zero by symmetry, since the symmetry of switching the two edges is odd.
  Similarly, for $n$ odd graphs with tadpoles are zero since they have an odd symmetry by flipping the tadpole.
\end{rem}

\begin{figure}[!h]
  \centering
  \begin{tikzpicture}
    \node[ext] (v1) at (0,0) {$\scriptstyle 1$};
    \node[ext] (v2) at (1,0) {$\scriptstyle 2$};
    \node[ext] (v3) at (2,0) {$\scriptstyle 3$};
    \node[ext] (v4) at (3,0) {$\scriptstyle 4$};
    \node[int] (i1) at (0.5,1) {};
    \node[int] (i2) at (1.5,1) {};
    \node[int] (i3) at (2.5,1) {};
    \draw (i1) edge (i2) edge[bend left] (i3) edge (v1) edge (v2)
    (i2) edge (v2) edge (v3)
    (i3) edge (v3) edge (v4)
    (v1) edge (v2) edge[bend right] (v3);
  \end{tikzpicture}
  \caption{Illustration of an element in $\stG_{n}(4)$ with $3$ internal vertices.}
\end{figure}
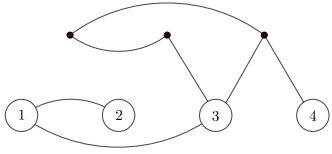

One may then define a first morphism $\stG_{n} \to H(\FM_{n})$ by sending an edge between $i$ and $j$ to $\omega_{ij}$ (in particular graphs with tadpoles are mapped to $0$), and any graph with internal vertices to zero.
For technical reasons, we need to work with piecewise semi-algebraic (PA) forms $\OmPA(\FM_{n})$, cf. \cite{HardtLambrechtsTurchinVolic2011}.
The second morphism $\omega : \stG_{n} \to \OmPA(\FM_{n})$ is defined using configuration space integrals.
First note that given two distinct points $i, j \leq r$ there are projections $p_{ij}\colon \FM_{n}(r) \to \FM_{n}(2)$.
Notice that  $\FM_{n}(2)$ is the sphere $S^{n-1}$, on which we have a standard volume form $\varphi \in \OmPA^{n-1}(\FM_{n}(2))$.
Given a graph $\Gamma \in \stG_{n}(k)$ with $l \ge 0$ internal vertices, let $E_{\Gamma}$ be its set of edges.
Setting $p_{ii}^*(\phi)=0$, we then define $\omega(\Gamma)$ to be the following integral along fibers (of the projection $\FM_{n}(k+l) \to \FM_{n}(k)$ which forgets all the points corresponding to internal vertices):
\begin{equation}
  \label{eq:6}
  \omega(\Gamma) \coloneqq \int_{\FM_{n}(k+l) \to \FM_{n}(k)} \bigwedge_{(i,j) \in E_{\Gamma}} p_{ij}^{*}(\varphi).
\end{equation}

\begin{rem}\label{rem:pa-forms}
  These integrals are the reason that we are forced to work with PA forms.
  Indeed, the projections $\FM_{n}(k+l) \to \FM_{n}(k)$ are not submersions \cite{LambrechtsVolic2014} in general, so we may not work with usual de Rham forms.
  However, they are semi-algebraic bundles.
\end{rem}

\begin{thm}[{\cite{Kontsevich1999,LambrechtsVolic2014}}]
  The morphisms defined above are quasi-iso\-morphisms of homotopy Hopf cooperads:
  \[ H(\FM_{n}) \xleftarrow{\sim} \stG_{n} \xrightarrow[\omega]{\sim} \OmPA(\FM_{n}). \]
\end{thm}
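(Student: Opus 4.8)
The plan is to verify that the two maps are quasi-isomorphisms of homotopy Hopf cooperads, splitting the task into two essentially independent parts: (i) checking that each of $\stG_n \to H^*(\FM_n)$ and $\omega\colon \stG_n \to \OmPA^*(\FM_n)$ is a morphism of homotopy Hopf cooperads, i.e.\ that they commute with differentials, products, and the cocomposition/cutting structure maps encoded by the functor out of $\fT$; and (ii) checking that each is a quasi-isomorphism on the level of underlying complexes, i.e.\ for each fixed arity $k$.

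For the algebraic map $\stG_n \to H^*(\FM_n)$, compatibility with the product is immediate (gluing along external vertices corresponds to the shuffle product on $H^*$), and compatibility with the differential holds because the target has zero differential while any graph with an internal vertex maps to zero: one checks that contracting an edge incident to an internal vertex either produces again a graph with an internal vertex, or a graph whose image vanishes by the Arnold relation $\omega_{ij}\omega_{jk}+\omega_{jk}\omega_{ki}+\omega_{ki}\omega_{ij}=0$ and $\omega_{ij}^2=0$. Compatibility with the cooperadic structure is the statement that subgraph collapse matches the Künneth decomposition of $H^*(\FM_n)$ along operadic composition, which is a direct combinatorial check on generators $\omega_{ij}$. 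That this map is a quasi-isomorphism in each arity is a theorem of Kontsevich/Lambrechts--Vol\'{\i}c: one filters $\stG_n(k)$ by the number of internal vertices (or by loop order), identifies the associated graded with a Chevalley--Eilenberg-type or Harrison-type complex computing $H^*(\FM_n(k))$, and uses that the "internal-vertex part" is acyclic; concretely one reduces to graphs with no internal vertices, whose span with the induced differential is exactly the quadratic algebra \eqref{eq:5}.

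For the integration map $\omega\colon \stG_n \to \OmPA^*(\FM_n)$, the key inputs are the standard properties of integration along fibers of semi-algebraic bundles from \cite{HardtLambrechtsTurchinVolic2011}: Stokes' formula $d\int_{\pi} = \int_\pi d \pm \int_{\partial\pi}$, the projection/Fubini formulas for pushforward along a composition of bundles, and compatibility with pullback. Compatibility of $\omega$ with the differential then follows from Stokes: the $\int_\pi d$ term produces nothing since $\varphi$ and the decoration forms are closed, and the boundary term $\int_{\partial\pi}$ decomposes over the boundary strata of $\FM_n(k+l)$; the strata where exactly two points (at least one internal) collide reproduce the edge-contraction differential, the strata where a point escapes to infinity or a larger cluster collapses contribute zero by a degree/dimension count or by the "vanishing lemmas" (the integral of a product of propagators over a configuration space vanishes when there are too few forms, Kontsevich's lemma for the double-edge and tadpole vanishing), and the "dead end" (univalent internal vertex) strata are precisely the ones excluded from the combinatorial differential. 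Multiplicativity of $\omega$ is the Fubini formula applied to the product bundle $\FM_n(k+l)\times\FM_n(k+l') \to \FM_n(k)\times\FM_n(k')$ together with the fact that on external vertices the fiber product splits. Compatibility with the homotopy cooperad structure amounts to checking that $\omega$ intertwines subgraph collapse with the cocomposition zigzag of $\Omega$; this uses that on a boundary stratum of $\FM_n$ corresponding to an operadic insertion the relevant propagators factor as a pullback from the two factors, so the fiber integral factors accordingly up to the canonical comparison quasi-isomorphism $\Omega(\op T_1)\otimes\Omega(\op T_2)\xrightarrow{\sim}\Omega(\op T_1\times\op T_2)$. Finally, that $\omega$ is a quasi-isomorphism in each arity follows formally once one knows the other map is: one has a commuting triangle $H^*(\FM_n(k)) \xleftarrow{\sim} \stG_n(k) \xrightarrow{\omega} \OmPA^*(\FM_n(k))$ in which the left map and the composite $\stG_n(k)\to\OmPA^*(\FM_n(k))\to H^*(\FM_n(k))$ (sending an edge to the cohomology class of $\varphi$) are both known quasi-isomorphisms, hence $\omega$ is too by two-out-of-three. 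Alternatively, and more robustly, one runs the spectral sequence of the internal-vertex filtration on $\stG_n(k)$ and matches it term by term with the corresponding filtration on forms, using the vanishing lemmas to kill everything but the no-internal-vertex part.

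The main obstacle is the verification that $\omega$ is a chain map, specifically the detailed analysis of the boundary strata of $\FM_n(k+l)$ and the proof that all "unwanted" boundary contributions vanish. This is where Kontsevich's vanishing lemmas enter (vanishing of integrals over $\FM_n(3)$ and $\FM_n(4)$ for the relevant forms, vanishing for graphs with a "short" subgraph collapsing, vanishing for tadpoles and for configurations escaping to infinity), and making these rigorous over $\R$ with PA forms — rather than heuristically over $\Q$ — is precisely the technical content supplied by \cite{LambrechtsVolic2014}. Since the statement explicitly cites both \cite{Kontsevich1999} and \cite{LambrechtsVolic2014}, I would not reprove these lemmas but instead cite them, and spend the bulk of the argument on assembling the pieces: the three structure-map compatibilities and the two-out-of-three quasi-isomorphism argument.
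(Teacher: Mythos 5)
The paper gives no proof of this theorem---it is recalled from \cite{Kontsevich1999,LambrechtsVolic2014}---and your outline reproduces the standard argument of those references (Stokes' formula plus the boundary-stratum analysis and vanishing lemmas for the integration map $\omega$, and the internal-vertex filtration together with a comparison of cohomologies for the algebraic map), so it is essentially the same approach, with the hard vanishing lemmas correctly deferred to the cited sources. Two minor imprecisions, neither of which affects the structure of the argument: the fiberwise boundary of $\FM_{n}(k+l)\to\FM_{n}(k)$ has no ``point escaping to infinity'' strata, since translations and rescalings are already quotiented out, and the two-out-of-three step should be phrased on cohomology (the endomorphism of $H^{*}(\FM_{n}(k))$ induced by $\omega$ and the known isomorphism fixes the generators $\omega_{ij}$, hence is the identity) because there is no direct dgca map $\OmPA^{*}(\FM_{n}(k))\to H^{*}(\FM_{n}(k))$.
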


\subsection{The Kontsevich graph complex $\GC_n$}
\label{sec:gcn}

Let us also recall the definition of the graph complex $\GC_{n}$.
As a vector space, $\GC_{n}$ is spanned by infinite series of connected 1-vertex irreducible graphs (i.e. graphs that remain connected after deleting one vertex) consisting of internal vertices only, each having valence $\geq 2$.
Edges are directed, but the elements of $\GC_{n}$ must be invariant under edge reversal, with a coefficient $(-1)^{n}$ when an edge is reversed.
Thus we draw undirected edges in pictures, which are to be understood as the sum of an edge with its symmetric (with a sign).
\[
  \begin{tikzpicture}
    \node[int] (v1) at (0:1) {};
    \node[int] (v2) at (90:1) {};
    \node[int] (v3) at (180:1) {};
    \node[int] (v4) at (270:1) {};
    \draw (v1) edge (v2) edge (v3) edge (v4) (v2) edge (v3) edge (v4) (v3) edge (v4);
  \end{tikzpicture}
\]
Given a graph $\gamma \in \GC_{n}$ with $e$ edges and $v$ vertices, its cohomological degree is $vn -n-e(n-1)$, and minus that degree if one prefers homological conventions.
The differential is dual of the differential in $\Graphs_{n}$ and splits vertices in two vertices connected by an edge, summing over all possible ways of reconnecting edges incident to the initial vertex to the two vertices.

There is a (pre-)Lie algebra structure on $\GC_{n}$ given by insertion of graphs, denoted $\Gamma \star \Gamma'$.
This Lie algebra $\GC_{n}$ acts on each
$\Graphs_{n}(k)$ by Hopf cooperadic biderivations, again using insertion of graphs.
A more conceptual way of obtaining this structure is to realize that $\GC_{n}$ can be equivalently given as the deformation Lie algebra of a map from the $\Lie$ operad into an operad of graphs $\Gra_n^*$, which is essentially the dual of $\Graphs_n$, but with no differential or internal vertices.
  { The differential on $\Graphs_n$ arises from a twist by the Maurer--Cartan element}
\begin{tikzpicture}[every pin edge/.style={densely dotted}, every pin/.style={font=\footnotesize}, scale=1.5, baseline=-.1cm]
  \node[int] (i1) at (0,0) {};
  \node[int] (i2) at (0.3,0) {};
  \draw
  (i1) edge (i2)
  ;
\end{tikzpicture}.
Concretely, given $\mu\in \GC_n$ and $\Gamma $ a graph in the dual operad $\Gamma \in \Graphs_n^*$, the action is given by summing over (with appropriate signs) plugging $\mu$ into the vertices of $\Gamma$ and subtracting the insertion of $\Gamma$ into the vertices of $\mu$.
That is, if we denote $\mu_1$ the graph $\mu$ but with one of its vertices made external, then we have
\begin{equation}\label{eq:action of GCn on Graphs_n}
  \mu(\Gamma)=\mu_1\circ\Gamma-(-1)^{|\Gamma||\mu|}\Gamma\circ\mu_1-(-1)^{|\Gamma||\mu|}\Gamma\star\mu.
\end{equation}

We refer to~\cite[Appendix~I.3]{Willwacher2014} or~\cite[Equation~(7)]{Idrissi2016} for an explicit description of this action and~\cite[Proposition~3.2]{DolgushevWillwacher2015} for the deformation complex point of view.
Furthermore, $\GC_n$ can essentially be identified with the homotopy biderivations, and can be used to compute the homotopy automorphism space of the rationalizations of the little discs operads, see \cite{FresseWillwacher2020}.

\begin{rem}\label{rmk:val-gcn}
  It is possible to remove from $\Graphs_n$ and from $\GC_n$ the graphs with internal vertices of valence $2$. In fact, elsewhere in the literature, including the last author's papers, the notation $\Graphs_n$ and $\GC_n$ refers to the version without vertices of valence $2$, while our version is denoted $\Graphs_n^2$ or $\GC_n^2$ respectively.
  Since we only need the version above for this paper we omitted the superscript for the sake of cleaner notation.
\end{rem}

\subsection{Graphical models for $\FM_M$}
\label{sec:graph-models-fm_m}

The methods described in Section~\ref{sec:graphical-models} to build real models for $\Conf_{k}(\R^{n})$ were enhanced by some of the authors to describe real models for $\Conf_{k}(M)$ when $M$ is a closed orientable manifold~\cite{CamposWillwacher2016,Idrissi2016}.
We give here a quick account of the model found in the first reference.

The goal is to build a sequence of dgcas $\stG_{M}(k)$, equipped with an operadic right $\stG_{n}$-comodule structure when $M$ is framed.
Just like $\stG_{n}$, the space $\stG_{M}(k)$ is spanned by graphs with two types of vertices: external vertices, numbered $1, \dots, k$, and indistinguishable internal vertices of degree $-n$.
The edges are again undirected and of degree $n-1$.
Each vertex is decorated by zero, one, or more elements of the reduced cohomology $\tilde{H}(M)$, in other words, by an element of the free unital symmetric algebra $S(\tilde{H}(M))$, and each decoration increases the total degree of the graph.
Implicitly, we identify this algebra with a quotient of $S(H(M))$, where we identify the unit of the free algebra with the unit of $H(M)$.

Just like in $\Graphs_{n}$, tadpoles and double edges are allowed in $\stG_{M}$.
Also as before, we identify graphs with zero if they contain a univalent internal vertex, by which we mean an internal vertex with exactly one incident edge and no $\tilde{H}(M)$-decoration.
Furthermore, if the graph contains a connected component containing internal vertices only, that component is removed and replaced by a numeric prefactor, see below. So effectively, our graphs should be considered as not containing such connected components of internal vertices.

\begin{figure}[!h]
  \centering
  \begin{tikzpicture}[every pin edge/.style={densely dotted}, every pin/.style={font=\footnotesize}, scale=1.5]
    \node[ext] (e1) at (0,0) {1};
    \node[ext] (e2) at (1,0) {2};
    \node[ext] (e3) at (2,0) {3};
    \node[ext, pin={above right}:{$\omega_1$}] (e4) at (3,0) {4};
    \node[int, pin={above left}:{$\omega_2$}, pin={below left}:{$\omega_3$}] (i1) at (.5,1) {};
    \node[int] (i2) at (1.5,1) {};
    \node[int] (i3) at (2.5,1) {};
    \draw
    (e1) edge[bend right] (e2) edge[bend left] (e3)
    (e3) edge (i2) edge (i3)
    (i1) edge[bend right] (i2) edge[bend left] (i3)
    (i3) edge (e4)
    ;
  \end{tikzpicture}
  \caption{Illustration of an element in $\stG_{M}(4)$.}\vspace{-5pt}
\end{figure}

The differential $\delta$ is a sum of two parts, $\delta_{\mathrm{contr}}-\delta_{\mathrm{cut}}$:
\begin{itemize}
  \item The contracting part $\delta_{\mathrm{contr}}$ is the sum of all possible ways of contracting edges connected to an internal vertex, multiplying the decorations (in the free symmetric algebra).
        That summand also contracts tadpoles, which has the effect replacing a tadpole by a decoration of the incident vertex by the Euler class.

  \item The cutting part $\delta_{\mathrm{cut}}$ is the sum over all possible ways of cutting an edge and multiplying the endpoints of the edge by the diagonal class $\Delta_{M} \in H(M)^{\otimes 2}$ (we interpret a decoration by the unit $1\in H(M)$ as no decoration).
        Recall that given  a graded basis $\{ e_{i} \}$ of $H(M)$ the diagonal class is expressed as follows: if  $\{ e_{i}^{\vee} \}$ is the dual basis with respect to the Poincaré duality pairing (i.e.\ $\int_{M} e_{i} e_{j}^{\vee} = \delta_{ij}$) then
        \begin{equation}
          \label{eq:4}
          \Delta_{M} \coloneqq \sum_{i} (-1)^{|e_{i}|} e_{i} \otimes e_{i}^{\vee}.
        \end{equation}
\end{itemize}

Similar to $\Graphs_n$ and Equation \ref{eq:action of GCn on Graphs_n}, another way to obtain this differential is to consider the action of $\GC_n$ on $\Graphs_M^*$ by:

\[\mu(\Gamma) = -(-1)^{|\Gamma||\mu|}(\Gamma\circ\mu_1+\Gamma\star\mu).\]
Notice that the term $\mu_1\circ\Gamma$ in  Equation \ref{eq:action of GCn on Graphs_n} is not present here, since $\GC_n$ only acts on the right on $\Graphs_M^*$.

Following \cite[Definition 20]{CamposWillwacher2016}, we can consider the Lie algebra $\GC_M$, which is to $\Graphs_M$ as $\GC_n$ is to $\Graphs_n$ (note that since $\Graphs_M$ allows tadpoles, so must $\GC_M$).
There is a ``partition function'' $Z_M\in\GC_M$, which we interpret as a map from the pre-dual space $Z_{M}\colon {}^*\GC_M \to \mathbb R$ which assigns a real number to graphs with only internal vertices.
For example, if $\gamma$ is a graph with exactly one vertex and decorations $\alpha_{1}, \dots, \alpha_{k} \in \tilde{H}(M)$, then $Z_{M}(\gamma) = \int_{M} \alpha_{1} \wedge \dots \wedge \alpha_{k}$.
Then in the definition of $\stG_{M}$, a graph $\Gamma$ with a connected component $\gamma$ with only internal vertices is identified with $Z_{M}(\gamma) \cdot (\Gamma \setminus \gamma)$.

\begin{ex}\label{ex:partition}
  This partition function is, in general, difficult to compute, and few concrete examples are known.
  Generally, the tree part of the partition function encodes the real homotopy type of $M$, in the form of a (cyclic) homotopy commutative algebra structure on $H(M)$, see~\cite[Section~8]{CamposWillwacher2016}.
  This tree part can hence be computed relatively easily in many examples.
  Let us call the partition function ``trivial'' if it vanishes on all graphs of positive loop order.
  By Koszul duality, the differential $\delta$ on the free algebra generated by $\GC_M^*$ induces a dg-Lie algebra structure on $\GC_M$, and $Z_M$ can be seen as a Maurer--Cartan element in that dg-Lie algebra, see~\cite[Section~7.1]{CamposWillwacher2016}.
  It is known that $Z_{M}$ is gauge equivalent to a trivial one if $H^1(M)=0$ and of dimension $\ge 4$ by a simple degree counting argument, see~\cite[Lemma~54]{CamposWillwacher2016} or~\cite[Proposition~45]{Idrissi2016}, or if $M$ is a $2$-sphere by~\cite[Appendix~B]{CamposWillwacher2016}.
  Using different techniques, a result of similar nature was obtained for oriented surfaces after the submission of the first version of the present paper in~\cite{CamposIdrissiWillwacher2019}.

\end{ex}

Moreover, if $M$ is framed, or more generally if the Euler class of $M$ vanishes, then there is an operadic right $\stG_{n}$-comodule structure on $\stG_{M}$, given by subgraph collapsing (multiplying all the decorations of the collapsed subgraph in the process).

To define the quasi-isomorphism $\stG_{M} \to \OmPA(\FM_{M})$, one first chooses representatives of the cohomology of $M$ via an injective quasi-isomorphism of chain complexes $\iota : H(M) \to \OmPA(M)$.
We will generally suppress it from the notation, viewing $H(M)$ as a subcomplex of $\OmPA(M)$.
Then there exists a ``propagator'' \cite[Proposition~8]{CamposWillwacher2016}, a form $\varphi \in \Omega^{n-1}_{min}(\FM_{M}(2))$, which satisfies the following properties:
\begin{itemize}
  \item it is (anti-)symmetric, i.e.\ $\varphi^{21} = (-1)^{n} \varphi$;
  \item its differential $d\varphi$ is minus the pullback of $\Delta_{M}$ under the canonical projection $\FM_{M}(2) \to M^{2}$;
  \item its restriction to $\partial \FM_{M}(2)$, which is a sphere bundle over $M$, is a global angular form (i.e.\ its integral on every fiber is $1$);
        furthermore, when $n=2$, the restriction to each fiber of the circle bundle $\partial \FM_M(2) \to M$ must be a round volume form so that Kontsevich's argument below still works (see \cite[Proposition~8]{CamposWillwacher2016});

  \item for all $\alpha \in H(M)$, one has $\int_{y} \varphi(x,y) \alpha(y) = 0$.
\end{itemize}

This version of $\Graphs_M$, unlike the one in \cite{CamposWillwacher2016}, has tadpoles that we must handle specifically.
We choose a fixed form $\eta \in \Omega^{n-1}(M)$ satisfying $d\eta = -\sum_i (-1)^{\deg e_i} \iota(e_i)\wedge \iota(e_i^\vee) + \iota(E)$, where $\iota(E)$ is the chosen representative of the Euler class and the sum runs over a graded basis of $H(M)$, where $e_i^\vee$ denotes the dual basis to $e_i$ with respect to Poincaré duality.
Notice that if $n$ is odd $\iota(E)$ and $\eta$ can be chosen to be zero.

Then given a graph $\Gamma \in \stG_{M}(k)$ with $l$ internal vertices, $E_{\Gamma}$  its set of edges and $V_{\Gamma}$ its set of vertices; for a vertex $i \in V_{\Gamma}$, let $\alpha_{1}^{i}, \dots, \alpha_{r_{i}}^{i}$ be its decorations.
We define its image in $\OmPA(\FM_{M}(k))$ to be the following integral along fibers:
\begin{equation}
  \label{eq:7}
  \omega(\Gamma) \coloneqq \int_{\FM_{M}(k+l) \to \FM_{M}(k)} \bigwedge_{(i,j) \in E_{\Gamma}, i \neq j} p_{ij}^{*}(\varphi) \wedge \bigwedge_{(i, i) \in E_{\Gamma}} p_{i}^{*}(\eta) \wedge \bigwedge_{i \in V_{\Gamma}} (\iota(\alpha_{1}^{i}) \wedge \dots \wedge \iota(\alpha_{r_{i}}^{i})).
\end{equation}
Note that in general the pushforward is not defined for all PA forms but only for a subclass of \emph{trivial forms} and therefore the forms appearing in the integral must be chosen to be trivial \cite[Appendix C]{CamposWillwacher2016}.

\begin{thm}[{\cite[Theorems 42 and 25]{CamposWillwacher2016}}]
  The morphism described above defines a quasi-isomorphism of dgcas:
  \begin{equation*}
    \omega : \stG_{M}(k) \xrightarrow{\sim} \OmPA(\FM_{M}(k)).
  \end{equation*}
  If moreover $M$ is framed then (for a suitable choice of propagator, see \cite[Remark 9]{CamposWillwacher2016}) this is compatible with the operadic comodule structure, respectively over $\stG_{n}$ and $\OmPA(\FM_{n})$.
\end{thm}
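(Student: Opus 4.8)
The plan is to adapt Kontsevich's proof of the formality of $\FM_n$, with the two features specific to a closed manifold --- the cohomology decorations and the partition function $Z_M$ --- handled by a careful analysis of boundary strata. First I would check that \eqref{eq:7} defines an honest PA form, which is immediate since the projections $\FM_M(k+l)\to\FM_M(k)$ are semialgebraic bundles, and that $\omega$ is multiplicative: gluing two graphs along external vertices corresponds under $\omega$ to the wedge product, by Fubini over the fiber product $\FM_M(k+l_1)\times_{\FM_M(k)}\FM_M(k+l_2)$, the degrees matching because integrating out $l$ internal vertices lowers form degree by $nl$. The substance of the first half is that $\omega$ is a chain map. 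Applying the Stokes formula for fiberwise integration of PA forms, $d\omega(\Gamma)$ is the sum of a bulk term (from differentiating the integrand) and a boundary term (integration over the fiberwise boundary). Since $d\varphi$ is the pullback of $\Delta_M$ from $M^2$ (cf.\ \eqref{eq:4}), the bulk term reproduces $\omega(\delta_{\mathrm{cut}}\Gamma)$, up to contributions where an edge is cut into a single vertex (tadpoles, loops at an internal vertex) which vanish by the propagator property $\int_y\varphi(x,y)\alpha(x)=0$ and by (anti)symmetry of $\varphi$. The boundary term is treated by stratifying the fiberwise boundary of $\FM_M(k+l)\to\FM_M(k)$: the \emph{principal} faces, where a single internal vertex collides with an external vertex or with one other internal vertex, contribute exactly $\omega(\delta_{\mathrm{contr}}\Gamma)$.

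The crux is the vanishing of all the \emph{hidden} faces. Faces in which three or more points collide vanish by Kontsevich's vanishing lemma: a degree count on the space of infinitesimal configurations, using the (anti)symmetry of $\varphi$ and the fact that a standard volume form on $\FM_n$ of too low degree integrates to zero over the relevant sphere. Faces in which a connected subgraph built only from internal vertices collapses to a point contribute the integral over the corresponding infinitesimal configuration --- which is precisely $Z_M$ of that subgraph --- times the complementary graph; the relation $\Gamma\sim Z_M(\gamma)\,(\Gamma\setminus\gamma)$ that we quotiented by is designed exactly so that this contribution already vanishes in $\stG_M$. Faces carrying a low-valence internal vertex with decorations are killed once more by $\int_y\varphi(x,y)\alpha(x)=0$. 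Collecting everything gives $d\omega(\Gamma)=\omega(\delta_{\mathrm{contr}}\Gamma+\delta_{\mathrm{cut}}\Gamma)$, so $\omega$ is a morphism of dgcas.

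For the quasi-isomorphism I would proceed in three moves. (i) Reduce to the quasi-isomorphic subcomplex $\stG_M^{\ge3}(k)$ of graphs whose internal vertices all have valence $\ge 3$, via the standard explicit homotopies contracting univalent and bivalent internal vertices. (ii) On $\stG_M^{\ge3}(k)$ filter by $\#(\text{edges})-\#(\text{internal vertices})$; the valence condition makes this bounded below and the filtration exhaustive, $\delta_{\mathrm{contr}}$ preserves the filtration degree while $\delta_{\mathrm{cut}}$ strictly lowers it, so the associated graded differential is $\delta_{\mathrm{contr}}$. The main work is to show that $(\stG_M^{\ge3}(k),\delta_{\mathrm{contr}})$ has cohomology concentrated on the graphs with no internal vertices --- an internal-vertex acyclicity statement in the spirit of Kontsevich's argument, with the $Z_M$-evaluation of closed internal components being exactly what makes it consistent. (iii) The subspace of graphs with no internal vertices is the Lambrechts--Stanley-type commutative algebra $G_{H^*(M)}(k)=\bigl(H^*(M)^{\otimes k}\otimes S(\omega_{ij})\bigr)/(\text{Arnold and symmetry relations})$ equipped with the differential $\omega_{ij}\mapsto\Delta_{ij}$, and by (ii) the inclusion $G_{H^*(M)}(k)\hookrightarrow\stG_M(k)$ is a quasi-isomorphism. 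Since the restriction of $\omega$ to $G_{H^*(M)}(k)$ is the classical comparison map (sending $\omega_{ij}$ to $p_{ij}^*\varphi$ and a decoration to the corresponding pulled-back form), which is known to be a quasi-isomorphism onto $\OmPA(\FM_M(k))$ by the methods of Kontsevich and Lambrechts--Stanley, it follows that $\omega$ itself is a quasi-isomorphism.

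Finally, for the comodule compatibility when $M$ is framed: the $\stG_n$-coaction on $\stG_M$ collapses a subgraph, whereas $\OmPA(\FM_M)$ is a homotopy comodule over $\OmPA(\FM_n)$ by pullback along the operadic insertion $\FM_M\circ\FM_n\to\FM_M$, and ``compatible'' is to be read up to the zigzag defining these homotopy comodule structures. I would compare them by computing the pullback of $\omega(\Gamma)$ along an insertion: near the inserted infinitesimal cluster the relevant boundary of $\FM_M(2)$ is a sphere bundle over $M$, and by the ``global angular form'' property of the propagator --- using the framing to identify every $T_xM$ with $\R^n$ --- the restriction of $\varphi$ there is the pullback of the standard $\FM_n$-propagator. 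Hence the fiber integral factorizes, the part over the vertices inside the cluster producing the $\FM_n$-Feynman rules applied to the collapsed subgraph and the rest producing $\omega_M$ of the remaining graph; this is exactly compatibility of $\omega$ with the two coactions. The role of the framing, or more generally of the vanishing Euler class, is precisely to make such a globally defined propagator, standard near the diagonal, exist --- the same hypothesis under which the coaction on $\stG_M$ is defined in the first place. The hard part of the whole argument is the hidden-face analysis: proving that the only surviving anomaly is the collapse of a closed internal subgraph, and that it is cancelled by the $Z_M$-quotient; the acyclicity statement in step (ii) of the quasi-isomorphism is the other genuinely technical point, everything else being bookkeeping with signs, degrees, and boundary combinatorics.
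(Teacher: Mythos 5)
This theorem is not proved in the present paper; it is recalled from \cite{CamposWillwacher2016}, so your proposal has to be measured against the proof given there. Your first half does match that proof: multiplicativity via Fubini for fiberwise integrals, the chain-map property via the fiberwise Stokes formula, principal boundary faces producing $\delta_{\mathrm{contr}}$, the bulk term $d\varphi=\pi^*\Delta_M$ producing $\delta_{\mathrm{cut}}$, Kontsevich-type vanishing of hidden faces, the cancellation of collapsing internal components against the $Z_M$-quotient, and the comodule compatibility via a propagator adapted to the framing near the boundary sphere bundle. This is the standard argument and is essentially what the cited proof does.

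The quasi-isomorphism part, however, has a genuine gap. Steps (ii)--(iii) would prove that a Lambrechts--Stanley-type algebra built from the cohomology ring $H^{*}(M)$ alone is a real model of $\Conf_k(M)$ for \emph{every} closed oriented manifold, which cannot be true in the stated generality: the theorem carries no formality or simple-connectivity hypothesis, and already the real homotopy type of $M$ itself is not determined by $H^{*}(M)$. Concretely, the internal-vertex-free graphs of $\stG_M(k)$ do not form the algebra $G_{H^{*}(M)}(k)$: tadpoles and multiple edges are allowed, the Arnold and $\omega_{ij}^2=0$ relations do not hold at the chain level, and the decorations at a vertex lie in the free symmetric algebra $S(\tilde{H}^{*}(M))$ rather than in $H^{*}(M)$ --- the graph with an external vertex decorated by $\alpha,\beta$ is only \emph{cohomologous} to the one decorated by $\alpha\cup\beta$, the homotopy passing through a graph with an internal vertex and using the $Z_M$-identification, so internal vertices are exactly what enforces these relations and there is not even an evident dgca map $G_{H^{*}(M)}(k)\to\stG_M(k)$. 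Accordingly, the concentration claim in (ii) (cohomology of the $\delta_{\mathrm{contr}}$-associated graded sitting on vertex-free graphs) fails in the decorated setting; likewise the ``standard homotopies'' removing low-valence internal vertices in (i) are not available once internal vertices carry decorations. This is precisely where the non-formality of $M$, encoded by decorated internal vertices and the partition function, survives, and your proposal discards it. The proof in \cite{CamposWillwacher2016} avoids this reduction altogether: the quasi-isomorphism is established by induction on the number of points, comparing the Fadell--Neuwirth fibration $\FM_M(k)\to\FM_M(k-1)$ with a corresponding decomposition of $\stG_M(k)$ over $\stG_M(k-1)$ whose ``fiber'' part is a graphical model of a punctured manifold. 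The statement that a cohomology-only (Lambrechts--Stanley) model suffices is a separate theorem, proved in \cite{Idrissi2016} under simple connectivity and with a Poincar\'e duality model in place of $H^{*}(M)$; it is not an ingredient of, nor a shortcut to, the theorem you were asked to prove.
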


In particular, note that
\begin{equation}
  \label{eq:8}
  A_{\mathrm{full}} \coloneqq \stG_{M}(1) \xrightarrow{\sim} \OmPA(M)
\end{equation}
is a real model for $M$
(if $M$ is not simply connected then this is a ``naive'' model, and we potentially need more information to recover the full real homotopy type of $M$).
For our purposes, we actually have to consider the ``tree part'' of $A_{\mathrm{full}}$, i.e.\ the sub-dgca $A$ of graphs with no loops, which is also a model for $M$~\cite[Lemma 52]{CamposWillwacher2016}:
\begin{equation}
  \label{eq:3}
  A \coloneqq \stG_{M}^{\mathrm{tree}}(1) \xrightarrow{\sim} \OmPA(M).
\end{equation}

Moreover, we have maps:
\begin{equation}
  \label{eq:9}
  A^{\otimes k} \to \stG_{M}(k),
\end{equation}
obtained by gluing the graphs at each external vertex, which represent the projections of Equation~\eqref{equ:projections}.

If $\dim M$ is even, then we have a canonical representative $E \in A$ of the Euler class of $M$.
Recall the graded basis $\{e_{i}\}$ and dual basis $\{e_{i}^{\vee}\}$ of $H(M)$.
Then our representative of the Euler class is given by a sum of graphs with two decorations:
\begin{equation}
  \label{eq:euler-class}
  E \coloneqq \sum_{i}
  (-1)^{\deg e_{i}}\
  \begin{tikzpicture}[baseline=(i.base), every pin edge/.style={densely dotted}, every pin/.style={font=\tiny}, pin distance=2mm]
    \node[ext, pin={above left}:{$e_i$}, pin={above right}:{$e_i^\vee$}] (i) {1};
  \end{tikzpicture}
\end{equation}
If $\dim M$ is odd then we merely set $E \coloneqq 0$ for notational consistency later.

\begin{rem}
  When $M$ is simply connected, it would be possible to replace $S(\tilde{H}(M))$ by a Poincaré duality model $A$ of $M$, as is done in~\cite{Idrissi2016}.
  However, this would add some technical complications due to the fact that there is no direct map $A \to \OmPA(M)$ in general, so we will not go down this path.
  Most of the constructions below would work similarly.
\end{rem}

\begin{rem}
In the above we adapt the sign conventions of \cite{LambrechtsVolic2014}, deviating slightly from \cite{CamposWillwacher2016}.
In particular, we ask our propagator $\phi$ to restrict to a unit volume form on the sphere bundle, for the standard orientation of the sphere bundle. This is opposite to the orientation it receives from being a boundary of $\FM_M(2)$.
This changes the sign in the differential of $\phi$. Note that the sign from the following computation with Stokes' formula, where the integral is over the fiber of $\FM_M(2)\to\FM_M(1)= M$,
\[
-1= \int_{\partial}\phi(z,w) = \int_{w}d\phi(z,w)
= -\sum_i (-1)^{|e_i|} \int_{w} e_i(z) \otimes e_i^\vee(w)
=-1.
\]
\end{rem}

\subsection{Equivariant graphical models}
\label{sec:equiv-graph-models}
Throughout the paper we will abbreviate $G = \SO(n)$ to shorten notation.
There is an action of $G$ on $\FM_{n}$ induced by the canonical action of $G$ on $\mathbb R^n$.
The framed Fulton--MacPherson operad $\FFM_{n}$ is then obtained as the ``framing product'' or ``semidirect product''~\cite{SalvatoreWahl2003} of $\FM_{n}$ with $G$:
\begin{equation}
  \FFM_{n} \coloneqq \FM_{n} \circ G = \{ \FM_{n}(k) \times G^{k} \}_{k \ge 0}.
\end{equation}

The action of the group $G = \SO(n)$ on $\FM_{n}$ is not directly apparent on the model $\Graphs_{n}$.
Let us now describe it.
Consider the abelian Lie algebra
\begin{equation}\label{eq:lie-alg-so}
  \mathfrak{g} = \bigoplus_{i \ge 0} \pi_{i}(\SO(n)) \otimes_{\Z} \R
  =\begin{cases}
    \R P_{3} \oplus \R P_{7} \oplus \dots \oplus \R P_{2n-7} \oplus \R \chi_{n-1}, & \text{if $n$ is even}; \\
    \R P_{3} \oplus \R P_{7} \oplus \dots \R P_{2n-7} \oplus \R P_{2n-3},          & \text{if $n$ is odd};
  \end{cases}
\end{equation}
where the elements $P_{i}$, living in degree $i$, correspond to the Pontryagin classes and $\chi_{n-1}$, living in degree $n-1$, corresponds to the Euler class.
The action of $G$ on $\FM_n$ may be described by an $L_\infty$-action of $\alg g$ on $\Graphs_n$.
This $L_\infty$ action has been identified in \cite[Theorem 7.1]{KhoroshkinWillwacher2017}, and factors through the action of the graph complex $\GC_{n}$ from Section~\ref{sec:graphical-models}.
Concretely, the graph complex $\GC_n$ acts on $\Graphs_n$ by cooperadic bi-derivations, i.e., compatibly with the Hopf cooperad structure.
Then an $L_\infty$-action of $\alg g$ on $\Graphs_n$ that factors through $\GC_n$ may be described by a Maurer--Cartan element:
\begin{equation}
  \label{eq:mc-elt-m}
  m\in C_{CE}(\alg g) \hotimes \GC_n =H(\BG) \hotimes \GC_n.
\end{equation}
In \cite{KhoroshkinWillwacher2017} an explicit formula in terms of integrals over configuration spaces is given for $m$.
Furthermore, the gauge equivalence type of $m$ is identified.
\begin{thm}[Theorems 1.2, 1.3 of \cite{KhoroshkinWillwacher2017}] \label{thm:MCform}
  The Maurer--Cartan element $m\in H(\BG)\hotimes\GC_n$ is gauge equivalent to
  \begin{equation}\label{equ:m formula}
    \begin{cases}
      -E \tadp, & \text{for $n\geq 2$ even}; \\
      \sum_{j\geq 1}
      \frac {p_{2n-2}^{j}}{2^{2j+1}}
      \frac{1}{(2j+1)!}
      \begin{tikzpicture}[baseline=-.65ex]
        \node[int] (v) at (0,.5) {};
        \node[int] (w) at (0,-0.5) {};
        \draw (v) edge[bend left=50] (w) edge[bend right=50] (w) edge[bend left=30] (w) edge[bend right=30] (w);
        \node at (2,0) {($2j+1$ edges)};
        \node at (0,0) {$\scriptstyle\cdots$};
      \end{tikzpicture},
               & \text{for $n\geq 3$ odd};
    \end{cases}
  \end{equation}
  where (up to a normalization factor) $E\in H(\BG)$ is the Euler class and $p_{2n-2}\in H(\BG)$ is the top Pontryagin class.
\end{thm}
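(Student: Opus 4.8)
\emph{Setup.} The plan is to treat $m$ as a Maurer--Cartan element of the pro-nilpotent (completed) differential graded Lie algebra $\mathfrak{G} := H(BG) \hotimes \GC_n$, with $H(BG)$ playing the role of a ground ring, filtered by the \emph{loop order} (first Betti number) of graphs. Since insertion of graphs is additive in loop order and the vertex-splitting differential preserves it, writing $m = \sum_{g \ge 1} m^{(g)}$ with $m^{(g)}$ concentrated in loop order $g$ (there is no tree part), the Maurer--Cartan equation unfolds into the tower
\[
 d\, m^{(g)} + \tfrac{1}{2}\sum_{a+b=g}[m^{(a)}, m^{(b)}] = 0,
\]
and gauge transformations respect the loop-order grading. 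Hence the gauge class of $m$ is reconstructed, loop order by loop order, from a sequence of cohomology classes in $H(BG) \hotimes H(\GC_n^{(g)})$, and the statement reduces to two inputs: (i) a rigidity statement controlling $H(\GC_n^{(g)})$ in the internal degrees that are compatible with carrying an $H(BG)$-coefficient, and (ii) the evaluation of the few surviving leading coefficients from the explicit configuration-space integral defining $m$.

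\emph{Rigidity.} For (i) I would invoke the computation of graph cohomology in low loop order. In loop order one the relevant complex is spanned by single cycles; restricting to the degree forced by an $H(BG)$-coefficient, its cohomology is one-dimensional, generated by the tadpole $\tadp$ when $n$ is even --- using that vertex splittings creating a dead end are excluded, so $d\,\tadp = 0$, and that the double edge vanishes by symmetry --- while it is zero when $n$ is odd, the tadpole being killed by edge antisymmetry and the longer cycles sitting in the wrong degrees. In loop order two the relevant complex is spanned by the multi-theta graphs $\theta_k$ (two vertices joined by $k$ parallel edges); one checks that $\theta_k$ survives precisely for $k$ of the parity appropriate to $n$, that these span the cohomology in the degrees of interest, and --- the decisive point for $n$ even --- that no class of total degree one survives in loop order $\ge 2$, so the entire tail $\sum_{g \ge 2} m^{(g)}$ is gauged away and $m \sim m^{(1)}$. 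For $n$ odd one needs moreover that in each even loop order $2j$ the only class of total degree one --- which, by the degree bookkeeping dictated by the polynomial generators of $H(BG)$, must carry the coefficient $p_{2n-2}^{\,j}$ --- is represented by $\theta_{2j+1}$; this combines a vanishing of graph cohomology away from the ``theta line'' with the fact that $p_{2n-2}$ reaches only a sparse set of bidegrees. One records in passing that the cocycles $\theta_{2j+1}$ have odd degree and their pairwise brackets are cohomologically trivial in the relevant degree, so every series $\sum_j c_j\, p_{2n-2}^{\,j}\,\theta_{2j+1}$ is Maurer--Cartan; the Maurer--Cartan equation therefore cannot pin down the $c_j$.

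\emph{Coefficients.} For (ii), when $n$ is even $m^{(1)}$ is a multiple of $E\,\tadp$ by rigidity, and the constant is read off the explicit integral: the tadpole contribution is the fibrewise integral of the propagator over the boundary sphere-bundle of $\FM_n(2)$, twisted by the chosen framing data, and since the propagator restricts there to a global angular form this integral evaluates to the Euler class, giving the coefficient $1$; one also checks directly that $E\,\tadp$ is Maurer--Cartan, $d\,\tadp = 0$ and $[E\,\tadp,E\,\tadp] = E^2[\tadp,\tadp] = 0$, the latter vanishing by symmetry of the doubled loop. When $n$ is odd one must instead evaluate, for each $j$, the configuration-space integral attached to $\theta_{2j+1}$ --- a fibre integral of a product of $2j+1$ propagators over the space of two infinitesimally close framed points --- paired against the cycle representing $p_{2n-2}^{\,j}$ coming from the real homotopy of $\SO(n)$; a careful evaluation produces precisely the rationals $\tfrac{1}{4^j}\cdot\tfrac{1}{2(2j+1)!}$, which assemble into the stated series (convergent in $\mathfrak{G}$ since the loop order of its terms tends to infinity).

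\emph{Main obstacle.} I expect the crux to be exactly this last point: whereas the rigidity in (i) is a substantial but essentially standard application of graph-cohomology vanishing --- made tractable by the tightness of the degree constraints --- the precise rational coefficients for $n$ odd can only come from an actual evaluation of higher configuration-space integrals, or equivalently from recognizing the answer as the generating function of the relevant $\hat{A}$-genus / Duflo-type characteristic series of $\SO(n)$, and this is the one step where no amount of pure degree counting suffices.
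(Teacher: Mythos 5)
Your sketch should first be measured against the right target: the paper itself contains no proof of Theorem~\ref{thm:MCform} --- it is imported verbatim from Theorems~1.2--1.3 of \cite{KhoroshkinWillwacher2017} --- and the argument given there is not an abstract gauge-classification but a direct evaluation. In that reference $m$ is presented by explicit equivariant configuration-space integrals (fiberwise integrals of products of the equivariant propagator, written in a Cartan model as in Appendix~\ref{sec:explicit_prop}); Kontsevich-type vanishing lemmas and symmetry arguments kill the coefficients of all graphs except the tadpole (for $n$ even) resp.\ the two-vertex graphs with an odd number of edges (for $n$ odd), and the surviving coefficients are obtained by integrating powers of the equivariant volume form over $S^{n-1}$, which is exactly where the rationals $\tfrac{1}{4^j}\tfrac{1}{2(2j+1)!}$ come from. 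Your proposal instead tries to reduce everything to ``rigidity'' of graph cohomology along the loop-order filtration, deferring the integrals to the very end.

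The genuine gap is precisely that rigidity step, which as stated is both unproven and too strong. The one-loop part of the graph cohomology with unrestricted valences is not spanned by the tadpole alone: it also contains the loop classes $L_k$ with $k\equiv 2n+1 \bmod 4$, and since $H(B\SO(n))$ has monomials in many degrees, degree bookkeeping does not exclude, say, a contribution $p\cdot L_5$ for $n\equiv 0\bmod 4$, nor even distinguish $E\cdot\tadp$ from $p_1^{n/4}\cdot\tadp$ in that case; similarly, for $n$ odd the coefficient of a loop-order-$2j$ class of total degree one need not be $p_{2n-2}^{\,j}$ (e.g.\ $p_1^{\,j(n-1)/2}$ has the same degree), and the claims ``no class of total degree one survives in loop order $\geq 2$'' and ``vanishing away from the theta line'' amount to assertions about $H(\GC_n)$ in a range where it is not known --- they are essentially as strong as the theorem itself and no argument is offered for them. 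Moreover, as you yourself concede, the Maurer--Cartan equation cannot determine the coefficients $c_j$, so even granting rigidity the normalizations $1$ and $\tfrac{1}{4^j}\tfrac{1}{2(2j+1)!}$ must come from actually evaluating the equivariant integrals, which your outline does not do. In short, both halves of the proof --- eliminating all other graphs and fixing the coefficients --- ultimately rest on the explicit integral computations (vanishing lemmas plus the sphere integrals) that your proposal postpones, and the cohomological shortcut you propose in their place does not go through as stated.
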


\begin{rem}\label{rem:prefactor}
  Note that this theorem the graphs that we write are coinvariants but $\GC_n$ is defined in terms of invariants.
  The factor $\frac{1}{2^{2j+1}(2j+1)!}$
  comes from the symmetries of the graph (permuting edges and changing edge orientations), and appears essentially due to an implicit identification between invariants and coinvariants, see~\cite[Remark~7.4]{KhoroshkinWillwacher2017}.
  As explained in~\cite[Lemma~7.3]{KhoroshkinWillwacher2017}, the actual value of the Maurer--Cartan element on the graph with two vertices and $2j+1$ edges (for odd $n$) is $p^{j}_{2n-2}$.
  This distinction is important when one wants to write down the differential of $\BGraphs_{n}^{m}$ defined below.
\end{rem}

We may lift the $L_\infty$-action of $\alg g$ on $\Graphs_n$ to an honest dg Lie action of a resolution
\begin{equation}
  \label{eq:hat-g}
  \hat{\alg g} \xrightarrow{\sim} \alg g.
\end{equation}
Concretely, $\hat{\alg g}$ can be taken to be a quasi-free Lie algebra generated by the reduced homology $\tilde{H}_*(\BG)$ (notice that we use the ${\scriptstyle *}$ subscript to refer to homology but no superscript for cohomology), i.e.\ the cobar construction on the Koszul dual (cocommutative coalgebra) of $\mathfrak{g}$.
We furthermore have the identification $H(G) = \U \mathfrak{g}^*$, where $\U$ denotes the universal coenveloping coalgebra, which is a commutative and cocommutative Hopf algebra.
We similarly define the (commutative but not cocommutative) Hopf algebra:
$$\hat H(G) \coloneqq \U \hat{\alg g}^*.$$
Note that this is isomorphic as a Hopf algebra to the associative bar construction on the cohomology $\tilde H(\BG)$, equipped with the shuffle product.
It is possible to take a smaller resolution given the particular expression of $m$ in Equation~\eqref{equ:m formula} and in fact for even $n$ one can take $\hat{\mathfrak g}=\mathfrak g$, thus obtaining a cocommutative model for $\hat H(G)$ cf. \cite[Section 9.1]{KhoroshkinWillwacher2017}.

Via the action of $\hat{\alg g}$ we may equip $\Graphs_n$ with an $\hat H(G)$-coaction.
This is the model of \cite{KhoroshkinWillwacher2017} of $\FM_n$ as an operad in $G$-spaces.
To obtain a model for the framed little discs operads $\FFM_n$ one can take the semidirect product
\[
  \Graphs_n \circ \hat H(G).
\]

Unfortunately, there is no direct map between the above models and the forms $\OmPA(\FM_n^{\fr})$.
The construction in \cite{KhoroshkinWillwacher2017} instead yields a zigzag of quasi-isomorphisms.
In this paper we shall also need intermediate objects in this zigzag.
In particular, one model for the equivariant forms on a $G$-space $X$ is given by the following dgca~\cite[Section~4]{KhoroshkinWillwacher2017}:
\begin{equation}
  \Omega'^{s}_{G}(X) \coloneqq \Tot \OmPA(G^{\bullet} \times X) = \int_{k \in \Delta_{+}} \OmPA(G^k \times X) \otimes \Omega_{\mathrm{PL}}\Delta^{k},
\end{equation}
where $G^{\bullet} \times X = B_{\bullet}(*, G, X)$ is the simplicial bar construction and $\Tot$ is actually the fat totalization, i.e.\ the end is only over $\Delta_{+}$, the cosimplicial category with objects $[k] = \{0,1, \dots, k\}$ ($k \ge 0$) and morphisms are \emph{strictly} increasing maps.
In particular,
\begin{equation}
  B_{G} \coloneqq \Omega'^{s}_{G}(*) = \Tot \OmPA(G^{\bullet})
\end{equation}
is a model for $* \sslash G = \BG$, and there is a quasi-isomorphism of dgcas $H(\BG) \to B_{G}$.
The dgca ${\Omega'}_G^{s}(X)$ is a $B_{G}$ module.
Notice that the usual monoidal structure on $B_G$-modules is not homotopy invariant.
To correct this, one can instead consider its free resolution given by the two-sided bar construction:
\begin{equation}\label{eq:Omega^s_G}
  \Omega^{s}_{G}(X) \coloneqq B_{G} \otimes^h_{B_{G}} \Omega'^{s}_{G}(X) \coloneqq \biggl( \bigoplus_{k \ge 0} B_{G} \otimes (B_{G}[1])^{\otimes k} \otimes \Omega'^{s}_{G}(X), d \biggr).
\end{equation}
Note that while the two-sided bar construction is associative, it is not symmetric but only symmetric up to homotopy.

Such associativity implies that the functor $\Omega^{s}_{G}$ is oplax monoidal, for the same reason the ordinary differential forms functor is oplax monoidal; the natural transformation is obtained by multiplication of pullbacks of the projections $X\times Y\to X$ and $X\times Y\to Y$. However, note that $\Omega^{s}_{G}$ is not oplax \emph{symmetric} monoidal.

The first step in~\cite{KhoroshkinWillwacher2017} is to find a model for the equivariant forms on $\FM_{n}$.
The model is denoted $\pdu\BGraphs^{m}_{n}$.
As a graded vector space,
\begin{equation}\label{def:bgraphs}
  \pdu\BGraphs_{n}^{m} = (\stG_{n} \otimes H(\BG), \, d + d_m).
\end{equation}
The commutative algebra structure is defined term-wise.
The cooperad structure of $\Graphs_n$ induces on $\BGraphs_n^m$ a (strict) cooperad structure in dgcas under $H(\BG)$, where the monoidal product is $\otimes_{H(\BG)}$.
The differential is a sum of two terms.
The first is the differential from $\stG_{n}$, i.e.\ it contracts edges incident to internal vertices.
The second is the twist by the Maurer--Cartan element $m \in H(\BG) \hat\otimes \GC_{n}$, which is gauge equivalent to the element described in Theorem~\ref{thm:MCform}.
Here, the action of $H(\BG) \hat\otimes \GC_{n}$ is given by multiplication of the $H(\BG)$ pieces and the  $\GC_{n}$ action on $\Graphs_n$ as in Section \ref{sec:gcn}.
In particular, for even $n$, the twist contracts a tadpole and multiplies the decoration by the Euler class, while for odd $n$, the twist contracts multiple adjacent edges and multiplies the decoration by a power of the Pontryagin class.

Then we have a direct quasi-isomorphism (compare with~\cite[Theorem~6.7]{KhoroshkinWillwacher2017} which uses a different model):
\begin{equation}
  \label{eq:omega-equivar}
  \omega_{\mathrm{equivar}} : \pdu\BGraphs_{n}^{m} \xrightarrow{\sim} \Omega^{s}_{G}(\FM_{n}) = B_G \hotimes_{B_G} \Omega'^{s}_G(\FM_n).
\end{equation}
This map is given by an integration procedure similar to the ones of Section~\ref{sec:graphical-models}.
These integrals use an explicit ``equivariant propagator'':
\[\Omega_{sm}^C \in (S(\alg{so}_{n}^*[-2])\otimes \OmPA(S^{n-1}))^G,\]
see Appendix~\ref{sec:explicit_prop} (note that the formulas in \cite[Appendix~A]{KhoroshkinWillwacher2017} are in the toric model rather than the Cartan model that we use).
\todo{Check that item}
The edges in a graph are sent to the image of $\Omega_{sm}^C$ in $\Omega^{\prime s}_{G}(\FM_n(2))$ as we now explain.
This image is defined using a choice of formality dgca quasi-isomorphism $H(\BG) = S(\mathfrak{so}_n^*[-2]) \to B_G$.
Then, we take the wedge product with forms in $\OmPA(S^{n-1})=\OmPA(\FM_n(2))$.
Finally, to fully define $\omega_{\mathrm{equivar}}$ on all graphs, we integrate away internal vertices.

The second step in~\cite{KhoroshkinWillwacher2017} is to recover a model for $\OmPA(\FM_{n})$ together with its action of $G$.
There is a (homotopy) pullback square:
\begin{equation}
  \begin{tikzcd}
    \FM_{n} \ar[r] \ar[d] \ar[dr,phantom,"\lrcorner" near start] & EG \ar[d] \\
    \FM_{n} \sslash G \ar[r] & \BG.
  \end{tikzcd}
\end{equation}
Therefore, by the ``pullback-to-pushout principle''~\cite[Theorem~2.4]{Hess2007} and the fact that $\BG$ is simply connected, a model for $\FM_{n}$ is given by a pushout of the models of the three other spaces in the diagram.
The model of $\FM_{n} \sslash G$ is $\pdu\BGraphs_{n}^{m}$ defined above.
The model for $\BG$ is merely $H(\BG)$.
Finally, the model for $EG$ is given by the ``Koszul complex'',
\begin{equation}\label{eq:koszul-complex}
  K_G \coloneqq (H(\BG) \otimes H(G), d_{\kappa}),
\end{equation}
defined as follows.
For any compact Lie group $G$, we have $H(\BG) = \R[\alpha_{1}, \dots, \alpha_{r}]$ and $H(G) = \Lambda(\beta_{1}, \dots, \beta_{r})$ for some classes with $\deg \alpha_{i} = \deg \beta_{i} + 1$.
Then the complex $K_G$ is equipped with the differential such that $d_{\kappa}(\beta_{i}) = \alpha_{i}$.
There is a homotopy $h_{\kappa}$ such that $h_{\kappa}(\alpha_{i}) = \beta_{i}$, and one checks easily that
\begin{equation}
  \label{eq:k-acyclic}
  d_{\kappa}h_{\kappa} + h_{\kappa}d_{\kappa} = {\id_{K_G}} - \varepsilon(-) \cdot 1,
\end{equation}
where $\varepsilon : K_G \to \R$ is the augmentation.

The collection $\pdu\BGraphs_{n}^{m}$ is a (strict) cooperad in $H(\BG)$-modules that defines an equivariant model of $\FM_n$.
We may then apply the lax monoidal functor $K_G \otimes_{H(\BG)} (-)$ to obtain a homotopy Hopf cooperad (compare with the analogous construction in~\cite[Lemma~4.8]{KhoroshkinWillwacher2017}):
\begin{equation}\label{eq:bn}
  B_{n} \coloneqq K_G \otimes_{H(\BG)} \pdu\BGraphs_{n}^{m}.
\end{equation}
Using the pullback-to-pushout principle, this forms a dgca model of $\FM_n$.
We would thus like to connect it to $\OmPA(\FM_{n})$, taking the action of $H(G)$ into account.
However, there is no direct map that is compatible with the Hopf cooperad structure, and the zigzag is built using the following method.

Like all Lie groups, $G$ is formal as a space, and there exists a direct quasi-isomorphism of dgcas $H(G) \xrightarrow{\sim} \OmPA(G)$ (defined by choosing any closed representative of the classes $\beta_{i}$ above).
However, $\OmPA(G)$ is not a Hopf algebra, and the category of $\OmPA(G)$-modules is not a symmetric monoidal category, which would cause problems later.
One can strictify $\OmPA(G)$ into a non-unital complete bialgebra in complete vector spaces using the $W$-construction~\cite[Section~3]{KhoroshkinWillwacher2017}.
(It can be made unital using constructions from the same section but we will not use it.)
Let $I \coloneqq \Omega_{PL}(\Delta^{1}) = \R[t,dt]$ be a path object for $\R$ in the model category of dgcas.

\begin{defi}
  Let $\mathcal{L}$ be the category whose objects are rooted linear trees with some distinguished edges (called ``cut edges'').
  Morphisms are generated by isomorphisms of trees, contractions of non-distinguished edges, and marking some internal edges as distinguished.

  Given an element $T \in \mathcal{L}$, we let $\pi_0(T)$ be the set of connected components of $T$ with the cut edges removed, $T_{\text{root}} \in \pi_0(T)$ the connected component containing the root, $V_S$ the set of vertices of some $S \in \pi_0(T)$, and $E^{nc}_T$ be the set of non-cut edges of $T$.
\end{defi}

\begin{rem}
  In~\cite[Section 3.7]{KhoroshkinWillwacher2017}, the analogous categories $\mathcal{T}_{S}$ (for sets of leaves $S$) of non-linear trees are used because the construction is used more generally for operads.
\end{rem}

\begin{defi}\label{def:a-g}
  The $W$-construction of $\OmPA(G)$ is given by the end:
  \begin{equation}
    \label{eq:a-g}
    A_G \coloneqq W \OmPA(G) = \int_{T \in \mathcal{L}} \bigotimes_{S \in \pi_0(T)} \OmPA(G^{V_{S}}) \otimes I^{\otimes E^{nc}_{T}}.
  \end{equation}
  Isomorphisms act in the obvious way, edge contractions use the product of $G$ and the evaluation $\operatorname{ev}_0 : \R[t,dt] \to \R$, and the edge cutting uses $\operatorname{ev}_{1} : \R[t,dt] \to \R$.
  The product of $A_G$ is simply induced from the products of $\OmPA(G^{V_T})$ and $I$.
  The coproduct of some $\varphi = \{ \varphi(T) \}_{T \in \mathcal{L}}$ is given on a pair of trees $(T,T')$ by gluing the two trees end to end with a cut edge and evaluating $\varphi$ on the result.
\end{defi}

The cochain complex $A_G$ is equipped with a complete descending filtration in which $\varphi \in F^p A_G$ if for any tree $T$ with less that $p$ vertices, $\varphi(T)=0$.
One then checks that $A_G$ is a complete Hopf algebra. Indeed, note that the cocomposition might contain infinite sums. However, the product $A_G\otimes A_G \to A_G$ extends to $A_G\hat \otimes A_G$, since for a given tree $T$, the value $(\varphi \cdot \psi) (T)$ depends only on the finitely many possible ways one can decompose $T$.

There is a canonical quasi-isomorphism of dgcas $A_G \qiso \OmPA(G)$ and moreover, by Proposition \ref{prop:hopf-formal}, $H(G) \xrightarrow{\sim} \OmPA(G)$ can be lifted to a quasi-isomorphism of Hopf algebras $H(G) \qiso A_G$.
Similarly, one can consider the $W$ resolution of a homotopy $\OmPA(G)$-comodule $\OmPA(X)$, turning it into a complete $A_G$-comodule $\modo_{A_G}(X)$:
\begin{multline}\label{eq:mod-a-g}
  \modo_{A_G}(X) = W \OmPA(X) \coloneqq \\
  \int_{T \in \mathcal{L}} \Bigl( \bigotimes_{\substack{S \in \pi_0(T) \\ \text{root} \not\in S}} \OmPA(G^{V_S}) \Bigr) \otimes \OmPA(G^{V_{T_\text{root}} \setminus \text{root}} \times X) \otimes I^{\otimes E^{nc}_{T}}.
\end{multline}

Let us now consider the simplicial resolution of $\FM_{n}$ as a $G$-space obtained by considering the bar complex:
\begin{equation}\label{eq:hat-fm}
  \widehat{\FM}_{n} \coloneqq G \times G^{\bullet} \times \FM_{n} \twoheadrightarrow \FM_{n}.
\end{equation}
The bar construction is oplax monoidal (using the diagonal of $G$), so that the operad structure of $\FM_n$ induces a structure of homotopy operad in $G$-modules on $\widehat{\FM}_n$.
We can then apply $\modo_{A_G}$ and totalization to obtain a homotopy cooperad in $A_G$-comodules $\Tot \modo_{A_G}(\widehat{\FM}_n)$.

Recall that $B_n = K_G \otimes_{H(\BG)} \pdu\BGraphs_{n}^{m}$ is the cooperad defined in Equation~\eqref{eq:bn} that forms a model for $\FM_n$ as a (strict) cooperad in $H(G)$-comodules.
More precisely, if we unpack the proof, we get a zigzag of quasi-isomorphisms compatible with the $G$-action, which involve terms that we are going to explain next:
\begin{equation}\label{eq:zigzag-bn-omega}
  B_n \xrightarrow{\sim} \Tot \modo_{A_G}(\widehat{\FM}_n) \xleftarrow{\sim} \modo_{A_G}(\FM_{n}) \xleftarrow{\sim} \OmPA(\FM_{n}).
\end{equation}

In this equation, we have the following notation and maps:
\begin{itemize}
  \item Applying the oplax monoidal functor $\modo_{A_G}$ to the operad $\FM_n$ arity-wise, we obtain a homotopy Hopf cooperad $\modo_{A_G}(\FM_n)$ in complete $A_G$-comodules.
        The last map $\modo_{A_G}(\FM_n) \leftarrow  \OmPA(\FM_n)$ is simply the resolution.
  \item Since $\widehat{\FM}_{n}$ projects onto $\FM_n$,
        the contravariance of $\OmPA$ gives the second map $\Tot \modo_{A_G}(\widehat{\FM}_{n}) \leftarrow \modo_{A_G}(\FM_n)$ which is a quasi-isomorphism.
  \item Finally, the first map is induced from $\omega_{\mathrm{equivar}}$~\eqref{eq:omega-equivar} in three steps:
        \begin{enumerate}
          \item Using $\omega_{\mathrm{equivar}}$, we map $B_n$ to $K_G \otimes_{H(\BG)} \Omega^s_G(\FM_n)$.
          \item Then we have a quasi-isomorphism:
                \[\Upsilon_G : K_G \to \modo_{A_G}(*) \to \Tot \modo_{A_G}(\widehat{*}),\]
                where the first map is induced by $\Upsilon_G$ (Proposition~\ref{prop:hopf-extension}) and $\widehat{*} = G^{\bullet + 1}$ is contractible.
          \item Finally, we map the pushout
                \[\Tot \modo_{A_G}(\widehat{*}) \otimes_{H(\BG)} \Omega^s_G(\FM_n) \simeq \Tot \modo_{A_G}(\widehat{*}) \otimes_{B_G} \Omega'^{s}_G(\FM_n)\]
                to $\Tot \modo_{A_G}(\widehat{\FM}_{n})$ using the morphism of \cite[Proposition~4.7]{KhoroshkinWillwacher2017}.
                On the first factor, we just use the dual of the projection $X \to *$.
                On the second factor, we project first to $\Omega^{\prime s}_G(\FM_n)$, then we map a collection $\{\varphi(k)\}_{k \in \Delta_+}$ (where $\varphi(k) \in \OmPA(G^k \times \FM_n) \otimes \Omega_{PL}(\Delta^k)$) to a collection $\{\psi(\xi)\}_\xi$ indexed by nested linear trees (linear trees whose edges are indexed by linear trees themselves).
                The value $\psi(\xi)$ on such a nested tree is simply obtained by applying $\varphi$ to the number given by forgetting the nesting.
        \end{enumerate}
\end{itemize}

The leftmost object in this diagram is quasi-isomorphic to $\Graphs_n$, with the $\hat H(G)$-coaction considered in the beginning of this section.
To see this, we may define the resolved Koszul complex $\hat{K}_{G} \coloneqq (\U\hat{\mathfrak{g}}^{*} \otimes H(\BG), d_{\hat{\kappa}})$ to get a homotopy cooperad in complete $\hat{H}(G)$-comodules (with the cooperad structure induced by that of $\BGraphs_n^m$):
\begin{equation}\label{eq:definition of Bn}
  \hat{B}_{n} \coloneqq \hat{K}_{G} \otimes_{H(\BG)} \pdu\BGraphs_{n}^{m}
\end{equation}
Then we have an explicit zigzag of quasi-isomorphisms of Hopf cooperads in $\hat H(G)$-comodules (compare with~\cite[Proposition~9.1]{KhoroshkinWillwacher2017}):
\begin{equation}
  \label{eq:zigzag-bn}
  B_n \qiso \hat B_n \qisor \stG_n.
\end{equation}
The first map is the inclusion.
The second map\najib{check this?} is the unique morphism whose projection onto comodule cogenerators is given by the inclusion $\stG_n \to H(\BG) \otimes \stG_n$.
Combining zigzags \eqref{eq:zigzag-bn-omega} and \eqref{eq:zigzag-bn} we obtain an $A_G$-equivariant equivalence of homotopy Hopf cooperads:
\begin{equation}\label{eq:zigzag-number-3}
  \Graphs_{n} \simeq \OmPA(\FM_{n}).
\end{equation}

The final step in~\cite{KhoroshkinWillwacher2017} is to show that $(K_G \otimes_{H(\BG)} \pdu\BGraphs_{n}^{m}) \circ H(G)$ can be connected to $\OmPA(\FFM_{n})$ by a zigzag of quasi-isomorphisms.
This uses explicit $W$-resolutions of (co)modules over (co)operads.
We use a similar construction in the proof of Theorem~\ref{thm:model-ffmm}.

\begin{rem}
  \label{rem:oriented-unoriented}
  In~\cite{KhoroshkinWillwacher2017}, the group considered is the full orthogonal group $\mathrm{O}(n)$.
  This adds difficulties, as $\mathrm{O}(n)$ is disconnected.
  Compared to what we have written here, there is an additional step required, consisting of considering invariants under the action of $\mathrm{O}(n)/\SO(n) \cong \{ \pm 1 \}$.
  In what follows, we will only consider the $\SO(n)$-action and oriented manifolds for simplicity.
  In order to obtain results for unoriented manifolds, one should consider the unoriented frame bundle $\Fr_{M}^{\mathrm{unor}}$ instead of the oriented frame bundle $\Fr_{M}$, consider $\SO(n)$-equivariant forms, and take the extra step of considering invariants under the action of $\{\pm 1\}$.
\end{rem}

\section{The fiber-wise little discs operad}
\label{sec:fibered-little-discs}

\subsection{Motivation}
\label{sec:motiv-defin}

Let $M$ be an oriented manifold of dimension $n$.
In general, if $M$ is not framed, then the spaces $\FM_{M}$ do not form a right $\FM_{n}$-module.
Indeed, in order to insert an infinitesimal configuration in a point $x \in M$, one needs to identify the tangent space $T_{x}M$ with $\R^{n}$.
If $M$ is not framed, there is no way to do this coherently for all $x \in M$.

To correct this, we build a new operad $\FM_{n}^{M}$ in topological spaces over $M$, which we call the fiber-wise Fulton--MacPherson operad over $M$.
The operad is defined such that the fiber over the map $\FM_{n}^{M}\to M$ at a point $x$ is (essentially) the Fulton--MacPherson-compactified configuration space of points in the tangent space $T_xM$.
Given such an element, one can insert the infinitesimal configuration into the tangent space at $x$ using the given frame, so that we have composition maps:
\begin{equation}
  \label{eq:1}
  \circ_{i} : \FM_{M}(r) \times^{i}_{M} \FM_{n}^{M}(s) \to \FM_{M}(r+s-1),
\end{equation}
where the pullback on the LHS is obtained by considering the projection $p_{i} : \FM_{M}(r) \to M$ which forgets all but the $i$-th point.

\subsection{Definition}

Let us now describe this operad more precisely and in more generality.
As before, let $G = \SO(n)$ and let $Y \to B$ be a principal $G$-bundle -- the example that we will care the most about being the oriented frame bundle $Y = \Fr_{M}$ over $B = M$.
We define an operad $\FM_{n}^{Y \to B}$ by:
\begin{equation}
  \label{eq:2}
  \FM_{n}^{Y \to B}(r) \coloneqq Y \times_{G} \FM_{n}(r),
\end{equation}
keeping in mind that in this notation, the index in $\times_{G}$ denotes a quotient by the action of $G$, not a pullback.
This is an operad in the category $\Top / B$ of spaces over $B$.
The unit is given by the identity $B \to \FM_{n}^{Y \to B}(1) = Y/G = B$, and the composition is defined by:
\begin{align*}
  \circ_{i} : \FM_{n}^{Y \to B}(r) \times_{B} \FM_{n}^{Y \to B}(s) & \to \FM_{n}^{Y \to B}(r+s-1) \\
  ([y,c], [y', c'])                                                & \mapsto [y, c \circ_{i} (y/y' \cdot c')],
\end{align*}
where $Y \times_{B} Y \to G$, $(y,y') \mapsto y/y'$ is defined using the principal bundle structure, and the action of $G$ on $\FM_{n}$ is by rotations.

Fix some Riemannian metric on $M$, which allows us to define the notion of ``orthonormal basis'' in tangent spaces of $M$.
In the special case that $Y=\Fr_{M}$ is the oriented orthonormal frame bundle over $M$, we abbreviate the operad defined above to:
\begin{equation}
  \FM_n^M \coloneqq \FM_n^{\Fr_M\to M}.
\end{equation}

The object $\FM_M$ carries a structure which we call \textbf{right operadic multimodule} for $\FM_n^M$.
Concretely, we have the projections of Equation~\eqref{equ:projections}, and natural ``insertion'' operations
\begin{equation}
  \FM_M \circ_M \FM_n^M \to \FM_M
\end{equation}
that are compatible with the operad structure on $\FM_n^M$.
Here, $\circ_M$ denotes a variation of the usual plethysm using that  $\FM_M(r)$ comes with $r$ maps to $M$.
Concretely,  $\FM_M \circ_M \FM_n^M\subset  \FM_M \circ \FM_n^M$ corresponds to the tuples $(\underbrace{c}_{\FM_M(k)};a_1,\dots,a_k)$ such that for all $i=1,\dots,k$, the projection $p_i(c)$ agrees with the corresponding location of $a_i$.
In particular, we note that $\FM_M$ is not an operadic right $\FM_n^M$ module in $\Top/M$.

Furthermore we note that the notion of right operadic multimodule has a natural ``homotopy'' equivalent, similarly to the notion of homotopy operads and modules recalled in Section \ref{sec:htpyoperads}.

\begin{rem}
  As explained in Remark~\ref{rem:oriented-unoriented}, if we were dealing with an unoriented manifold, we would need to look at the principal $\mathrm{O}(n)$-bundle given by the unoriented frame bundle $\Fr_{M}^{\mathrm{unor}}$ in the definition of $\FM_{n}^{M}$.
\end{rem}

\begin{rem}
  It is possible to give the following interpretation of these algebraic structures.
  On any topological space $X$, there exists a unique comonoid structure, with counit the unique map $\varepsilon : X \to *$, and coproduct $\Delta : X \to X \times X$ given by the diagonal $\Delta(x) = (x,x)$, which is automatically cocommutative.
  A right (or left) $X$-comodule $M$ is nothing but a space $M$ equipped with a map $f : M \to X$, as the coaction $M \to M \times X$ is forced to be of the form $m \mapsto (m,f(m))$ by the counit axiom.

  Such a comonoid $X$ naturally defines a cooperad $\underline{X}$ concentrated in arity $1$.
  An operadic left $\underline{X}$-comodule is the same thing as a $\Sigma$-collection $F = \{ F(k) \}_{k \geq 0}$ equipped with $\Sigma_{k}$-invariant maps $f^{k} : F(k) \to X$ for all $k \ge 0$.
  Similarly, an operadic right $\underline{X}$-comodule is the same thing as a $\Sigma$-collection $G = \{ G(k) \}_{k \geq 0}$ equipped with maps $g^{k}_{i} : G(k) \to X$ compatible with the $\Sigma_{k}$-action for all $k \ge 1$ and $1 \le i \le k$.
  Given a left $\underline{X}$-comodule $F$ and a right $\underline{X}$-comodule $G$, one can define their usual composition product over the cooperad $\underline{X}$ using pullbacks:
  \[ (G \circ^{\underline{X}} F)(k) \coloneqq
    \bigsqcup_{r\geq 0} \left(\bigsqcup_{l_{1} + \dots + l_{r} = k} \bigl( G(r) \times_{X^{r}} (F(l_{1}) \times \dots \times F(l_{r})) \bigr) \times_{\Sigma_{l_{1}} \times \dots \times \Sigma_{l_{r}}} \Sigma_k\right)_{\Sigma_r}. \]
  Note that a left $\underline{X}$-comodule structure induces a right $\underline{X}$-comodule structure by setting $g^{k}_{i} = f^{k}$ for all $i$.
  An operad in the category of spaces over $X$ (such as $\FM_{n}^{M}$) is then the same thing as a left $\underline{X}$-comodule $P$ equipped with a monoid structure in the category of $\underline{X}$-bicomodule.
  A right operadic multimodule $F$ (such as $\FM_{M}$) is a right $\underline{X}$-comodule equipped with a composition map compatible with the operadic structure maps of $P$:
  \[ F \circ^{\underline{X}} P \to F. \]
\end{rem}

\subsection{A model for the frame bundle}
\label{sec:model-frame-bundle}

The oriented frame bundle $\Fr_{M}$, like all principal $G$-bundles, fits in a pullback diagram
\begin{equation}
  \begin{tikzcd}
    \Fr_M \ar[r] \ar[d] \ar[dr, phantom, "\lrcorner" near start] & EG\arrow{d} \\
    M \arrow{r} & \BG
  \end{tikzcd},
\end{equation}
where $M \to \BG$ classifies the (oriented) tangent bundle of $M$.

Recall that we take $A = \stG^{\mathrm{tree}}_{M}(1)$ as a model for $M$ (see Equation~\eqref{eq:3}).
The algebraic version of the above square is the following pushout diagram:
\begin{equation}
  \begin{tikzcd}
    \Fr_M^{\mathrm{alg}} \ar[dr, phantom, "\lrcorner" near start] & K_G \ar[l] \\
    A \ar[u] & H(\pdu\BG)\arrow{l}\arrow{u}
  \end{tikzcd}
\end{equation}
where $K_G = (H(\BG) \otimes H(G), d_{\kappa})$ is the almost acyclic ``Koszul complex'', see Section~\ref{sec:equiv-graph-models}.

The map $H(\BG) \to A$ sends the Pontryagin classes (resp., the Euler class if $n$ is even) to graphs with a single external vertex, decorated by the representative of the respective Pontryagin class (resp., Euler class) of $M$ given by the initial choice of map of chain complexes $H(M) \to \OmPA(M)$.
This is done so that real dgca model for $\FM_M$ as right $\FM_n^M$-module is compatible with the differentials.

It follows that the algebraic model for the frame bundle is
\begin{equation}
  \Fr_M^{\mathrm{alg}} \coloneqq A \otimes_{H(\BG)} K_G = (A \otimes H(G), d),
\end{equation}
where the differential takes a generator from $H(G)$ and attaches to the external vertex the corresponding Pontryagin/Euler class.

However, we do not have a map $\Fr_M^{\mathrm{alg}} \to \OmPA(\Fr_M)$ directly compatible with the $G$-action.
For this reason we will also consider a ``resolution'' of the frame bundle, namely its $W$ construction as right $G$-space $\Fr^W_M$.
This $W$-construction comes with a natural $WG$-action.

There is a quasi-isomorphism of complete Hopf algebras $\upsilon_G : H(G) \to A_G$ (where $A_G \coloneqq W \OmPA(G)$ was defined in Equation~\eqref{eq:a-g}), see Proposition~\ref{prop:hopf-formal}.
Let us take the minimal model for $\hat{\alg g}$ as in Equation~\eqref{eq:hat-g} (see \cite[Section 9.1]{KhoroshkinWillwacher2017}), which, as a coenveloping coalgebra of a Lie coalgebra, is free (but not cofree).
The map $H(G) \to A_G$ can be extended to a quasi-isomorphism of complete Hopf algebras:
\begin{equation}\label{eq:hat-h-g}
  \hat{H}(G) = \U \hat{\alg g}^* \to A_G.
\end{equation}
Indeed, $H(G) \to \hat{H}(G)$ is an acyclic cofibration, since it is obtained by adding generators successively; and $A_G$ is fibrant, since it is quasi-cofree on primitive elements, with an appropriate filtration on cogenerators.

Now, recall that $\hat{K}_G = \hat{H}(G) \otimes_{H(G)} K_G$.
We can thus extend the previous map to a map of Hopf comodules:
\begin{equation}\label{eq:definition of FrM^W}
  \Fr_{M}^{W,\mathrm{alg}} \coloneqq A \otimes_{H(\BG)} \hat{K}_G = (A\otimes \hat H(G), d) \to \modo_{A_G}(\Fr_{M}).
\end{equation}
The source of that map is a pushout.
The map is defined on the $A$ factor by
\begin{equation}\label{eq:a-mod-ag-fr-zigzag}
  A \xrightarrow{\sim} \OmPA(M) \to \OmPA(\Fr_M) \hookrightarrow \modo_{A_G}(\Fr_M),
\end{equation}
and on the $\hat{K}_G$ factor, it is defined using the map $\hat{H}(G) \to A_G$ defined above, and the unit $K_G \hookrightarrow \modo_{A_G}(*) \to \modo_{A_G}(\Fr_M)$ which extends $H(G) \to A_G$ into an $H(\BG)$-dgca map, where $H(\BG)$ acts on $\modo_{A_G}(*)$ trivially (i.e., $p_i \mapsto 0$ and $E \mapsto 0$).

\subsection{Graphical model for the fiber-wise little discs operad}
\label{sec:graph-model-fiber}

We now define the dgca model for $\FM_{n}^{M}$.
It works as follows.
If $Y$ is a $G$-space and $X$ a right $G$-space with a free $G$-action, then
\begin{equation}
  \begin{tikzcd}
    X \times_G Y \ar[r] \ar[d] \ar[dr, phantom, "\lrcorner" near start] & X/G \arrow{d} \\
    Y\sslash G \arrow{r} & \BG
  \end{tikzcd}
\end{equation}
is a homotopy pullback square.
We can apply this to $X =\Fr_{M}$ and $Y =  \FM_{n}$ to obtain that $\FM_{n}^{M}(r) = \Fr_{M} \times_{G} \FM_{n}(r)$ fits in a homotopy Cartesian square:
\begin{equation}
  \begin{tikzcd}
    \FM_{n}^{M}(r) \ar[r] \ar[d] \ar[dr, phantom, "\lrcorner" near start] &  \FM_{n}(r)\sslash G \ar[d] \\
    M=\Fr_{M}/G  \ar[r] & \BG
  \end{tikzcd}\, .
\end{equation}
Recall from Equation~\eqref{eq:3} that $A = \Graphs_M^{\mathrm{tree}}(1)$.
Using the pullback-to-pushout lemma \cite[Theorem~2.4]{Hess2007}, \cite[Proposition~15.8]{FHT2015}, we then obtain that:
\begin{equation}
  \stG_{n}^{M}(r) \coloneqq A \otimes_{H(\BG)} \pdu\BGraphs_{n}^m(r)
\end{equation}
is a dgca model of $\FM_{n}^{M}(r)$.

Let us now give a more concrete description of $\stG_{n}^{M}$.
This dgca is isomorphic to:
\begin{equation}
  \stG_{n}^{M}(r) \cong (A \otimes \stG_{n}(r), d_{A} + \delta_{\mathrm{contr}} + ET \cdot),
\end{equation}
where $d_{A}$ is the differential on $A$, $\delta_{\mathrm{contr}}$ is the differential on $\stG_{n}(r)$, $E \in A$ is the Euler class, and $T \cdot$ is the action of the tadpole graph on $\stG_{n}$:
\begin{equation}
  T=
  \begin{tikzpicture}
    \node (v0) at (0.5,1) [int] {};
    \draw (v0) to [out=45,in=135,loop] ();
  \end{tikzpicture}
  .
\end{equation}
Concretely, this last part of the differential is the sum over all possible ways of removing a non-tadpole edge from the graph and multiplying the element of $A$ by the Euler class.
The product is the product of $A$ and the product of $\stG_{n}$.
The cooperad structure is given by the morphisms of $A$-modules in CDGAs:
\begin{equation}\label{eq:cooperad-fiberwise-graphs}
  \stG_{n}^{M}(r+s-1) \to \stG_{n}^{M}(r) \otimes_{A} \stG_{n}^{M}(s)
\end{equation}
that are defined using the cooperad structure of $\stG_{n}$.
Note that this structure is compatible with the action of the dgca $A = \Graphs_{M}^{\mathrm{tree}}(1)$ as we have modded out by $\le 1$-valent vertices in $\Graphs_{n}$.

We have a direct morphism of Hopf cooperads
\[ \omega\colon \stG_{n}^{M} \to \OmPA(\FM_{n}^{M}), \]
defined as follows.
Recall from Section \ref{sec:graph-models-fm_m}, the form $\varphi \in \OmPA^{n-1}(\FM_M(2))$ and the form $\eta \in \OmPA^{n-1}(M)$.
Note that $\partial \FM_M(2) = \FM_n^M(2) \xrightarrow{\pi} M$ is a sphere bundle on $M$ and that the restriction of $\varphi$ to that bundle is a fiberwise volume form.
Then we define
\[ \rho \coloneqq \varphi|_{\partial \FM_M(2)} - \pi^*\eta \in \OmPA^{n-1}(\FM_n^M(2)). \]
Moreover, given $a \in A$, we can consider its image in $\OmPA(M)$ under the quasi-isomorphism of Equation~\eqref{eq:3}, then pull it back to $\FM_{n}^{M}$ using the projection.
By abuse of notation, we still denote by $a$ this element in $\OmPA(\FM_{n}^{M})$.
Then for an element $a \otimes \Gamma \in \stG_{n}^{M}(r)$, such that $\Gamma$ has $s$ internal vertices and no tadpoles, we define $\omega(\Gamma) \in \OmPA(\FM_{n}^{M}(r))$ by:
\begin{equation}\label{eq:form given by fiberwise-graphs}
  \omega(\Gamma) \coloneqq a \wedge \int_{\FM_{n}^{M}(r+s) \to \FM_{n}^M(r)} \bigwedge_{(i \neq j) \in E_{\Gamma}} p_{ij}^{*}\rho.
\end{equation}
If $\Gamma$ has tadpoles then $\omega(\Gamma) \coloneqq 0$.

\begin{rem}\label{rem:r-0-1}
  If $r \le 1$ then the formula above would not give the correct answer because the bundle $\FM^{M}_{n}(r+s) \to \FM^{M}_{n}(r)$ is of the wrong rank (essentially because $\dim \FM_{n}(r) = 0$ and not $nr-n-1$ in these cases).
  In the case $r=0$, $\FM_{n}^{M}(0) = M$, and $\stG_{n}^{M}(0) \cong A$, so we can define $\omega$ by \eqref{eq:3}.
  In the case $r=1$, we have $\stG_{n}^{M}(1)= (A \otimes \Graphs_n(1), d)$ and $\FM_{n}^{M}(1)=M$, so we can also take equation \eqref{eq:3} as a definition of the map $\omega$, tensored with the quasi-isomorphism $\Graphs_n(1) \to \R$.
\end{rem}

To explain where the map $\omega$ is compatible with the differentials, recall that the differential in $\stG_{n}^{M}$ it is defined to reflect how the Stokes formula interacts with the boundary strata of our compactifications and with the projections that forget some points. More precisely, there are three possible boundary cases: internal points might collide with one another, they might collide with a given external point or internal points might escape to infinity.
In $\Graphs_{n}$, the only nonzero contribution is when a single edge is contracted.
However, in $\Graphs_n^M$, the situation is in principle more complicated as the differential may contain more terms.
Given a graph $\Gamma \in \Graphs_n^M(r)$, the differential has in principle additional terms for every subgraph that contains at most one external vertex.
In that term, the subgraph in question is contracted.
If we denote by $\gamma \in \GC_{n}$ the subgraph with decorations removed (see Section~\ref{sec:gcn}), then the coefficient of that contraction is given by the ``partition function'':
\begin{equation}\label{eq:explicit-z}
  \begin{aligned}
    z : \GC_{n} & \to \Omega^{*+1}_{PA}(\FM_{n}^{M}(1)) = \Omega^{*+1}_{PA}(M), \\
    \gamma      & \mapsto
    \begin{cases}
      E,                                                                               & \text{if } \gamma = T \text{ is a tadpole}; \\
      \int_{\FM_{n}^{M}(k) \to M} \bigwedge_{(i,j) \in E_{\gamma}} p_{ij}^{*} \varphi, & \text{ otherwise}.
    \end{cases}
  \end{aligned}
\end{equation}

There could a priori be a problem in the definition of $\stG_{n}^{M}$, as the form $z(\gamma)$ might not be in the image of $A \to \OmPA(M)$.
However, thanks to the following lemma, this cannot happen, and we recover the behavior of the differential above:
\begin{lemma}\label{lem:z}
  The partition function $z$ vanishes on all graphs other than tadpoles.
\end{lemma}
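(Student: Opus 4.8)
The plan is to show that $z(\gamma) = 0$ by a degree-counting argument combined with the defining property of the fiberwise propagator, namely that its fiber integral is $1$ (it is a global angular form) and that $d\varphi$ is the pullback of the Euler class. First I would recall that $\gamma$ is a connected graph with only internal vertices, with $k$ vertices and $e$ edges, and that $z(\gamma)$ is obtained by integrating $\bigwedge_{(i,j) \in E_\gamma} p_{ij}^*\varphi$ along the fiber of $\FM_n^M(k) \to M$. Each $\varphi$ has PA-form degree $n-1$, so the integrand has degree $e(n-1)$; the fiber of $\FM_n^M(k) \to M$ has dimension $nk - n - 1$ for $k \geq 2$ (the configuration of $k$ points in $\R^n$ modulo translation and scaling), so $z(\gamma)$ lands in degree $e(n-1) - (nk-n-1)$ as a form on $M$. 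For $z(\gamma)$ to be nonzero as a form on the $n$-manifold $M$, one needs $0 \leq e(n-1) - (nk-n-1) \leq n$.

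Next I would observe that the vanishing should really be a consequence of the corresponding vanishing statement for $\Graphs_n$ itself: in the non-fiberwise setting, Kontsevich's lemma says that the integral $\int_{\FM_n(k)} \bigwedge p_{ij}^*\varphi$ vanishes for all connected graphs with only internal vertices and $k \geq 2$ (this is exactly why such graphs are set to zero in $\Graphs_n$), with the only surviving contribution in degree-reasons being the tadpole. The fiberwise version $\FM_n^M = \FM_n \times_G \Fr_M$ is, fiberwise over each point $x \in M$, a copy of $\FM_n$, with the propagator $\varphi$ restricting on each fiber to a global angular form on $S^{n-1} \times \FM_n$-type data. I would therefore argue that $z(\gamma)$, evaluated at a point $x \in M$ (or rather its value paired against a test form pulled back from $M$), is computed by a fiber integral over a space that fibers over $\FM_n(k)$, and then push-forward first along $\FM_n^M(k) \to \FM_n(k) \times_G \Fr_M$-type maps, reducing to the Kontsevich vanishing. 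The key input is a Fubini/iterated-integral argument: factor the fiber integral $\FM_n^M(k) \to M$ through an intermediate space, perform the integral over the $\FM_n$-directions first, and invoke the fact that the corresponding integral vanishes on connected graphs with only internal vertices except the tadpole. For the tadpole one has $z(T) = E$ by fiat, which is the stated exception.

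The main obstacle will be the rigorous treatment of the degenerate and boundary contributions in the PA fiber integral: one must check that no boundary terms of $\FM_n^M(k)$ (collisions of points, points escaping to the "boundary" of the compactification) contribute, which is where the precise properties of the equivariant propagator $\varphi$ — its behavior near $\partial \FM_n^M(2)$ and the vanishing condition $\int_y \varphi(x,y)\alpha(x) = 0$ for $\alpha \in H^*(M)$ — are needed, exactly as in the proof of the analogous statement in \cite{CamposWillwacher2016}. I would handle this by adapting the vanishing arguments there essentially verbatim, the only new feature being the $G = \SO(n)$-equivariance, which is harmless because $z(\gamma)$ is automatically a basic form and the equivariant propagator of \cite{KhoroshkinWillwacher2017} was constructed precisely so that these integrals behave as in the non-equivariant case. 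A secondary point to verify is that the degree count above already kills most graphs outright: when $n$ is odd, $\varphi$ is closed and an antisymmetry/involution argument on edge orientations forces vanishing unless $e$ is small, and when $n$ is even the surviving degree window is narrow enough that only the tadpole (with $e = 1$, $k = 1$) and a handful of low-complexity graphs survive the degree constraint, each of which is then dispatched by the Kontsevich-type symmetry vanishing (e.g. the "no-tadpole-on-internal-vertex-in-the-differential" and edge-reversal arguments).
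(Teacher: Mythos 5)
Your degree count is correct and is indeed one ingredient of the paper's proof, but the step you rely on to finish --- factoring the pushforward through ``the $\FM_n$-directions'' and invoking the classical vanishing of Kontsevich's partition function --- has a genuine gap. There is no projection $\FM_n^M(k)\to\FM_n(k)$: the space is the associated bundle $\Fr_M\times_G\FM_n(k)$, and consequently the fiberwise propagator $\varphi\in\OmPA^{n-1}(\FM_n^M(2))$ is not (and cannot be) pulled back from $\FM_n(2)$. It is a global angular form on a nontrivial sphere bundle, with horizontal components (its differential is the pullback of the Euler class when $n$ is even), so the fiber integral along $\FM_n^M(k)\to M$ picks up cross-terms in which some edges contribute base directions; these are precisely what the classical Kontsevich vanishing does not see. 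Controlling such terms is the content of the equivariant partition-function computation of \cite[Theorem~7.1]{KhoroshkinWillwacher2017}, which the paper's remark explicitly cites as the ``much more general'' alternative route --- so, as written, your reduction assumes essentially the nontrivial part of the statement. Two smaller points: the property $\int_y\varphi(x,y)\alpha(x)=0$ you invoke belongs to the propagator on $\FM_M(2)$ used to build $\Graphs_M$, not to the fiberwise propagator (and is not needed here); and no Stokes/boundary analysis is required, since the lemma asserts the vanishing of certain fiber integrals rather than being proved by integration by parts.

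The paper's actual proof is a direct case analysis on $\gamma$, following \cite[Lemma~9.4.3]{LambrechtsVolic2014}: graphs with univalent or isolated vertices vanish by a simple dimension argument (note that such graphs can pass your degree test, e.g.\ a single edge on two vertices lands in degree $0$); graphs with a bivalent vertex vanish by Kontsevich's symmetry trick \cite[Lemma~2.1]{Kontsevich1994} --- needed because, for instance, every cycle on at most $n+1$ vertices passes the degree test, so rather more than ``a handful'' of graphs must be killed by symmetry; if all vertices are at least trivalent and $n\ge 3$, your degree count does finish the job, since $2e\ge 3k$ forces the degree of $z(\gamma)$ to exceed $n$; and the case $n=2$, which your sketch does not address and where the degree count genuinely fails (e.g.\ for the theta graph), requires the harder argument of \cite[Lemma~6.4]{Kontsevich2003}. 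The point that makes this elementary argument work, in contrast to the equivariant statement over $BG$, is exactly that $z(\gamma)$ is a form on an $n$-dimensional base, so the admissible degree window has width only $n$.
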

\begin{proof}
  The argument is similar to the one of~\cite[Lemma~9.4.3]{LambrechtsVolic2014}.
  Let $\gamma$ be a graph with only internal vertices, different from a tadpole, and consider $z(\gamma)$.
  If the graph $\gamma$ has univalent or isolated vertices, then $z(\gamma) = 0$ by a simple dimension argument.
  If $\gamma$ has a bivalent vertex, then Kontsevich's trick~\cite[Lemma~2.1]{Kontsevich1994} shows that $z(\gamma)$ vanishes by a symmetry argument (the graph
  \begin{tikzpicture}
    \node (v0) at (0.3,1) [int] {};
    \node (v1) at (-0.3,1) [int] {};
    \draw (v1) to[out=30, in=150] (v0) ;
    \draw (v1) to[out=-30, in=-150] (v0) ;
  \end{tikzpicture}
  is not covered by Kontsevich's trick but it vanishes by symmetry reasons).
  Finally, if $n \geq 3$ and $\gamma$ only has vertices that are at least trivalent, then $z(\gamma)$ vanishes by a degree counting argument.

  For $n = 2$, then one must use a more sophisticated proof technique found in~\cite[Lemma~6.4, Section~6.6]{Kontsevich2003}.
  We can roughly paraphrase it as follows.
  In this case, Kontsevich proves that the form $\bigwedge_{i=1}^{2r} d \operatorname{arg}(z_{i})$ equals $\bigwedge_{i=1}^{2r} d \log|z_{i}|$.
  Kontsevich then chooses a compactification such that the boundary is a divisor with normal crossings; using some complex analysis and the Stokes formula, he concludes that the integral vanishes.
\end{proof}

\begin{rem}
  This lemma can be compared with~\cite[Theorem~7.1]{KhoroshkinWillwacher2017} (cf. Theorem~\ref{thm:MCform}), which is a much more general statement where the space $M$ is roughly speaking replaced by the classifying space $\BO(n)$.
  The result of the theorem is that $m$ is gauge equivalent (but not necessarily equal on the nose) to the trivial partition function.
  Our argument is much simpler here due to the fact that $\dim M = n$ so we may use the degree counting argument, whereas $\BO(n)$ is infinite-dimensional.
\end{rem}

\begin{thm}
  The morphism $\omega$ above defines a quasi-isomorphism of homotopy Hopf cooperads under the dgca map $A\to \OmPA(M)$:
  \[ \omega : \stG_{n}^{M} \xrightarrow{\sim} \OmPA(\FM_{n}^{M}). \]
\end{thm}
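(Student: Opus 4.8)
The plan is to verify first that $\omega$ is a well-defined morphism of Hopf cooperads, then to reduce the quasi-isomorphism statement to the individual arities, and finally to identify $\omega$ arity by arity with the map of homotopy pushouts induced by three quasi-isomorphisms that are already available. Well-definedness of $\omega$ is checked as for $\stG_{n}$ in \cite{Kontsevich1999,LambrechtsVolic2014} and for $\stG_{M}$ in \cite{CamposWillwacher2016}: a product of graphs corresponds to a product of integration domains, collapsing a subgraph to restricting a fiber integral along a boundary stratum of $\FM_{n}^{M}$, and the rest is Fubini for integration along fibers. Compatibility with the differential is checked summand by summand in $d_{A}+\delta_{\mathrm{contr}}+ET\cdot$: the $d_{A}$-part is immediate; the $\delta_{\mathrm{contr}}$-part is the usual Stokes argument (the codimension-one faces of $\FM_{n}^{M}(r+s)\to\FM_{n}^{M}(r)$ along which internal points collide correspond to edge contractions); and the $ET\cdot$-part is produced because $d\varphi$ equals the pullback of the Euler form $E\in A$, so that applying $d$ to a propagator edge and using Stokes on the remaining faces yields an edge deletion decorated by $E$ (for $n$ odd one has $d\varphi=0$ and $E=0$, so this term is absent). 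The identification of a closed internal component $\gamma$ with $z(\gamma)$ is consistent precisely because the Lemma above shows $z(\gamma)\in A$ (indeed $z(\gamma)=0$ unless $\gamma=\tadp$, where $z(\tadp)=E$). Since a morphism of homotopy Hopf cooperads is a weak equivalence exactly when it is a quasi-isomorphism in each arity (Section~\ref{sec:htpyoperads}), it remains to prove that $\omega\colon\stG_{n}^{M}(r)\to\OmPA^{*}(\FM_{n}^{M}(r))$ is a quasi-isomorphism for each fixed $r$.

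On the algebraic side, $\stG_{n}^{M}(r)=A\otimes_{H(\BG)}\pdu\BGraphs_{n}^{m}(r)$ is by definition the pushout of $A\hookleftarrow H(\BG)\to\pdu\BGraphs_{n}^{m}(r)$, and since $\pdu\BGraphs_{n}^{m}(r)$ is free as a graded $H(\BG)$-module (it is $\stG_{n}(r)\otimes H(\BG)$), base change along $H(\BG)\to A$ preserves quasi-isomorphisms, so this pushout computes the homotopy pushout $A\otimes^{\mathbb L}_{H(\BG)}\pdu\BGraphs_{n}^{m}(r)$. On the topological side, $\FM_{n}^{M}(r)=\FM_{n}(r)\times_{G}\Fr_{M}$ is the homotopy pullback of $M\to BG\leftarrow\FM_{n}(r)\sslash G$ recalled in Section~\ref{sec:graph-model-fiber}. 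Since $BG=B\SO(n)$ is simply connected and all spaces involved have cohomology of finite type, the pullback-to-pushout principle \cite[Theorem~2.4]{Hess2007}, \cite[Proposition~15.8]{FHT2015} applies and shows that $\OmPA^{*}(\FM_{n}^{M}(r))$ is weakly equivalent to the homotopy pushout $\OmPA^{*}(M)\otimes^{\mathbb L}_{B_{G}}\Omega^{s}_{G}(\FM_{n}(r))$ of $\OmPA^{*}(M)\leftarrow B_{G}\to\Omega^{s}_{G}(\FM_{n}(r))$, where for the two infinite-dimensional corners we use the simplicial-forms models $B_{G}$ for $BG$ and $\Omega^{s}_{G}(\FM_{n}(r))$ for $\FM_{n}(r)\sslash G$ of \cite[Section~4]{KhoroshkinWillwacher2017}.

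Next I would map the algebraic pushout square to the topological one via $A\xrightarrow{\sim}\OmPA^{*}(M)$ from \eqref{eq:8}, $H(\BG)\xrightarrow{\sim}B_{G}$ recalled in Section~\ref{sec:equiv-graph-models}, and $\omega_{\mathrm{equivar}}\colon\pdu\BGraphs_{n}^{m}(r)\xrightarrow{\sim}\Omega^{s}_{G}(\FM_{n}(r))$ of \eqref{eq:omega-equivar}, choosing the base maps compatibly (the classifying map $M\to BG$ is modeled by $H(\BG)\to A$, sending Pontryagin and Euler generators to the representatives used around \eqref{eq:euler-class}). As these three maps are quasi-isomorphisms and derived tensor products preserve quasi-isomorphisms in each variable, the induced map of homotopy pushouts
\[ \stG_{n}^{M}(r)=A\otimes^{\mathbb L}_{H(\BG)}\pdu\BGraphs_{n}^{m}(r)\;\longrightarrow\;\OmPA^{*}(M)\otimes^{\mathbb L}_{B_{G}}\Omega^{s}_{G}(\FM_{n}(r))\;\simeq\;\OmPA^{*}(\FM_{n}^{M}(r)) \]
is a quasi-isomorphism. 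It then remains to observe that this induced map is, up to homotopy and the identifications above, exactly $\omega$: this is arranged by taking the fiberwise form $\varphi\in\OmPA^{n-1}(\FM_{n}^{M}(2))$ to be obtained from the $\SO(n)$-equivariant propagator of \cite[Appendix~A]{KhoroshkinWillwacher2017} through the associated-bundle construction $\FM_{n}(2)\times_{G}\Fr_{M}$, so that the integrals along fibers of $\FM_{n}^{M}(r+s)\to\FM_{n}^{M}(r)$ defining $\omega$ restrict, over each point of $M$, to the equivariant integrals defining $\omega_{\mathrm{equivar}}$. With these choices the comparison cube commutes up to homotopy, so $\omega$ is a quasi-isomorphism and, by the arity-wise criterion, the theorem follows.

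The substance of the argument is this last step: showing that the explicit Feynman-rules morphism $\omega$ agrees up to homotopy with the abstract map of homotopy pushouts. Unwinding it amounts to three bookkeeping points, none of which needs a new idea: (i) the fiberwise propagator can be, and is, chosen as the associated-bundle image of the equivariant propagator of \cite{KhoroshkinWillwacher2017}, so that the two families of fiber integrals literally match; (ii) the twist $ET\cdot$ in the differential of $\stG_{n}^{M}$ corresponds under $H(\BG)\to A$ to the Maurer--Cartan twist by $m$, which is the content of Theorem~\ref{thm:MCform} (for $n$ even, $m$ is gauge-equivalent to $E\tadp$ with $E\in A$ the representative of \eqref{eq:euler-class}; for $n$ odd the gauge-fixed $m$ involves the top Pontryagin class $p_{2n-2}$, but $\deg p_{2n-2}=2n-2>n=\dim M$, so $p_{2n-2}(M)=0$ and its representative in $A$ may be taken to be zero, consistently with $E=0$); and (iii) the partition function $z$ takes values in the image of $A\to\OmPA^{*}(M)$ --- exactly the Lemma above (also a consequence of \cite[Theorem~7.1]{KhoroshkinWillwacher2017}). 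Each of these is either proved in \cite{KhoroshkinWillwacher2017} or a direct Stokes/degree computation, so the only genuine content beyond that reference is the homotopy-pushout packaging, whose correctness hinges on the simple connectivity of $BG$ that makes the pullback-to-pushout principle applicable.
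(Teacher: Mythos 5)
Your proof is correct and takes essentially the same route as the paper: compatibility of $\omega$ with the differential, the products, the cooperad structure and the identification of internal components with $z(\gamma)$ is checked by the standard Stokes/fiber-integration arguments, and the quasi-isomorphism property is deduced from the pullback-to-pushout principle applied to the defining homotopy pullback square of $\FM_{n}^{M}$ over $BG$. Your additional care in identifying $\omega$ with the induced map of homotopy pushouts (compatible choice of the fiberwise propagator coming from the equivariant propagator, and of the model $H(BG)\to A$ of the classifying map) only makes explicit what the paper compresses into the final sentence of its proof.
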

\begin{proof}
  Checking that $\omega$ commutes with all the structures involved is done by arguments very similar to the ones found in~\cite{LambrechtsVolic2014,CamposWillwacher2016,Idrissi2016}, using theorems of~\cite{HardtLambrechtsTurchinVolic2011}.
  It is compatible with:
  \begin{itemize}
    \item the differential, by the Stokes formula~\cite[Proposition~8.3]{HardtLambrechtsTurchinVolic2011} and the additivity of integration along fibers~\cite[Proposition~8.11]{HardtLambrechtsTurchinVolic2011}; in the Stokes formula for integrations along fibers $F$,
          \begin{equation}
            d \int_F \omega = \int_F d\omega + \int_{\partial F} \omega,
          \end{equation}
          the first summand corresponds to the tadpole action, and the second summand is the integral along the fiberwise boundary, which splits as several summands that exactly correspond to edge contractions (see the analogous argument in \cite[Proposition 9.4.1]{LambrechtsVolic2014} and note that we use Lemma~\ref{lem:z});
    \item the products, by the multiplicative property of integration along fibers \cite[Proposition~8.15]{HardtLambrechtsTurchinVolic2011};
    \item the cooperad structure, by an immediate check on the dgca generators;
    \item the identification of internal components $\gamma$ with $0$, by the multiplicative property of integral along fibers and the double pushforward formula~\cite[Proposition~8.13]{HardtLambrechtsTurchinVolic2011} (the argument is the same as the one of \cite[Lemma 9.3.7]{LambrechtsVolic2014} by a dimension argument).
  \end{itemize}
  Finally, it is a quasi-isomorphism by the fact that the model of the homotopy pullback is the homotopy pushout of the models (and since the maps we consider are (co)fibrations then we can remove the adjective ``homotopy'').
\end{proof}

\begin{thm}
  \label{thm:graphs-m-model}
  The symmetric sequence $\stG_{M}$ is a Hopf right $\stG_{n}^{M}$-comodule, and we have a quasi-isomorphism of homotopy Hopf right comultimodules:
  \[ (\omega,\omega) : (\stG_{M}, \stG_{n}^{M}) \xrightarrow{\sim} (\OmPA(\FM_{M}), \OmPA(\FM_{n}^{M})). \]
\end{thm}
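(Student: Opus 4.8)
The plan is threefold: (i) define the Hopf right $\stG_{n}^{M}$-comodule structure on $\stG_{M}$ by subgraph collapsing; (ii) verify that the pair $(\omega,\omega)$ intertwines this structure with the topological right operadic multimodule structure of $(\FM_{M},\FM_{n}^{M})$; and (iii) conclude that it is a quasi-isomorphism because both components are. Recall the concrete description $\stG_{n}^{M}(s)\cong (A\otimes\stG_{n}(s),\, d_{A}+\delta_{\mathrm{contr}}+ET\cdot)$, and that $\stG_{M}(r)$ carries $r$ commuting $A$-module structures by decorating its external vertices, which are the algebraic incarnations of the projections of Equation~\eqref{equ:projections} (cf.\ Equation~\eqref{eq:9}). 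The coaction
\[
  \stG_{M}(r+s-1)\longrightarrow \stG_{M}(r)\otimes_{A}\stG_{n}^{M}(s),
\]
where the tensor product is formed using the $i$-th $A$-module structure on the left factor and the tautological one on $A\otimes\stG_{n}(s)$, sends a graph $\Gamma$ to the sum over all ways of choosing a subgraph $\gamma\subseteq\Gamma$ spanning the $s$ external vertices to be grouped, together with some internal vertices; the summand is $(\Gamma/\gamma)\otimes\gamma$, where $\Gamma/\gamma\in\stG_{M}(r)$ is obtained by collapsing $\gamma$ to a single new external vertex $i$ carrying the product of all $\tilde{H}^{*}(M)$-decorations of $\gamma$, and $\gamma\in\stG_{n}^{M}(s)$ is the subgraph with its vertex decorations forgotten. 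Internal components created inside $\gamma$ are evaluated by the partition function $z$; by the Lemma above they vanish unless the component is a tadpole, in which case they contribute the Euler class $E\in A$.

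The verifications of this coaction are carried out exactly as in the framed case of~\cite{CamposWillwacher2016}: well-definedness against the $Z_{M}$- and $z$-identifications of internal components, compatibility with $\delta_{\mathrm{contr}}$, coassociativity, compatibility with the cooperadic cocomposition of $\stG_{n}^{M}$ and with the $A$-(co)module structures (since collapsing nested subgraphs commutes and the $\stG_{n}^{M}$-part never carries decorations on its vertices), and the structure on forests of marked trees as in Section~\ref{sec:htpyoperads}. The one genuinely new point — and the conceptual heart of the statement — is that, contrary to the plain $\stG_{n}$-coaction of~\cite{CamposWillwacher2016}, \emph{no} hypothesis on the Euler class is needed here: applying $\delta_{\mathrm{cut}}$ to an edge internal to $\gamma$ inserts the self-contraction $\sum_{i}(-1)^{|e_{i}|}e_{i}e_{i}^{*}$ of Equation~\eqref{eq:euler-class} into the $A$-factor of $\stG_{n}^{M}(s)$, which is precisely the term $ET\cdot$ in the differential of $\stG_{n}^{M}$; cutting an edge crossing the boundary of $\gamma$, respectively an exterior edge, reproduces $\delta_{\mathrm{cut}}$ of $\stG_{M}(r)$. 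Thus the non-vanishing Euler class, which obstructs the $\stG_{n}$-coaction, is exactly what is absorbed into $\stG_{n}^{M}$.

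For the compatibility of $(\omega,\omega)$ we choose the propagators compatibly: the fiberwise propagator $\varphi\in\OmPA^{n-1}(\FM_{n}^{M}(2))$ on the tangent sphere bundle is taken to be the restriction of the $\FM_{M}$-propagator $\varphi_{M}\in\OmPA^{n-1}(\FM_{M}(2))$ to the collision stratum $\partial\FM_{M}(2)$, which \emph{is} that sphere bundle; this is legitimate because the defining properties of $\varphi_{M}$ recalled in Section~\ref{sec:graph-models-fm_m} state precisely that this restriction is a global angular form and that $d\varphi_{M}$ is the pullback of $\Delta_{M}$, so its restriction has differential the Euler class (or is closed, for $n$ odd), as required of $\varphi$. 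With this choice, one pulls back $\omega(\Gamma)$ for $\Gamma\in\stG_{M}(r+s-1)$ along the insertion map $\circ_{i}\colon\FM_{M}(r)\times^{i}_{M}\FM_{n}^{M}(s)\to\FM_{M}(r+s-1)$ and factors the fiber integral by Fubini and the multiplicativity of integration along fibers~\cite[Prop.~8.13, 8.15]{HardtLambrechtsTurchinVolic2011}: an edge with both endpoints outside the collapsed block contributes $p_{ab}^{*}\varphi_{M}$ on $\FM_{M}(r)$, an edge crossing the boundary of the block contributes $p_{ai}^{*}\varphi_{M}$ between the old vertex $a$ and the new external vertex $i$, and an edge internal to the block contributes $p_{ab}^{*}\varphi$ on $\FM_{n}^{M}(s)$ — exactly the Feynman image of the graph produced by the coaction. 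Pulled-back decorations restrict, on the collision stratum, to their $p_{i}$-pullbacks, matching the transfer of decorations onto the vertex $i$, and the double pushforward formula~\cite[Prop.~8.13]{HardtLambrechtsTurchinVolic2011} matches the identification of internal components with $z$. Compatibility of $(\omega,\omega)$ with differentials, products and the cooperadic structure is checked by the same Stokes and pushforward arguments as in~\cite{CamposWillwacher2016,KhoroshkinWillwacher2017} and in the proof of the preceding theorem.

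Finally $(\omega,\omega)$ is a quasi-isomorphism of (homotopy) Hopf right comultimodules: $\omega\colon\stG_{M}(k)\to\OmPA^{*}(\FM_{M}(k))$ is one for every $k$ by the theorem of~\cite{CamposWillwacher2016} recalled above, and $\omega\colon\stG_{n}^{M}\to\OmPA^{*}(\FM_{n}^{M})$ is one by the theorem just proved; since weak equivalences of homotopy Hopf right comultimodules are declared to be the objectwise ones, the pair is a weak equivalence. The main obstacle is step (ii): one must make the ``over $A$'' tensor product on the algebraic side correspond to the ``over $M$'' fiber product on the topological side, which forces the compatible choice of $\varphi$ and $\varphi_{M}$ above and requires the precise list of properties of the Campos--Willwacher propagator — in particular that its restriction to the collision stratum has fiber integral $1$ — in order for the Feynman integrals to genuinely factor through $\FM_{M}(r)\times^{i}_{M}\FM_{n}^{M}(s)$ under $\circ_{i}$.
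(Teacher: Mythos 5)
Your proposal is correct and follows essentially the same route as the paper, whose proof is only a terse statement that the coaction $\circ_i^*\colon \stG_M(r+s-1)\to\stG_M(r)\otimes_A\stG_n^M(s)$ is defined by subgraph contraction and that compatibility with the differentials and with $\omega$ is a direct extension of the main theorem of \cite{CamposWillwacher2016}. You simply spell out the details the paper leaves implicit (the Euler-class term $ET\cdot$ absorbing the cut differential inside the collapsed block, and the choice of fiberwise propagator as the restriction of $\varphi_M$ to the collision stratum), which is consistent with the intended argument.
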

\begin{proof}
  The proof is a direct extension of the proof of the main theorem of~\cite{CamposWillwacher2016} (see \cite[Proposition~13 and Lemma~16]{CamposWillwacher2016}).
  We can define maps
  \begin{equation}
    \label{eq:14}
    \circ_{i}^{*} : \stG_{M}(r+s-1) \to \stG_{M}(r) \otimes_{A} \stG_{n}^{M}(s)
  \end{equation}
  in a straightforward way, using subgraph contraction.
  The fact that these maps $\circ_i^*$ commute with $\omega$ is immediate from the definitions.
  Checking that these maps $\circ_i^*$ commute with the differential is a straightforward computation using the explicit description of the differential (compare with \cite[Lemma~16]{CamposWillwacher2016}).
  Note that while tadpoles are removed from the final version of $\Graphs_M$ in \cite{CamposWillwacher2016}, the compatibility with the differential on tadpoles is proved in \cite[Proposition~14]{CamposWillwacher2016}.
\end{proof}

\section{The framed configuration module}
\label{sec:fram-conf-module}

In this section we now give the model for $\FM_{M}^{\fr}$ seen as a right $\FM_{n}^{\fr}$ module.
We first give a general ``framing'' construction that we will then specialize to the case $\FM_{M}^{\fr}$.

\subsection{Constructions for right multimodules}
\label{sec:constr-right-mult}
Let $X$ be a topological space and $\op P$ be an operad in spaces over $X$. Let $\op M$ be a right $\op P$-multimodule.
Suppose that $f:Y\to X$ is some map of topological spaces.
Then we may define the pullback $Y\times_f \op P$ such that $(Y\times_f \op P)(r) = Y\times_f \op P(r)$. It is an operad in spaces over $Y$.
Similarly, the pullback $Y\times_f \op M$, defined such that $(Y\times_f \op M)(r):= Y^r \times_{f^r} \op M(r)$, is a right $Y \times_f \op P$-multimodule.
Let $\phi: \op Q\to \op P$ be a map of operads in spaces over $X$. Then $\op M$ can be naturally made into a right $\op Q$-multimodule, which we shall denote by $\phi^*\op M$ (accepting a slight clash in notation).

Next, suppose that $\op P=X\times \op R$, where $\op R$ is an ordinary topological operad.
Then $\op M$ can be made into a right $\op R$-module $\op M|_{\op R}$, via $\op M|_{\op R} \circ \op R = \op M \circ_X \op P \to \op M$.

\subsubsection{Framing construction}

Consider now the following input data:
\begin{enumerate}
  \item An operad $\op P$ in spaces over $X$ as above;
  \item A right $\op P$-multimodule $\op M$;
  \item A bundle over $X$, $f: F\to X$;
  \item A trivializing morphism:
        \beq{equ:trivmorphism}
        \phi: F\times \op R\to F\times_f \op P
        \eeq
        of operads in spaces over $F$, where $\op R$ is an ordinary operad.
\end{enumerate}

To these input data we associate the right operadic $\op R$-module
\newcommand{\Fra}{\mathrm{Fra}}
\[
  \Fra'_{\op P,\op M,f,\phi}
  :=
  (\phi^* (F\times_f \op M) )|_{\op R}.
\]

It is clear that the construction is functorial in the input data.
Let $G$ be a topological group.
If, in addition, $\op R$ is an operad in $G$-spaces (i.e.\ each space $\op R(r)$ carries an action of $G$ and all operadic maps are $G$-equivariant in an appropriate sense), and if the bundle $F$ carries a fiberwise $G$-action such that $\phi$ is $G$-equivariant, then $\Fra'_{\op P,\op M,f,\phi}$ is a right $\op R$-module in $G$-spaces. By this we mean (abusively) that $\Fra'_{\op P,\op M,f,\phi}(r)$ carries an action of $G^r$ such that the composition morphisms are $G$-equivariant in a natural sense.
This implies in particular that $\Fra'_{\op P,\op M,f,\phi}$ carries a right action of the framed operad $\op R\circ G$.

Now, suppose that the above input data is such that the bundle $F\to X$ is a principal $G$ bundle and $\op P = F\times_G \op R$.
Then there is a natural trivializing morphism $\phi: F\times \op R\to F\times_f \op P$, defined by:
\[
  \phi(a,b) = (a, (a\times_G b)).
\]
It is furthermore $G$-equivariant.
In this special case, we denote the right $\op R$-multimodule $\Fra'_{\op P,\op M,f,\phi}$ alternatively by
\[
  \Fra_{\op R, F, \op M} \coloneqq \Fra'_{F \times_{G} \op R, \op M, f, \phi}.
\]

\begin{ex}
  The example to keep in mind is $X = M$, $\op R = \FM_{n}$ and $\op M = \FM_{M}$, $F=\Fr_{M}$.
  Then we have that $\op P = F \times_{G} \op R = \FM_{n}^{M}$, and $\Fra_{\op R, F, \op M} = \FM_{M}^{\fr}$.
\end{ex}

\subsubsection{Functoriality} \label{sec:Frafunctoriality}

Let us observe that the constructions $\Fra'_{\op P,\op M, f,\phi}$ and $\Fra_{\op R, F, \op M}$ depend functorially on the data.
Indeed, suppose that we have two tuples $(\op P,\op M, f,\phi)$, $(\op P',\op M', f',\phi')$ as above. Suppose moreover that we have morphisms:
\begin{align*}
  \alpha: \op P & \to \op P'
                &
  \beta: \op R  & \to \op R'
                &
  \gamma: \op M & \to \op M',
\end{align*}
and a morphism of bundles $\delta:F\to F'$ from the bundle $f:F\to X$ to $f':F'\to X$.
Finally, suppose that our morphisms respect the naturally given structure on objects.
In particular, they make the following diagrams commute:
\beq{equ:functoriality_tocheck}
\begin{tikzcd}
  \op M \ar{d}{\gamma} & \op P \ar{d}{\alpha} \ar[dashed]{l} \\
  \op M' & \op P' \ar[dashed]{l}
\end{tikzcd}
\quad\quad
\begin{tikzcd}
  F \times_f  \op P \ar{d}{ \delta \times_f\alpha} &  F \times \op R \ar{d}{\delta\times\beta } \ar{l}{\phi} \\
  F' \times_{f'}  \op P' &  F'  \times \op R' \ar{l}{\phi'}.
\end{tikzcd}
\eeq
In this diagram, we have indicated operadic (right) actions by dashed arrows.
It is then clear that the maps given provide a morphism of operads and their right modules:
\[
  \begin{tikzcd}
    \Fra'_{\op P,\op M,f,\phi}  \ar{d}{\delta\times_X\gamma } & \op R \ar{d}{\beta} \ar[dashed]{l} \\
    \Fra'_{\op P',\op M',f',\phi'}  & \op R' \ar[dashed]{l}.
  \end{tikzcd}
\]
Furthermore, if the maps $\beta$ and $\delta$ above are compatible with the $G$-action, then the operads $\op R$ and $\op R'$ on the right-hand side of the above diagram may be replaced by their $G$-framed versions.
If $\alpha, \beta, \gamma, \delta$ are weak equivalences, then so is the induced map of operadic right modules above.
(Note that we have used that $f, f'$ are fiber bundles and therefore fibrations.)

Let us also state a slightly relaxed version of the above functoriality result.
Suppose next that we have maps $\beta, \gamma, \delta$ as above, and in addition two homotopic maps of operads over $X$:
\[
  \begin{tikzcd}
    \op P \ar[shift left]{r}{\alpha_0} \ar[shift right, swap]{r}{\alpha_1} & \op P',
  \end{tikzcd}
\]
such that the following diagrams commute:
\beq{equ:functoriality_tocheck2}
\begin{tikzcd}
  \op M \ar{d}{\gamma} & \op P \ar{d}{\alpha_0} \ar[dashed]{l} \\
  \op M' & \op P' \ar[dashed]{l}
\end{tikzcd}
\quad\quad
\begin{tikzcd}
  F \times_f \op P  \ar{d}{\delta\times_f \alpha_1} &F  \times \op R \ar{d}{\delta\times \beta} \ar{l}{\phi}\\
  F' \times_{f'} \op P'  & F' \times \op R' \ar{l}{\phi'}.
\end{tikzcd}
\eeq

We claim that still we have a (homotopy) morphism $\Fra'_{\op P,\op M,f,\phi}\to  \Fra'_{\op P',\op M',f',\phi'}$.
Concretely, suppose that the homotopy is realized by a path in the mapping space:
\[
  \alpha: I \times \op P \to \op P',
\]
with $I=[0,1]$, whose endpoints agree with $\alpha_0, \alpha_1$ respectively.
Let us define the bundle $f_I: F_I:= I\times F\to X$ by trivial extension of $F$.
We will also define the trivializing morphism (of operads over $F_I$)
\[
  \phi_I :  F_I \times \op R \to  F_I \times_{f_I} \op P'
\]
as the composition
\[
  F_I \times \op R \cong  I  \times F\times \op R
  \xrightarrow{\mathit{id}_I \times \phi}
  I\times F \times_f \op P \to  (I\times F)\times_{f_I} (I\times \op P)
  \xrightarrow{\mathit{id} \times_{f_I} \alpha}
  F_I  \times_{f_I}\op P'.
\]
Here we used the diagonal $I\to I\times I$ for the middle arrow.
Furthermore, let us define the map
\[
  \tilde \phi :  F \times\op R \to   F \times_{f}\op P'
\]
as the composition
\[
  F \times \op R
  \xrightarrow{\phi} F \times_f \op P
  \xrightarrow{\mathit{id} \times_f \alpha_1} F \times_f \op P'.
\]

We then build the following zigzag:
\beq{equ:htpydiagram}
\begin{tikzcd}
  \Fra'_{\op P, \op M, f, \phi} \ar{d}{\rho_0} & \op R \ar[dashed]{l} \ar{d}{=}\\
  \Fra'_{\op P', \op M', f_I, \phi_I} & \op R \ar[dashed]{l} \\
  \Fra'_{\op P', \op M', f, \tilde \phi} \ar{d}{\rho_2} \ar{u}{\rho_1}[swap]{\sim}& \op R \ar[dashed]{l} \ar{d}{\beta} \ar{u}{=} \\
  \Fra'_{\op P', \op M', f', \phi'} & \op R' \ar[dashed]{l}
\end{tikzcd}.
\eeq
Let us explain the construction of the vertical maps, which are all obtained from maps on the input data of $\Fra'$ and functoriality.
The top vertical map on the left $\rho_0$ is obtained by the maps $\alpha_0:\op P \to \op P'$, $\gamma:\op M\to \op M'$ and the map $\iota_0:F\to F_I = I\times F$ sending an element $u\in F$ to $(0,u)$.
We use here that the following diagrams commute, which is evident by construction:
\[
  \begin{tikzcd}
    \op M \ar{d}{\gamma} & \op P \ar{d}{\alpha_0} \ar[dashed]{l} \\
    \op M' & \op P' \ar[dashed]{l}
  \end{tikzcd}
  \quad\quad
  \begin{tikzcd}
    F\times_f\op P  \ar{d}{\iota_0 \times_f \alpha_0} &   F\times\op R \ar{d}{ \iota_0 \times\mathit{id}}  \ar{l}{\phi} \\
    F_I\times_{f_I} \op P' &  F_I \times \op R \ar{l}{\phi_I} .
  \end{tikzcd}
\]
Similarly, the map $\rho_1$ is induced by the maps $\iota_1:F\to F_I = I\times F$ sending an element $u\in F$ to $(1,u)$.
One readily checks that the relevant diagrams commute:
\[
  \begin{tikzcd}
    \op M' \ar{d}{=} & \op P' \ar{d}{=} \ar[dashed]{l} \\
    \op M' & \op P' \ar[dashed]{l}
  \end{tikzcd}
  \quad\quad
  \begin{tikzcd}
    F\times_f \op P' \ar{d}{\iota_1\times_f \mathit{id}} &  F\times \op R \ar{d}{ \iota_1 \times\mathit{id}}  \ar{l}{\tilde \phi} \\
    F_I \times_{f_I} \op P' &  F_I\times \op R \ar{l}{\phi_I} .
  \end{tikzcd}
\]
Finally, the map $\rho_2$ is defined by functoriality of $\Fra'$ and the maps on input data $\beta:\op R\to \op R'$ and $\delta:F\to F'$. One again checks that the relevant diagrams
\[
  \begin{tikzcd}
    \op M' \ar{d}{=} & \op P' \ar{d}{=} \ar[dashed]{l} \\
    \op M' & \op P' \ar[dashed]{l}
  \end{tikzcd}
  \quad\quad
  \begin{tikzcd}
    F\times_f\op P' \ar{d}{\delta\times_f \mathit{id}} & F \times \op R \ar{d}{ \delta \times \beta}  \ar{l}{\tilde \phi} \\
    F' \times_{f'} \op P' & F' \times \op R' \ar{l}{\phi'} .
  \end{tikzcd}
\]
commute. Overall, we have constructed a zigzag (the LHS of Equation~\eqref{equ:htpydiagram}), in which the only arrow pointing in the upward direction is a weak equivalence. In other words, we have constructed a homotopy morphism $\Fra'_{\op P,\op M,f,\phi}\to  \Fra'_{\op P',\op M',f',\phi'}$, as desired. If the maps on the input data are weak equivalences, then so are the maps in our zigzag.

Furthermore, if all data respect the $G$-actions, then we may pass to modules over the $G$-framed operads.

The constructions above readily extend to homotopy operads and modules, and dualize to (homotopy) Hopf cooperads and comodules.

\subsection{The framed configuration module $\FM_{M}^{\fr}$}

Using the above general construction we can now define the framed configuration module of $M$ as
\[
  \FFM_{M} \coloneqq \Fra_{\FM_n, \Fr_M, \FM_M}.
\]
More concretely, we have
\begin{equation}
  \FFM_{M}(r) = (\FM_{M} \circ_{M} \Fr_{M})(r) = \FM_{M}(r) \times_{M^{r}} \Fr_{M}^{r},
\end{equation}
where the fiber product is defined using the maps $p_1,\dots,p_r$ of Equation~\eqref{equ:projections}.

By construction, $\FFM_M$ carries a natural operadic right action of the framed Fulton--MacPherson operad $\FFM_n$.
In particular, $\FFM_M(r)$ carries a natural $G^r$-action, i.e., we have compatible maps:
\begin{equation}
  \label{equ:gaction}
  \FFM_M \circ G \to \FFM_M,
\end{equation}
and an operadic right $\FM_{n}$-module structure.
The composition morphisms implicit in the construction $\Fra_{\FM_n, \Fr_M, \FM_M}$ are given explicitly as part of the following composition:
\begin{multline}
  \label{eq:ffm-m-comodule}
  \FFM_M \circ \FM_n
  =
  \FM_M \circ_M \Fr_M \circ \FM_n
  \xrightarrow{\Delta}
  \FM_M \circ_M \Fr_M \circ \FM_n \circ_M \Fr_M
  \xrightarrow{\pi}
  \\
  \xrightarrow{\pi}
  \FM_M \circ_M \FM_n^M \circ_M \Fr_M
  \to
  \FM_M \circ_M \Fr_M
  = \FFM_M,
\end{multline}
where $\Delta$ is the diagonal map of $\Fr_{M}$ and $\pi : \Fr_{M} \times \FM_{n} \to \FM_{n}^{M}$ is the projection.

\subsection{The graphical model $\stG_{M}^{\fr}$}
Let us first define our (at this point of the paper tentative) graphical model for $\FFM_{M}$.
In the next subsection, we will relate this graphical model to the forms on (a version of) $\FFM_{M}$ by an explicit zigzag of quasi-isomorphisms of homotopy Hopf comodules.

By definition, the space $\FFM_{M}(r)$ fits into the following pullback diagram:
\begin{equation}
  \begin{tikzcd}
    \FFM_M(r) \ar[r] \ar[d] \ar[dr, phantom, "\lrcorner" near start] & \FM_M(r) \arrow{d} \\
    (\Fr_M)^{\times r} \arrow{r} & M^{\times r}
  \end{tikzcd}
\end{equation}

Recall the model $\Fr_{M}^{W,\mathrm{alg}} = (A \otimes \hat H(G), d)$ of $\Fr_{M}$ obtained in Section~\ref{sec:model-frame-bundle}.
We can take as an algebraic model for $\FFM_{M}(r)$ the following dgca:
\begin{equation}
  \begin{aligned}
    \stG_M^{\fr}(r) & \coloneqq (A \otimes \hat H(G), d)^{\otimes r} \otimes_{A^{\otimes r}} \stG_M(r) \\
                    & \cong \bigl( \stG_M\circ_{A}(A\otimes \hat H(G), d) \bigr)(r)                    \\
                    & \cong \bigl( \stG_M\circ \hat H(G)(r), d \bigr).
  \end{aligned}
\end{equation}

Note that some care has to be taken.
One cannot immediately apply the pullback-to-pushout lemma, since the base $M^r$ is not necessarily simply connected.
We may however still conclude that $\stG_M^{\fr}(r)$ is a dgca model for $\FFM_M(r)$ since $\FFM_M(r)\to \FM_M(r)$ is a $G^r$-principal bundle.
Hence, one can explicitly write down a model given a model of the base as in \cite[Theorem 1, section 9.3]{GHV1976}, which in this case agrees with $\stG_M^{\fr}(r)$.

We can now define the $\stG_{n}$-comodule structure on $\stG_{M}^{\fr}$ by writing a diagram which is dual to Equation~\eqref{eq:ffm-m-comodule}:
\begin{multline}
  \stG_M^{\fr} = (\stG_M\circ \hat H(G), d)
  \to \\
  \to
  \stG_M \circ_A (A\otimes \stG_n, d) \circ_A (A\otimes \hat H(G), d)
  \xrightarrow{\pi^{*}}
  \\
  \xrightarrow{\pi^{*}}
  \stG_M \circ_A((A \otimes \hat H(G), d) \otimes \stG_n) \circ_A (A\otimes \hat H(G), d)
  \xrightarrow{\operatorname{mult}_{\Fr_{M}^{W,\mathrm{alg}}}}
  \\
  \xrightarrow{\operatorname{mult}_{\Fr_{M}^{W,\mathrm{alg}}}}
  (\stG_M\circ \hat H(G), d) \circ \stG_n = \stG_{M}^{\fr} \circ \stG_{n}.
\end{multline}
Here, the map $\operatorname{mult}_{\Fr_{M}^{\mathrm{alg}}}$ is the product of the dgca $\Fr_{M}^{\mathrm{alg}}$ (i.e., the dual of the diagonal map), and $\pi^{*} : \stG_{n}^{M} \to (A \otimes \hat H(G), d) \otimes \stG$ is the map defined by:
\begin{equation}
  \label{eq:pi-star}
  \begin{aligned}
    \pi^{*} : (A \otimes \stG_{n}, d)
     & \to (A \otimes \hat H(G), d) \otimes \stG_{n} \\
    a \otimes \Gamma
     & \mapsto \sum a \otimes b' \otimes \Gamma'',
  \end{aligned}
\end{equation}
where we use Sweedler notation $\Gamma\mapsto \sum b' \otimes \Gamma''$ to describe the $\hat H(G)$-coaction on $\stG_n$ of Section \ref{sec:equiv-graph-models}.
It can be seen from the discussion below
that $\pi^*$ is in fact the model of the projection $\pi : \Fr_{M} \times \FM_{n} \to \FM_{n}^{M}$.

In addition to the coaction of the commutative Hopf algebra $\hat H(G)$ on $\Graphs_n$, we have compatible coactions of $\hat H(G)^r$ on $\stG_M^{\fr}(r)$.
We can therefore pass to the framed Hopf cooperad $\Graphs_n^{\fr}=\Graphs_n\circ \hat H(G)$, which inherits a coaction of $\stG_M^{\fr}$.

In conclusion, our (tentative) graphical model for the pair ($\FM_M^{\fr}$, $\FM_n^{\fr}$) is the pair ($\stG_M^{\fr}$, $\Graphs_n^{\fr}$).
Note that our graphical model is an honest Hopf comodule/cooperad, not just up to homotopy.

\subsection{The zigzag}
\label{sec:zigzag}
Our next goal is to prove that the tentative model built in the previous subsection is indeed a model of the pair $(\FM_M^{\fr}, \FM_n^{\fr})$.
We will do this by going through the steps of the construction $\FM_M^{\fr} = \Fra_{\FM_n,\Fr_M,\FM_M}$ from Section~\ref{sec:constr-right-mult}.
Recall that the input of that construction is the data of:
\begin{enumerate}
  \item The operad $\FM_n$ in $G=\SO(n)$-spaces;
  \item The principal $G$-bundle $\Fr_M\to M$;
  \item The right $\FM_n^M=\Fr_M\times_G \FM_n$-multimodule $\FM_M$;
  \item The trivializing morphism $\varphi : \Fr_{M} \times \FM_{n} \to \Fr_{M} \times_M \FM_{n}^{M}$.
\end{enumerate}

We will use the following combinatorial models for the above objects.
Let us first describe the first three:
\begin{enumerate}
  \item We will use two different models for the operad $\FM_n$ in $G$-spaces.

        First, recall that $\hat{B}_{n} = (\hat H(G)\otimes \BGraphs_n^m, d)$ defined in \eqref{eq:definition of Bn} is a complete Hopf cooperad with an $\hat H(G)$-coaction.
        This complete Hopf cooperad is a model of $\FM_n$ (or rather the homotopy equivalent realization of $WG\times G^\bullet \times \FM_n$) via the map:
        \[
          \hat{B}_{n} \to \Tot(\modo_{A_G}(\widehat{\FM}_{n})).
        \]
        We refer to \cite{KhoroshkinWillwacher2017} or Section \ref{sec:equiv-graph-models}.
        The map is a quasi-isomorphism of complete cooperads in homotopy $A_G$-comodules (where $A_G = W\OmPA(G)$).

        Second, we may take as a another model the Hopf cooperad $\Graphs_n$ with the $\hat H(G)$-coaction of Section \ref{sec:equiv-graph-models}.
        In contrast to the former model, this is an honest dg Hopf cooperad in $\hat H(G)$-comodules.
        There is a comparison quasi-isomorphism \eqref{eq:zigzag-bn}
        \[
          \Graphs_n\xrightarrow{\sim}  \hat H(G)\otimes \BGraphs_n^m
        \]
        by taking the coaction.

  \item For the principal bundle $\Fr_M$, we will use the model $\Fr_{M}^{W,\mathrm{alg}} = (A\otimes \hat H(G),d)$ defined in Equation \eqref{eq:definition of FrM^W}.

  \item For $\FM_M$ as right $\FM_n^M$-module, we use the model $\Graphs_M$ from Section \ref{sec:graph-models-fm_m}, coacted upon by $A \otimes \Graphs_n$.
\end{enumerate}

\begin{convention}\label{conv:W}
  In what follows, when we take the $W$-resolution of an $\OmPA(G)$-comodule, it is not always necessarily clear what action of $G$ is resolved.
  We are going to put the $W$ inside the $\Omega$, with the understanding that the resolution takes place in the category of $\OmPA(G)$-comodules, and that the action refers to the object in front of the $W$.
  For example, for $\widehat{\FM}_{n} = G \times G^{\bullet} \times \FM_{n}$, the action is on the first $G$, so we are going to write
  \[ W \OmPA(\widehat{\FM}_{n}) \eqqcolon \OmPA(WG \times G^{\bullet} \times \FM_{n}) \]
  to indicate where $G$ acts in the resolution.
  All the other notations below will be defined analogously.
\end{convention}

The fourth step in the construction $\Fra_{\FM_n,\Fr_M,\FM_M}$ is to build a model of the trivializing morphism \eqref{equ:trivmorphism}, i.e.\ the map $\Fr_{M} \times \FM_{n} \to \Fr_{M} \times_M  \FM_{n}^{M}$.
This is provided by the following commutative diagram of homotopy Hopf cooperads
under $A\otimes \hat H(G)$.
\begin{equation}
  \begin{tikzcd}[column sep=small]
    (A\otimes \hat H(G) \otimes_A  \Graphs_n^M, d)  \ar{d}{\sim}  \ar{r}{\sim}&
    (A\otimes \hat H(G)  \otimes \Graphs_n,d) \ar{d}{\sim}
    \\
    (A\otimes \hat H(G) \otimes_A  A \otimes \hat H(G) \otimes \BGraphs_n^m,d)  \ar{d}{\sim}  \ar{r}{\sim}&
    (A\otimes \hat H(G)  \otimes \hat H(G) \otimes \BGraphs_n^m,d) \ar{d}{\sim}
    \\
    \Tot \OmPA( \Fr_M^W \times_M \Fr_M^W \times G^\bullet\times \FM_n) \ar{r}{\sim} &
    \Tot \OmPA( \Fr_M^W \times WG\times G^\bullet\times \FM_n).
  \end{tikzcd}
\end{equation}
Note that in this diagram, the first two rows contain strict Hopf cooperads, while in the last row the homotopy Hopf cooperad structure is given by the usual construction, such as in Equation~\eqref{eq:omegaishomotopycoop}

The diagram also clearly respects the $A_G$-coaction (where $A_G = W\OmPA(G)$), where $G$ acts on $\Fr_{M}$ using the action defined in Section~\ref{sec:constr-right-mult}.

The next complication is that in the above diagram we used $\Fr_M^W \times G^\bullet\times \FM_n$ as our replacement of $\FM_n^M$, while in the model of the action on $\FM_M$, we used $\FM_n^M$.
Thus, we would in principle have to check the commutativity of the diagram of homotopy Hopf cooperads under $A$ given by:
\beq{equ:AGraphshoCD}
\begin{tikzcd}
  \Graphs_n^M=(A\otimes\Graphs_n,d) \ar{r}{\sim} \ar{d}{\sim}&
  (A \otimes \hat H(G) \otimes \BGraphs_n^m,d) \ar{d}{\sim}
  \\
  \OmPA(\FM_n^M) \ar{r}{\sim} &
  \Tot \OmPA(\Fr_M^W \times G^\bullet\times \FM_n).
\end{tikzcd}
\eeq
Unfortunately, this diagram does not commute. However, we will check below in Proposition \ref{prop:bgraphsdiag} and Corollary \ref{cor:bgraphsdiag} that it commutes up to homotopy.
This commutativity up to homotopy will be enough for our purposes, though it adds additional complications.
More concretely, we will dualize the argument of Section \ref{sec:Frafunctoriality} to create our final zigzag of quasi-isomorphisms of homotopy Hopf cooperads and their homotopy right Hopf comodules (coactions being depicted as dashed arrows) as follows:
\beq{equ:cohtpydiagram}
\begin{tikzcd}
  \OmPA(\FM_M \circ_M \Fr_M^W)  \ar[dashed]{r} & \Tot \OmPA(WG\times G^\bullet \times \FM_n)   \\
  \Graphs_M  \circ_A \OmPA(\Fr_M^W)[t,dt]  \ar[dashed]{r} \ar{u}{\nu_0} \ar{d}{\nu_1} & \Tot \OmPA(WG\times G^\bullet \times \FM_n)  \ar{u}{=} \ar{d}{=} \\
  \Graphs_M \circ_A \OmPA(\Fr_M^W) \ar[dashed]{r} & \Tot \OmPA(WG\times G^\bullet \times \FM_n) \\
  \Graphs_M \circ \hat H(G)  \ar[dashed]{r} \ar{u}{\nu_2} & \Graphs_n. \ar{u}
\end{tikzcd}
\eeq

A few explanations are in order, since the notation is somewhat compressed.
The reader is advised to follow the diagram \eqref{equ:htpydiagram} in parallel, of which \eqref{equ:cohtpydiagram} is the reformulation in the dual and homotopy setting.
We now describe each line and each vertical arrow in the diagram.

\begin{itemize}
  \item In the first line, $\Tot \OmPA(WG\times G^\bullet \times \FM_n)$ is a homotopy Hopf cooperad.
        The underlying functor sends a tree $T$ to the dgca
        \[
          \Tot \OmPA(WG\times G^\bullet \times \FM_n(T)).
        \]
        In particular, mind that for each tree there are multiple factors of $\FM_n(r)$, one for each node, but only one factor $WG$ for the whole tree -- otherwise we would not know how to define the contraction and gluing morphisms properly.

        Moreover, $\OmPA(\FM_M \circ_M \Fr_M^W)$ is a homotopy right comodule over the aforementioned cooperad.
        Concretely, let $T$ be a marked tree as in \eqref{equ:examplemoduletree}, i.e.\ the marked vertex has children $T_{1},\, \dots,\, T_{r}$.
        The functor $\OmPA(\FM_{M} \circ_{M} \Fr_{M}^{W})$ maps $T$ to the dgca:
        \[
          \Tot \OmPA \bigl( \FM_M(r) \times_{M^r} (\Fr_M^W)^r  \times \prod_{j=1}^r (WG \times G^\bullet \times \FM_n(T_j)) \bigr).
        \]
        The contraction morphisms (comodule structure) are defined as pullbacks of the topological composition, in the natural way.
  \item In the second line, the Hopf cooperad $\Tot \OmPA(WG \times G^{\bullet} \times \FM_{n})$ is the same.
        The homotopy Hopf comodule $\Graphs_M  \circ_A \OmPA(\Fr_M^W)[t,dt]$ assigns to a tree $T$ as before the dgca:
        \[
          \Tot \Graphs_M(r) \otimes_{A^r} \OmPA \bigl( (\Fr_M^W\times I)^r \times \prod_{j=1}^r (WG\times G^\bullet \times \FM_n(T_j)) \bigr),
        \]
        where $I=[0,1]$ is again the interval.

        The contraction morphisms are defined (dually and) analogously to the action on the module in the second line of \eqref{equ:htpydiagram}.
        More concretely, for the contraction (or rather expansion) of a top edge, we first take the corresponding $\Graphs_n^M$-coaction on the factor $\Graphs_M$. The resulting element in $\Graphs_n^M$ is sent to $\Tot \OmPA(\Fr_M^W \times G^\bullet \times \FM_n)[t,dt]$ using the explicit morphism from Corollary \ref{cor:bgraphsdiag}.
        Finally, the result can be mapped naturally to $\Tot \OmPA(\Fr_M^W \times WG \times G^\bullet \times \FM_n)[t,dt]$ using the pullback of the $WG$-action on $\Fr_M^W$.
  \item The left vertical upwards arrow $\nu_0$ is induced by the map $\Graphs_M\to \OmPA(\FM_M)$ of Section \ref{sec:graph-model-fiber}, and by the restriction to the endpoints $t=0$ of the intervals $I$.
  \item In the third line, the Hopf cooperad is still the same, but we restrict the comodule to $t = 1$.
        This comodule is defined analogously to the one in the second line, except that in the (co)contraction morphisms one does not see the full homotopy from Proposition \ref{prop:bgraphsdiag}; we just see the map at the endpoint $t=1$ of the interval, i.e., the upper composition in \eqref{equ:AGraphshoCD}.
  \item Correspondingly, the map $\nu_1$ is merely the restriction to the endpoints $t=1$ of the intervals $I$.
  \item In the last line, we see the $\Graphs_{n}$-comodule $\Graphs_{M} \circ \hat{H}(G)$ from Section~\ref{sec:equiv-graph-models}.
  \item Finally, the map $\nu_{2}$ is defined using the morphism $(A \otimes \hat{H}(G),d) \to \OmPA(\Fr_{M}^{W})$ from Section~\ref{sec:model-frame-bundle}
\end{itemize}

The diagram \eqref{equ:cohtpydiagram} realizes our desired zigzag of morphisms of homotopy Hopf cooperads and comodules.
Additionally, it is not hard to check that all arrows in the diagram are quasi-isomorphisms. (This is due to the fact that $\Graphs_M(r)$ is free as an $A^r$-module.)
We thus get as an intermediary result:
\begin{prop}
  The Hopf right comodule $(\Graphs_{M}^{\fr}, \Graphs_{n})$ is a model for $(\FM_{M}^{\fr}, \FM_{n})$.
\end{prop}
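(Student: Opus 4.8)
The plan is to verify that the zigzag~\eqref{equ:cohtpydiagram} consists entirely of quasi-isomorphisms of homotopy Hopf cooperads together with their homotopy Hopf comodules, and that the single upward-pointing arrow $\nu_1$ is a weak equivalence; since the two ends of the zigzag are $(\OmPA(\FM_M\circ_M\Fr_M^W),\,\Tot\OmPA(WG\times G^\bullet\times\FM_n))$ and $(\Graphs_M\circ\hat H(G),\,\Graphs_n)$, this immediately yields the claim.

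First I would identify these two ends. The bottom line of~\eqref{equ:cohtpydiagram} is $(\Graphs_M\circ\hat H(G),\Graphs_n)$, which is by definition $(\Graphs_M^{\fr},\Graphs_n)$ with its $\Graphs_n$-comodule structure introduced in the previous subsection. For the top line, recall from Section~\ref{sec:constr-right-mult} that $\FFM_M=\Fra_{\FM_n,\Fr_M,\FM_M}$, that replacing $\Fr_M$ by its $W$-resolution $\Fr_M^W$ does not change the weak homotopy type, and that $\Tot\OmPA(WG\times G^\bullet\times\FM_n)$ is the model of~\cite{KhoroshkinWillwacher2017} for $\FM_n$ as a homotopy operad in $G$-spaces; hence the top pair is a model for $(\FM_M^{\fr},\FM_n)$.

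Next I would go through the vertical maps. The arrows $\nu_0$ and $\nu_1$ are the evaluations at $t=0$ and $t=1$ of the interval factor $\R[t,dt]$ appearing in the second line (with $\nu_0$ precomposed with the quasi-isomorphism $\Graphs_M\to\OmPA(\FM_M)$ of Section~\ref{sec:graph-model-fiber}, extended $A$-linearly); both are quasi-isomorphisms because $\R[t,dt]\qiso\R$. The arrow $\nu_2$ is induced by the quasi-isomorphism $\Fr_M^{W,\mathrm{alg}}=(A\otimes\hat H(G),d)\to\OmPA(\Fr_M^W)$ of Section~\ref{sec:model-frame-bundle}, together with the comparison $\Graphs_n\qiso\hat H(G)\otimes\BGraphs_n$ and the equivariant quasi-isomorphisms~\eqref{eq:zigzag-bn} and $\omega_{\mathrm{equivar}}$ from~\cite{KhoroshkinWillwacher2017}. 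The point that lets one upgrade ``quasi-isomorphism in each tensor factor'' to ``quasi-isomorphism of the whole'' is that $\Graphs_M(r)$ is free as an $A^{\otimes r}$-module~\cite{CamposWillwacher2016}, so all the relative tensor products over $A^{\otimes r}$ occurring in the construction compute the correct base change. It then remains to check, dualizing the bookkeeping of Section~\ref{sec:Frafunctoriality} tree by tree, that each vertical arrow is actually a morphism of homotopy Hopf comodules, keeping in mind that each node of a tree carries its own $\FM_n$-factor while the whole tree carries a single $WG$-factor.

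The main obstacle is that the square~\eqref{equ:AGraphshoCD}, which compares the graphical model $\Graphs_n^M$ of $\FM_n^M$ with the bar-type replacement $\Fr_M^W\times G^\bullet\times\FM_n$, does not commute strictly but only up to homotopy. This is exactly why the zigzag~\eqref{equ:cohtpydiagram} is modelled on the lax functoriality statement of Section~\ref{sec:Frafunctoriality}, with its extra interval factor $\R[t,dt]$, rather than on the strict one, and the required homotopy will be supplied by Proposition~\ref{prop:bgraphsdiag} and Corollary~\ref{cor:bgraphsdiag}; the delicate part of the proof is to thread that homotopy through the second line of~\eqref{equ:cohtpydiagram} and to confirm that the resulting (co)contraction morphisms remain compatible with the full homotopy comodule structure. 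A secondary subtlety is that $M^r$ need not be simply connected, so the pullback-to-pushout principle does not directly apply to identify $\Graphs_M^{\fr}(r)$ as a model of $\FFM_M(r)$; instead one uses that $\FFM_M(r)\to\FM_M(r)$ is a principal $G^r$-bundle and writes the model down explicitly as in~\cite[Theorem~1, Section~9.3]{GHV1976}, which is precisely how $\Graphs_M^{\fr}$ was defined.
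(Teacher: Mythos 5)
Your proposal is correct and follows essentially the same route as the paper: the paper's argument for this proposition \emph{is} the construction and verification of the zigzag \eqref{equ:cohtpydiagram}, dualizing the lax functoriality of Section~\ref{sec:Frafunctoriality}, with Corollary~\ref{cor:bgraphsdiag} supplying the needed homotopy for \eqref{equ:AGraphshoCD} and the freeness of $\Graphs_M(r)$ over $A^{\otimes r}$ giving that all arrows are quasi-isomorphisms. Only a cosmetic slip: in \eqref{equ:cohtpydiagram} the upward-pointing arrows are $\nu_0$ and $\nu_2$ (while $\nu_1$ points downward), but since every arrow in the zigzag is a quasi-isomorphism this does not affect the argument.
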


Finally, note that all objects in the diagram have a homotopy $A_G \simeq \hat H(G)$-coaction, compatible with all structures and maps.
We then desire to apply the framing construction to pass from $\FM_n$ to the framed counterpart $\FFM_n=\FM_n\circ G$.
\begin{prop}
  Let $\op T$ be a an operad in $G$-spaces and $\op M$ be a right $(\op T \circ G)$-module.
  Let $H=H(G)$ be the Hopf coalgebra which modelling $G$. Suppose that $\op C$ is a Hopf cooperad in $H$-comodules which models $\op T$ and that $\op N$ is a homotopy $\op C$-comodule with a compatible $H$-coaction which models $\op M$.
  Then the natural coaction of $\op C \circ H$ on $\op N$ models the action of $(\op T \circ G)$ on its right module $\op M$.
\end{prop}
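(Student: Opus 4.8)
The plan is to obtain this from the dualized, homotopy-coherent version of the framing construction and its functoriality established in Section~\ref{sec:Frafunctoriality}. Concretely, I would first recall the standard description of modules over semidirect products~\cite{SalvatoreWahl2003}: since $(\op T\circ G)(r) = \op T(r)\times G^r$, a right $(\op T\circ G)$-module is nothing but a right $\op T$-module $\op M$ together with $G^r$-actions on $\op M(r)$ making all module structure maps equivariant in the natural sense, i.e.\ a right $\op T$-module \emph{internal to $G$-spaces}; dually, since $(\op C\circ H)(r) = \op C(r)\otimes H^{\otimes r}$, a right $(\op C\circ H)$-comodule is nothing but a right $\op C$-comodule internal to $H$-comodules, and its coaction is exactly the ``natural'' one of the statement, built from the $\op C$-coaction on $\op N$ and the coproduct and product of $H$. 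Under these identifications, the hypotheses of the proposition say precisely that the pair $(\op C,\op N)$ --- a homotopy Hopf cooperad and homotopy Hopf comodule internal to $H$-comodules --- is connected to $(\OmPA(\op T),\OmPA(\op M))$ --- internal to $\OmPA(G)$-comodules --- by a zigzag of quasi-isomorphisms respecting all of these internal structures; here one passes between $H$ and $\OmPA(G)$ through the $W$-construction $A_G = W\OmPA(G)$ of~\cite[Section~3]{KhoroshkinWillwacher2017}, using $H\qiso A_G\qiso \OmPA(G)$ and the compatible (homotopy) $\OmPA(G)$-coaction on $\OmPA(\op M)$.

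The proof is then concluded by applying the functor $(-)\circ G$ on the topological side and $(-)\circ H$ on the algebraic side to the entire comparison zigzag. The essential point to verify is that these are functors of the ambient homotopical categories which preserve objectwise weak equivalences: on the algebraic side a quasi-isomorphism of $H$-comodules $\op C\to\op C'$ (resp.\ of comodules $\op N\to\op N'$) remains one after $-\otimes H^{\otimes r}$ since $H$ is flat over $\R$, and the induced cocomposition, cutting and coaction maps are the ones named in the statement; this is the dual of the functoriality of $\Fra$ in Section~\ref{sec:Frafunctoriality}. Applying this functor to the zigzag above yields a zigzag of quasi-isomorphisms of honest homotopy Hopf cooperads and comodules connecting $(\op C\circ H,\op N)$ to $(\OmPA(\op T\circ G),\OmPA(\op M))$, which is the assertion.

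The main obstacle is coherence. Since $\op N$ is only a \emph{homotopy} $\op C$-comodule and $H$ only models $G$ through a zigzag rather than a direct map, one cannot form $\op C\circ H$, $\OmPA(\op T\circ G)$, the intermediate objects of the zigzag, and all the comparison maps strictly at the same time, and $(-)\circ H$ does not a priori commute with the homotopy-coherent data. As in the proof of Theorem~\ref{thm:model-ffmm}, I would resolve this by first replacing all (co)operads and (co)modules involved by their explicit $W$-resolutions, so that the semidirect products, the $H$- and $\OmPA(G)$-(co)actions, and all comparison morphisms become strict; then $(-)\circ H$ applies functorially and is matched with $(-)\circ G$ through the resolutions $WG$, $W\op M$, $\dots$, by the same bookkeeping, transposed to the present (framed) setting, as in~\cite[Section~5]{KhoroshkinWillwacher2017}. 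Once the resolutions are in place, the remaining compatibilities --- with differentials, products, cooperadic cocomposition, and the $G^r$- resp.\ $H^{\otimes r}$-(co)actions --- are checked on generators and are immediate.
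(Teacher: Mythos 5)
Your proposal is correct and follows essentially the same route as the paper: the paper's proof simply invokes \cite[Theorem 5.5]{KhoroshkinWillwacher2017} (whose method is exactly the $W$-resolution strictification you describe, making the $H$- resp.\ $\OmPA(G)$-(co)actions and the semidirect products strict before comparing) and notes that this argument extends verbatim to (co)modules, which is the extension you spell out. Your additional observations (modules over $\op T\circ G$ as $\op T$-modules internal to $G$-spaces, flatness over $\R$ ensuring that $-\otimes H^{\otimes r}$ preserves quasi-isomorphisms) are correct unpackings of that same argument rather than a different strategy.
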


Note that this is, a priori, not completely obvious.
For example, we do not know how to apply the framing construction directly to homotopy (co)operads.

\begin{proof}
  Let us sketch the proof of~\cite[Theorem 5.5]{KhoroshkinWillwacher2017}, which can be adapted to the case of comodules immediately.
  In the same way as we constructed the first part of~\eqref{eq:zigzag-bn-omega}, we obtain a zigzag, where dashed arrows represent coactions, $K_G$ is the Koszul complex, and $\upsilon$ is the map from Proposition~\ref{prop:hopf-formal}:
  \begin{equation}
    \begin{tikzcd}
      H(G) \ar[d,dashed]
      & H(G) \ar[d,dashed] \ar[l, equal] \ar[r, "\upsilon"]
      & A_G \ar[d,dashed] \\
      K_G \otimes_{H(\BG)} \op C
      & \Tot \modo_{A_G}(\widehat{\op T}) \ar[l, "\sim"] \ar[r, "\sim"]
      & \modo_{A_G}(\op T)
    \end{tikzcd}
  \end{equation}

  After applying the Boardman--Vogt construction, and using the fact that $(-) \circ H(G)$ (resp., $(-) \circ A_G$) is exact, we get a zigzag:
  \begin{equation}
    (K_G \otimes_{H(\BG)} \op C) \circ H(G) \xleftarrow{\sim} W(K_G \otimes_{H(\BG)} \op C) \circ H(G) \xrightarrow{\sim} W(\modo_{A_G}(\op T)) \circ A_G.
  \end{equation}
  Moreover, we have a zigzag of quasi-isomorphisms of homotopy Hopf cooperads (cf.~\cite[Proposition 5.4]{KhoroshkinWillwacher2017}):
  \najibline{After sitting down I think I see what the first map is now. As far as I can tell, it's actually mostly straightforward but tedious to write down... Should we include the details?}

\ricardoline{I don't think it's very useful for the amount of space it would take. To understand this one needs to write this down oneself, I think.}
  \begin{equation}
    \modo_{A_G}(\op T \circ G) \xrightarrow{\sim} \OmPA(W_{G\text{-op}}(W_G(\op T)) \circ WG) \xleftarrow{\sim} \OmPA(W_{\text{op}}(\op T) \circ G) \xleftarrow{\sim} \OmPA(\op T \circ G),
  \end{equation}
  where $W_G$ is the Boardman--Vogt construction for $G$-spaces, $W_{G\text{-op}}$ for operads in $WG$-spaces, and $W_{\text{op}}$ for topological operads.
  If we replace $\op T$ by $\op M$ and $\op C$ by $\op N$ in the above zigzags, we also obtain a zigzag of quasi-isomorphisms in Hopf operadic comodules (under the corresponding Hopf cooperad of the above zigzag), proving the claim.
\end{proof}
Using this proposition, we can conclude:
\begin{thm}
  \label{thm:model-ffmm}
  The Hopf right comodule $(\stG_{M}^{\fr}, \stG_{n})$ is a model for $(\FFM_{M}, \FM_{n})$, i.e., there exists a zigzag of quasi-isomorphisms of (homotopy) Hopf cooperads/comodule between $(\stG_{M}^{\fr}, \stG_{n})$ and $(\OmPA(\FFM_{M}), \OmPA(\FM_{n}))$.

  Furthermore, the $\SO(n)$-actions on $(\FFM_{M}, \FM_{n})$ are modelled (in the same sense) by the natural $\hat H(\SO(n))$-actions on the pair $(\stG_{M}^{\fr}, \stG_{n})$.
  It follows that the pair $(\stG_{M}^{\fr}, \stG_{n}^{\fr})$ is a model of $(\FFM_{M}, \FM_{n}^{\fr})$.
\end{thm}

\section{Parallelized manifolds and the relation to earlier work}
\label{sec:frame-change}

In this section, we study how our models behave on parallelizable manifolds, i.e., manifolds whose tangent bundles are trivial.

\subsection{Choosing a framing}
\label{sec:choosing-framing}

In this subsection, we assume that $M$ is parallelized, i.e., it comes equipped with a chosen section of the frame bundle:
\[
  s: M\to \Fr_M.
\]
In this case we may define a right $\FM_n$-action on the (non-framed) configuration space $\FM_M$ directly.
It is shown in \cite{CamposWillwacher2016,Idrissi2016} that, provided proper choices are made in the construction, there is a natural $\Graphs_n$-coaction on our model $\Graphs_M$ for $\FM_M$.
This coaction models the $\FM_n$-action on $\FM_M$.
In this section, we shall elucidate this action and describe the relationship with the present work.

To this end, let us note that the action of $\FM_n$ on $\FM_M$ in the parallelized setting may be obtained from the action of $\FM_n$ on $\FM_n^{\fr}$ and from the section of the frame bundle $s$ as follows.
The space $\FM_M$ is the pullback:
\[
  \begin{tikzcd}[row sep = small, column sep = small]
    \FM_M(r) \ar{r} \ar{d} \ar[dr, phantom, "\lrcorner"] & \FM_M^{\fr}(r) \ar{d} \\
    M^r \ar{r}{s^r} & \Fr_M^r
  \end{tikzcd}.
\]
The action of $\FM_n$ may be recovered by functoriality of the pullback, as the dashed edge in the following map of pullback squares:
\[
  \begin{tikzcd}[cramped, sep=small]
    & \FM_M(r+s-1) \ar{rr} \ar{dd} & & \FM_M^{\fr}(r+s-1) \ar{dd} \\
    \FM_M(r) \times \FM_n(s) \ar[dashed]{ur} \ar{dd} \ar[crossing over]{rr} & & \FM_M^{\fr}(r) \times \FM_n(s) \ar{ur}{\circ_j} & \\
    & M^{r+s-1} \ar{rr} & & \Fr_M^{r+s-1} \\
    M^r \ar{rr}{s^{r}} \ar{ur}{\Delta}& & \Fr_M^r \ar{ur}{\Delta} \ar[crossing over, leftarrow]{uu}&
  \end{tikzcd}
\]
In the diagram, the map $\circ_j$ denotes the action of $\FM_n$ on $\FM_M^{\fr}$ on $\FM_M^{\fr}$.
The arrows labeled $\Delta$ are $(s-1)$-fold diagonals, applied to the $j$-th factor in the product.

Let us dualize the above diagram and constructions, using our model ($\Graphs_M^{\fr}$, $\Graphs_n$) for the framed configuration space comodule.
Let us assume that our section $s$ is modeled by the dgca morphism:
\beq{equ:sigma}
\sigma : (A\otimes \hat H(G), d) \to A
\eeq
From this data, we obtain a model for $\FM_n(r)$ as the pushout
\[
  \begin{tikzcd}
    \Graphs_M^{\fr}(r) \otimes_{ (A\otimes \hat H(G))^r } A^r \cong \Graphs_M(r)
    \ar[dr, phantom, "\lrcorner"]
    & \Graphs_M^{\fr}(r)  \ar{l} \\
    A^r \ar{u} & (A\otimes \hat H(G), d)^r \ar{l}{\sigma^r} \ar{u}
  \end{tikzcd}\, .
\]
Here we again cannot readily apply the pullback-to-pushout Lemma since the base is not simply connected. However, one nevertheless verifies explicitly that the Lemma still holds, i.e., that the induced map $\Graphs_M^{\fr}(r) \otimes_{ (A\otimes \hat H(G))^r } A^r \cong \Graphs_M(r)\to \OmPA(\FM_M)$ is a quasi-isomorphism.
The proof is analogous to the proof of Theorem \ref{thm:graphs-m-model}, using the result of \cite[Theorem 1, section 9.3]{GHV1976}.

The (collection of the) objects in the upper left corner of the diagram (isomorphic to $\Graphs_M$) inherit a natural $\Graphs_n$-coaction from the $\Graphs_n$-coaction on $\Graphs_M^{\fr}(r)$.
Let us describe that coaction explicitly.
Start with a graph $\Gamma\in \Graphs_M$.
We then take the $\Graphs_n^M$-coaction, producing a product:
\[
  \sum \Gamma' \otimes \gamma \in \Graphs_{M} \otimes \Graphs_{n},
\]
by contracting subgraphs $\gamma$ of $\Gamma$.
Next we coact by $\hat H(G)$ on $\gamma$, producing
\[
  \sum \Gamma' \otimes h \otimes \gamma' \in \Graphs_{M}  \otimes \hat H(G) \otimes \Graphs_{n},
\]
with $h \in \hat H(G)$.
Finally, we use $\sigma$ to map $h$ to $A$ and multiply that factor inside $\Gamma'$, producing:
\[
  \sum \Gamma' p_j^*(\sigma (h)) \otimes \gamma' \in \Graphs_M\otimes \Graphs_n,
\]
which is the desired result of the $\Graphs_n$-coaction.
Let us denote the right $\Graphs_n$-comodule obtained from $\Graphs_M$ via the map $\sigma$ above by $\sigma^*\Graphs_M^{\fr}$.

If we make careful choices, then the trivialization of the frame bundle may be used to define the model $(A\otimes \hat H(G), d)$ for the frame bundle so that the differential is $d=0$.
Furthermore, in this case we also have that $\Graphs_M^{\fr}(r)=\Graphs_M(r) \otimes \hat H(G)^r$, with no piece of the differential going from $\hat H(G)$ to $\Graphs_M$.
In that case we may take our section $\sigma$ to be the trivial map, which is just the projection to the first factor $A$.
The $\Graphs_n$-coaction on $\Graphs_M$ so obtained agrees with the action of \cite{CamposWillwacher2016}.

There is also an alternative viewpoint yielding the same formula.
Our section $s$ of the frame bundle gives rise to a trivialization of the fiberwise little discs operad:
\[
  M \times \FM_n \xrightarrow{s\times \mathit{id}} \Fr_M \times \FM_n \to  \Fr_M \times_G \FM_n\cong \FM_n^M.
\]
Here, the right-hand arrow is the quotient under the $G=\SO(n)$-action, and the whole composition is an isomorphism of operads over $M$.
Using this trivialization we may then pull back the right $\FM_n^M$-multimodule structure on $\FM_M$ to a right $(M \times \FM_n)$-multimodule structure.
This structure restricts trivially to an $\FM_n$-module structure.

Translating this construction into algebraic models, the trivialization is represented by the following composition:
\begin{multline}
  \label{equ:frtriv}
  \Graphs_n^M \to ((A\otimes \hat H(G),d) \otimes \Graphs_n)^{\hat H(G)} \to
  \\
  \to A\otimes \hat H(G) \otimes \Graphs_n
  \xrightarrow{\sigma \otimes \mathit{id}} A \otimes \Graphs_n.
\end{multline}
Here, we used $\Graphs_n$ with the $\hat H(G)$-coaction as a model for $\FM_n$. The first arrow is induced by the $\hat H(G)$-coaction on $\Graphs_n$.
Finally, the (co)trivialization morphism above allows us to (co)restrict the $(A \otimes \Graphs_n)$-coaction on $\Graphs_M$ to a coaction of $\Graphs_n$.
The formula for this coaction is evidently precisely the same as obtained from the previous, alternative derivation.

\subsection{Computation: Changing the frame}

In this subsection, we still assume that $M$ is parallelizable.
We want to study the effect of changing the trivialization of the tangent bundle on our graphical models for $\FM_n$ as $\FM_n$-module.
We shall see that the dependence on the chosen framing is relatively minor.

For the purposes of the following computation, let us assume that we have chosen some reference trivialization.
In this case, our dgca model for the frame bundle can be taken to be the tensor product of dgcas $A\otimes \hat H(G)$.
Moreover, suppose that we have made choices so that our model $A\otimes \Graphs_n$ has no piece of the differential between $\Graphs_n$ and $A$.
Suppose also that the MC element $m\in H(\BG)\otimes \GC_n$ controlling the $\SO(n)$-action on $\FM_n$ has already been brought to the form of Theorem \ref{thm:MCform} by a gauge transformation for simplicity.

Now, suppose that we have chosen some other trivialization of the tangent bundle, which is modeled by a dgca map $\sigma: A\otimes \hat H(G) \to A$ as in \eqref{equ:sigma} above.
Since $\hat H(G)$ is equivalent to $H(G)$ as dgcas, up to changing $\sigma$ by a homotopy, we may assume that it is given as a composition:
\[
  A\otimes \hat H(G) \to A\otimes H(G) \to A.
\]
Such a $\sigma$ is completely determined by providing the images of the generators of $H(G)$, i.e., by the images of the Pontryagin classes and, for even $n$, the Euler class.

Next, let us write down explicitly the composition \eqref{equ:frtriv} describing the trivialization map
\[
  f_\sigma : \Graphs_n^M \to A\otimes \Graphs_n
\]
associated to $\sigma$.
Using the explicit form of the action as encoded by $m$ of Theorem \ref{thm:MCform}, we see that for even dimension $n$ the maps sends a graph $\Gamma\in \Graphs_n$ to
\[
  f_\sigma(\Gamma) = (1+\sigma(E) \tadp\cdot ) \Gamma ,
\]
while for $n$ odd the corresponding map is
\[
  f_\sigma(\Gamma) = (1+\sigma(P_{top}) \theta \cdot ) \Gamma ,
\]
where $\theta$ stand for the $\theta$-graph
\[
  \theta =
  \begin{tikzpicture}[baseline=-.65ex]
    \node[int] (v) at (0,-.5) {};
    \node[int] (w) at (0,.5) {};
    \draw (v) edge (w) edge[bend left] (w) edge[bend right] (w);
  \end{tikzpicture}.
\]
Hence for even $n$, the choice of framing enters only through a choice of class in $H^{n-1}(M)$ represented by the image of the Euler class $\sigma(E)$.
For odd $n$, the framing enters only through a class in $H^{2n-3}(M)$ represented by $\sigma(P_{top})$.
Strikingly, this means in particular that for $n>3$ odd the choice of framing does not enter at all into the real homotopy of $\FM_M$ as a right $\FM_n$-module.

Given the maps $f_\sigma$, let us denote the corresponding $\Graphs_n$-right comodule $f_\sigma^*\Graphs_M$.
(It is the same as $\Graphs_M$ as a Hopf collection, but the operadic coaction is twisted by $f_\sigma$.)
Furthermore, let us note that the construction of $\Graphs_M$ depends on a choice of MC element $z\in \GC_M$, which generally depends on choices made.

Let us make the dependence explicit in the notation and write $\Graphs_M^z$ for the moment.
Then one may in fact check that one has an isomorphism of right $\Graphs_n$ comodules
\[
  f_\sigma^*\Graphs_M^z \cong \Graphs_M^{z'},
\]
where $z'\in \GC_M$ is another Maurer--Cartan element which can be obtained through a natural $(A\otimes \GC_n)$-action on $\GC_M$ as
\[
  z' =
  \begin{cases}
    \exp(\sigma(E) \tadp\cdot) z,        & \text{for $n$ even}; \\
    \exp(\sigma(P_{top}) \theta\cdot) z, & \text{for $n$ odd}.
  \end{cases}
\]

For details, we refer to forthcoming work. In summary, one may say that the choice of framing essentially only affects the coefficient of the tadpole graph in the MC element $z$ if $n$ is even, that it only affects the coefficient of the $\theta$-graph if $n=3$, and that it has no effect for $n\geq 5$ odd.

\todo[inline]{Thomas: Here I thought for quite a while how to make the remark precise (=into a Lemma). This is very naturally done describing the actions of various graphical Lie algebras on the graph complexes. However, I think their description would be combinatorially too lengthy for this paper, and fits nicely into the forthcoming work about the homotopy automorphisms anyway. Hence I settled with the "cheap solution" above.}

\appendix

\section{Explicit form of propagator}\label{sec:explicit_prop}

For completeness, let us give an explicit form of the equivariant propagator, i.e., an equivariant form on the $(n-1)$-sphere extending the (round) volume form whose equivariant differential is 0 for $n$ odd or proportional to the Euler class for $n$ even (see Section~\ref{sec:equiv-graph-models}).
Such a formula has been given within the toric Cartan model in \cite[Appendix A]{KhoroshkinWillwacher2017}.
Here we will alternatively use the (non-toric) Cartan model instead:
\[
  (S(\alg{so}_{n}^*[-2])\otimes \OmPA(S^{n-1}))^G.
\]
A basis for $\alg{so}_{n}^*[-2]$ (dual of antisymmetric matrices) is denoted by symbols $u_{ij}=-u_{ji}$, with $1\leq i\neq j \leq n$.
The map from this model to forms on $\FM_{n}(2) = S^{n-1}$ are defined similarly to the map $\Phi$ from~\cite[Section~4.7]{KhoroshkinWillwacher2017}.

\newcommand{\cE}{{\mathcal E}}
Define the operator
\[
  I := \sum_{i<j}u_{ij} \iota_i \iota_j.
\]
Then define the equivariant volume form to be
\[
  \Omega_{sm}^{C} \coloneqq  \iota_\cE \sum_{0\leq k < n/2} c_k I^k (dx_1\cdots dx_n),
\]
where $\cE=\sum_{j=1}^n x_j \frac{\partial}{\partial x_j}$ is the Euler vector field, $c_{k}$ are combinatorial coefficients to be defined shortly, and it is understood that (only) after performing the contractions one restricts the form to the sphere.

Let us now verify that $\Omega_{sm}^{C}$ satisfies the defining equations, see~\cite[Section~6.4]{KhoroshkinWillwacher2017}.
The well-definedness, the fact that it is a volume form of area $1$ on the sphere, and that it is equivariant under the antipodal map and the action of $G$, are proved the same way as in~\cite[Lemma~A.1]{KhoroshkinWillwacher2017}. The unit volume condition fixes the first coefficient to
\[
  c_0 = \frac 1 {\mathit{vol}(S^{n-1})} .
\]

Finally we must check that its image of $\Omega_{sm}^{C}$ under the differential $d_{u} \coloneqq d- \sum_{i,j} u_{ij}x_i \iota_j$ is proportional to the Euler class (for even $n$) or zero (for odd $n$).
First, note that $[d,\iota_\cE] = L_\cE$ is the Lie derivative with respect to $\cE$, which acts by multiplying a constant form by its degree.
Furthermore $d (I^k(dx_1\cdots dx_n)) = 0$, since the form is constant. Hence we obtain
\begin{align*}
  d \Omega_{sm}^{C}
   & = L_\cE \sum_{0\leq k < n/2} c_k I^k (dx_1\cdots dx_n)
  \\&= \sum_{0\leq k < n/2} (n-2k)c_k I^k (dx_1\cdots dx_n).
\end{align*}
Next, define the differential form
\[
  \eta := \sum_k x_k dx_k.
\]
Denoting by $\eta$ also the operator of multiplication with $\eta$ one computes that:
\begin{align*}
  [I,\eta]       & =-\sum_{i,j} u_{ij} x_i\iota_j
                 & \text{and hence}                                      &  &
  [I^{k+1},\eta] & =-(k+1)\left(\sum_{i,j} u_{ij} x_i\iota_j \right)I^k.
\end{align*}
We hence obtain
\begin{align*}
  \sum_{i,j} u_{ij}x_i \iota_j  \Omega_{sm}^{C}
   & =
  -\iota_\cE
  \sum_{0\leq k < n/2} c_k
  \left(\sum_{i,j} u_{ij} x_i\iota_j \right)I^k (dx_1\cdots dx_n)
  \\&=
  \iota_\cE
  \sum_{0\leq k < n/2} \frac{c_k}{k+1}
  [I^{k+1},\eta](dx_1\cdots dx_n)
  \\&=
  \iota_\cE
  \sum_{0\leq k < n/2} \frac{c_k}{k+1}
  (I^{k+1}\eta-\eta I^{k+1})(dx_1\cdots dx_n).
\end{align*}
By degree reasons we have $\eta (dx_1\cdots dx_n)=0$. Hence the expression above becomes
\[
  -\iota_\cE
  \sum_{0\leq k < n/2} \frac{c_k}{k+1}
  \eta I^{k+1}(dx_1\cdots dx_n)
  =
  -\sum_{0\leq k < n/2} \frac{c_k}{k+1}
  ([\iota_\cE,\eta] -\eta \iota_\cE) I^{k+1}(dx_1\cdots dx_n)
\]

Furthermore, we note $[\iota_\cE,\eta]=\sum_kx_k^2=1$ when restricted to the sphere $S^{n-1}$, and that $\eta=\frac 1 2 d \sum_kx_k^2=0$ when restricted to the sphere. We hence find, on the sphere
\[
  \sum_{i,j} u_{ij}x_i \iota_j  \Omega_{sm}^{C}
  =
  -\sum_{0\leq k < n/2} \frac{c_k}{k+1}
  I^{k+1}(dx_1\cdots dx_n).
\]
Putting the above computations together we see that, again on the sphere
\begin{equation}\label{equ:duOmega}
  \begin{aligned}
    d_u \Omega_{sm}^{C}
     & =
    \sum_{0\leq k < n/2}
    \left(
    (n-2k)c_k I^k
    +
    \frac{c_k}{k+1} I^{k+1}
    \right)
    (dx_1\cdots dx_n)
    \\&=
    \sum_{1\leq k < n/2}
    \Bigl(
    (n-2k)c_k
    +
    \frac{c_{k-1}}{k}
    \Bigr)
    I^k(dx_1\cdots dx_n) \mathbin{+}
    \\ & \hspace{2cm}
    \mathbin{+}
    \begin{cases}
      \frac{c_{n/2-1}}{n/2} I^{n/2}(dx_1\cdots dx_n)
        & \text{for $n$ even}
      \\
      0 & \text{for $n$ odd}
    \end{cases}.
  \end{aligned}
\end{equation}
where we use that $I^{(n-1)/2+1}dx_1\cdots dx_n=0$ for odd $n$ and $dx_1\cdots dx_n=0$ on the sphere by degree reasons. The vanishing of the sum over $k$ yields a recursive formula for the $c_k$ which can be solved to
\[
  c_k
  =
  \frac{(-1)^k}
  {k! (n-2k)(n-2k+2)\cdots (n-2)} c_0,
\]
with $c_0=\frac{1}{\mathit{vol}(S^{n-1})}=\frac{\Gamma(n/2)}{2\pi^{n/2}}$.
For odd $n$ we then find from \eqref{equ:duOmega} that $d_u \Omega_{sm}^{C}=0$ as desired.
For even $n=2m$ denote
\[
  \Pf(u):= \frac{1}{2^m m!} \sum_{\sigma\in S_n}(-1)^\sigma
  u_{\sigma(1)\sigma(2)}\cdots u_{\sigma(n-1)\sigma(n)}.
\]
We have
$$
I^{n/2}(dx_1\cdots dx_n)=(-1)^m m! \Pf(u),
$$
so that equation \eqref{equ:duOmega} simplifies to
\begin{align*}
  d_u \Omega_{sm}^{C} & =
  \frac{c_{m-1}}{m} (-1)^m m! \Pf(u)
  \\&=
  \frac{-1}
  {m!2^{m-1}(m-1)!}
  \frac{(m-1)!}{2\pi^m}
  m!
  \Pf(u)
  \\&=
  -\frac{1}{(2\pi)^m}
  \Pf(u)=:-E,
\end{align*}
where we define the Euler class as $E=\frac{\Pf(u)}{(2\pi)^m}$.

\section{Homotopy commutativity of a diagram}
\label{sec:homot-comm-diagr}

The purpose of this section is to show the following Proposition.
\begin{prop}\label{prop:bgraphsdiag}
  The following diagram of homotopy Hopf cooperads under $H(\BG)$ is homotopy commutative:
  \begin{equation}
    \label{equ:bgraphsdiag}
    \begin{tikzcd}[cramped, sep=small]
      \BGraphs^m_n \ar{r} \ar{d} & \Tot ( \OmPA(G^\bullet\times \FM_n)) \ar{d}\\
      (H(\BG)\otimes \hat H(G)\otimes H(\BG) \otimes \Graphs_n,d) \ar{r}
      & \Tot ( \OmPA(G^\bullet\times WG \times G^\bullet\times \FM_n))
    \end{tikzcd},
  \end{equation}
  where we consider the diagonal of the bisimplicial object on the bottom-right corner, and where the left vertical arrow is induced through the coaction (according to the MC element $m$)
  \[
    \Graphs_n\to \hat H(G)\otimes \Graphs_n.
  \]
\end{prop}

Pulling back along the map $A\to \OmPA(M)$, we obtain the following corollary, which is a key part of Section~\ref{sec:zigzag} (see diagram~\eqref{equ:AGraphshoCD}):
\begin{cor}\label{cor:bgraphsdiag}
  The following diagram of homotopy operads under $A\simeq \OmPA(M)$ is homotopy commutative:
  \beq{equ:bgraphsdiag2}
  \begin{tikzcd}
    \Graphs_n^M \ar{r} \ar{d} & \OmPA(\FM_n^M) \ar{d}\\
    (A \otimes \hat H(G)\otimes H(\BG) \otimes \Graphs_n,d) \ar{r}
    & \Tot ( \OmPA(\Fr_M^W \times G^\bullet\times \FM_n))\, .
  \end{tikzcd}
  \eeq
\end{cor}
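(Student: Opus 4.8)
Write $T \coloneqq \Tot\bigl(\OmPA(G^{\bullet}\times WG\times G^{\bullet}\times\FM_n)\bigr)$ for the bottom-right corner of~\eqref{equ:bgraphsdiag}, and let $F_0,F_1\colon\BGraphs_n\to T$ be the two composites obtained by traversing the square; both are morphisms of homotopy Hopf cooperads under $H(BG)$. The plan is to construct a homotopy $F_0\simeq F_1$ through such morphisms, that is, a morphism $\BGraphs_n\to T[t,dt]$ of homotopy Hopf cooperads under $H(BG)$ restricting to $F_0$ and $F_1$ at $t=0$ and $t=1$. A first, clarifying observation is that $F_0$ and $F_1$ \emph{already coincide} after post-composition with the morphism $p\colon T\to\OmPA(\FM_n)$ induced by the basepoint inclusion $\FM_n\hookrightarrow G^{\bullet}\times WG\times G^{\bullet}\times\FM_n$: the map $p$ kills the augmentation ideal of $H(BG)$ -- hence the Maurer--Cartan element $m$ -- and sends the equivariant propagator on $\FM_n(2)$ to the ordinary volume form, so that $p\circ F_0=p\circ F_1$ is the composite $\BGraphs_n\xrightarrow{\varepsilon\otimes\id}\Graphs_n\xrightarrow{\omega}\OmPA(\FM_n)$ of the augmentation of $H(BG)$ with the Kontsevich quasi-isomorphism of Section~\ref{sec:graphical-models} (for $F_1$ this uses the counit axiom of the $\hat H(G)$-coaction). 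Hence all of the content of the statement lies in \emph{lifting} the trivial homotopy of $p\circ F_0$ and $p\circ F_1$ across $p$.

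To perform this lift I would follow the strategy of~\cite{KhoroshkinWillwacher2017} layer by layer. The target $T$ is obtained from $\OmPA(\FM_n)$ by successively adjoining models of the three ``group directions'' ($G^{\bullet}$, $WG$, $G^{\bullet}$), each of which is controlled either by the almost acyclic Koszul-type complexes $K_G=(H(BG)\otimes H(G),d_\kappa)$ and $\hat K_G$ of Section~\ref{sec:equiv-graph-models}, or by $H(BG)$ with $BG$ simply connected. The obstructions to extending a homotopy across each such layer are therefore cocycles in deformation complexes whose relevant cohomology vanishes, exactly as in the proofs of~\cite[Theorems~5.5 and~6.7]{KhoroshkinWillwacher2017}; the induction is run along the filtration of $\BGraphs_n=(H(BG)\otimes\Graphs_n,d_m)$ by polynomial degree in $H(BG)$ together with the number of internal vertices. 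Concretely, the lifted homotopy is realized by a single ``interpolating propagator'', an equivariant form on $\FM_n(2)\times[0,1]$ restricting at $t=0$ to the equivariant propagator used by $F_0$ (one explicit choice being recalled in Appendix~\ref{sec:explicit_prop}) and at $t=1$ to the ``plain propagator together with $\hat H(G)$-coaction'' datum used by $F_1$; its existence follows from the same vanishing of obstruction groups. One then feeds it into the usual integral formulas, checking the $dt$-deformed defining equation relative to $d_m$ by means of the Stokes and fibre-integration formalism of~\cite{HardtLambrechtsTurchinVolic2011} (compare~\cite{Kontsevich1999}).

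The main obstacle is to make the lifted homotopy compatible with the Hopf cooperad structure and with the fat totalization $\Tot$ over $\Delta_+$ simultaneously: one must check that the interpolating propagator, its equivariant extension, and the layer-by-layer lifts can be transported along the edge contractions of $\fT$ and along the cosimplicial structure by the very rules defining the contraction and cutting morphisms of $T$. As in~\cite{KhoroshkinWillwacher2017} this is arranged by fixing all choices once and for all on $\FM_n(2)$ and propagating them through the universal Feynman-rules recipe, so that treewise and $\Tot$-compatibility are automatic; since none of the choices involves the $H(BG)$-factor, the homotopy produced is a homotopy under $H(BG)$. Finally, Corollary~\ref{cor:bgraphsdiag} follows by applying the base-change functor $(-)\otimes_{H(BG)}A$ along the dgca morphism $H(BG)\to A$ of Section~\ref{sec:model-frame-bundle} to the homotopy-commutative square, using the identifications $A\otimes_{H(BG)}\BGraphs_n\cong\Graphs_n^M$ and $A\otimes_{H(BG)}T\simeq\Tot\OmPA(\Fr_M^W\times G^{\bullet}\times\FM_n)$ from Sections~\ref{sec:graph-model-fiber} and~\ref{sec:zigzag}; since $\BGraphs_n$ is free over $H(BG)$, this sends the homotopy $\BGraphs_n\to T[t,dt]$ to the required homotopy between the two composites in~\eqref{equ:bgraphsdiag2}.
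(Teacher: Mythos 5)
Your last paragraph --- deducing the corollary from Proposition~\ref{prop:bgraphsdiag} by base change along $H(BG)\to A$ (equivalently, pulling back via $A\simeq\OmPA(M)$) --- is exactly how the paper obtains the corollary, so that step is fine. The gap is in your proposed proof of the proposition itself. Your central claim, that after noting $p\circ F_0=p\circ F_1$ the homotopy can be lifted layer by layer because ``the obstructions are cocycles in deformation complexes whose relevant cohomology vanishes,'' is unjustified and not true as stated: the complex controlling homotopies between maps of homotopy Hopf cooperads $\BGraphs_n\to \Tot(\OmPA(G^\bullet\times WG\times G^\bullet\times\FM_n))$ under $H(BG)$ is of hairy/graph-complex type (essentially built from $H(BG)\hotimes\GC_n$ and its relatives), whose cohomology is far from trivial; the acyclicity of $K_G$ and simple connectivity of $BG$ do not make such a lifting unobstructed, and the results of \cite{KhoroshkinWillwacher2017} you invoke (Theorems~5.5 and~6.7) are explicit integral-formula constructions of quasi-isomorphisms, not homotopy-lifting statements. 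Moreover the paper deliberately avoids developing an obstruction theory for homotopy Hopf cooperads, so no such machinery is available off the shelf here.

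What actually makes the two rims of \eqref{equ:bgraphsdiag} homotopic is a specific, explicitly verified identity, not a vanishing of obstruction groups: writing the Maurer--Cartan element $m$ in the two sets of Cartan variables (the $u_{ij}$ for the upper rim, the $\tilde u_{ij}$, i.e.\ the $\hat H(G)$-coaction side, for the lower rim) gives two gauge-equivalent MC elements, the gauge being realized by the explicit interpolation $x_{ij,t}=(1-t)u_{ij}+t\tilde u_{ij}-dt\,v_{ij}$, and the homotopy of maps requires twisting by the resulting parallel transport: in the paper $F_1(\Gamma)=\phi_1(A\Gamma)$ with $A$ the group-like element solving $\dot A_t=h_tA_t$ (see the construction in the appendix and Lemma~\ref{lem:outerrim}), the whole thing being organized through the three-colored cooperad $\BGBbiGr_n$ and the iterated-integral comparison of the two-sided cobar model of $H(BG)$ with the Cartan-type model. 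Your ``interpolating propagator'' alone, without this gauge correction, does not even yield a chain map out of the twisted complex $\BGraphs_n=(H(BG)\otimes\Graphs_n,d_m)$, and your remark that the homotopy is automatically ``under $H(BG)$'' because none of the choices involves the $H(BG)$-factor is also off the mark: the equivariant propagator and $m$ have coefficients in $H(BG)$ by construction. So while the overall shape of your argument (two maps out of $\BGraphs_n$, a structure-compatible homotopy, then base change) matches the paper, the core mechanism --- the explicit gauge/parallel-transport homotopy --- is missing and is not replaceable by the obstruction-theoretic appeal you make.
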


We will in fact show that one has a homotopy, in the naive sense that one has a map:
\[
  \BGraphs^m_n \to \Tot ( \OmPA(G^\bullet\times WG \times G^\bullet\times \FM_n))[t,dt]
\]
compatible with the homotopy cooperadic structure, whose restriction to $t=0$ agrees with the upper rim of the above diagram, and whose restriction to $t=1$ agrees with the lower rim.

To show the statement we need several auxiliary constructions and lemmas that we are going to detail in the next sections.

\subsection{Two models for $H(\BG)$}
\label{app:cartan-model}

\newcommand{\Car}{\mathrm{Car}}
In this section, we study two models for $\BG$.

First, in Equation~\eqref{equ:bgraphsdiag} above, the dgca $(H(\BG)\otimes \hat H(G)\otimes H(\BG),d)$ appears.
Here, $\hat H(G)$ is the canonical (Koszul) resolution of the coalgebra $H(G)$, i.e., it is a cofree coassociative coalgebra cogenerated by the reduced cohomology $\tilde H(\BG)[-1]$ (see also Section~\ref{sec:equiv-graph-models}).
Put differently, elements of $\hat H(G)$ are words in a basis of monomials in the Pontryagin and Euler classes.
A typical element of $\hat H(G)$ is, for example,
\[
  (P_4 P_8^2)(P_{16}) (P_4^3) \in \hat{H}(G).
\]
The product on such words is the shuffle product, and the differential on $\hat H(G)$ merges two adjacent monomials.
Within the dgca $H(\BG)\otimes \hat H(G)\otimes H(\BG)$, there is an additional piece of the differential, which takes the first (respectively last) monomial in the word and identifies it with an element of the left (respectively right) factor $H(\BG)$.
In other words, it is the two-sided cobar construction $\Omega(H(\BG), \tilde{H}(\BG)[-1], H(\BG))$, and as such it is a resolution of $H(\BG)$.

The second model of $H(\BG)$ we consider is as follows.
We consider a version of the Cartan model
\[
  \Car :=  (\R[u_{ij}, v_{ij}, \tilde u_{ij} \mid 1 \leq i, j \leq n])^G.
\]
The generators $u_{ij}=-u_{ji} \in \mathfrak{so}_{n}^{*}[-2]$ and $\tilde u_{ij}=-\tilde u_{ji}$ (of a different copy of $\mathfrak{so}_{n}^{*}[-2]$) have degree $+2$, while the generators $v_{ij}=-v_{ji}$ have degree $+1$.
The differential is defined such that $dv_{ij}=u_{ij}-\tilde u_{ij}$.

In what follows, for $f\in H(\BG)$ a polynomial in Euler and Pontryagin classes, we denote (slightly abusively) the image polynomial in variables $u_{ij}$ by
\[
  f(\dots, u_{ij}, \dots),
\]
and similarly for $\tilde{u}_{ij}$ and $v_{ij}$.

There is a direct map
\begin{equation}
  \label{eq:cmp-two-models}
  \Psi : (H(\BG)\otimes \hat H(G)\otimes H(\BG),d) \to \Car
\end{equation}
defined as follows:
\begin{itemize}
  \item On the left (resp.\ right) $H(\BG)\cong \R[a_{ij}]^G$, $\Psi$ is defined by replacing $a_{ij}$ by $u_{ij}$ (resp.\ $\tilde u_{ij}$).
        In other words, given an element $f \in H(\BG)$, we consider either $f(\dots,u_{ij},\dots)$ or $f(\dots,\tilde{u}_{ij},\dots)$.
  \item Let us formally denote, for $t\in \R$,
        \begin{equation}
          \label{eq:xijt}
          x_{ij,t} \coloneqq (1-t) u_{ij} +t\tilde u_{ij} - dt v_{ij}.
        \end{equation}
        The map $\hat H(G)\to \Car$ sends a word
        \[
          f_1\cdots f_k
        \]
        to the $k$-fold iterated integral
        \[
          \Psi(1 \otimes f_1 \dots f_k \otimes 1)
          \coloneqq
          \iiint_{\;\mathrlap{0\leq t_1\leq t_2 \leq \cdots \leq t_k\leq 1}}
          f_1(\dots, x_{ij,t_1},\dots)
          \cdots
          f_k(\dots, x_{ij,t_k},\dots).
        \]
\end{itemize}

\begin{lemma}
  The map $\Psi$ of Equation~\eqref{eq:cmp-two-models} is a quasi-isomorphism of dgcas $(H(\BG)\otimes \hat H(G)\otimes H(\BG),d) \to \Car$.
\end{lemma}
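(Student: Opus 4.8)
The plan is to sandwich the map $\psi\colon(H(BG)\otimes\hat H(G)\otimes H(BG),d)\to\Car$ of \eqref{eq:cmp-two-models} between the canonical augmentations of its source and target onto $H(BG)$ and then to conclude by the two-out-of-three property. For the target I would first observe that the substitution $\tilde u_{ij}=u_{ij}+w_{ij}$ identifies $\Car$ with $(\R[\dots,u_{ij},\dots]\otimes K)^{G}$, where $K=(\R[\dots,w_{ij},\dots]\otimes\Lambda(\dots,v_{ij},\dots),\,dv_{ij}=-w_{ij})$ is an acyclic Koszul complex with $H(K)=\R$; since $G$ is compact and we work over $\R$, the invariants functor $(-)^{G}$ is exact, so Künneth gives $H(\Car)=(\R[\dots,u_{ij},\dots]\otimes\R)^{G}=H(BG)$, and this identification is induced by the evident dgca map
\[
\epsilon_{\Car}\colon\Car\longrightarrow H(BG),\qquad u_{ij}\mapsto u_{ij},\quad\tilde u_{ij}\mapsto u_{ij},\quad v_{ij}\mapsto 0
\]
(well defined because $\epsilon_{\Car}(dv_{ij})=u_{ij}-u_{ij}=0$). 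Thus $\epsilon_{\Car}$ is a quasi-isomorphism of dgcas. On the source side, as noted just before the statement, $(H(BG)\otimes\hat H(G)\otimes H(BG),d)$ is the two-sided bar resolution of $H(BG)$, so the ``multiply everything out'' dgca map $\epsilon_{\mathrm{src}}$ sending $a\otimes(f_{1}\cdots f_{k})\otimes b$ to $ab$ for $k=0$ and to $0$ for $k\ge 1$ is a quasi-isomorphism.

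Next I would check that the triangle formed by $\psi$, $\epsilon_{\mathrm{src}}$ and $\epsilon_{\Car}$ commutes. Applying $\psi$ to $a\otimes(f_{1}\cdots f_{k})\otimes b$ produces $a(\dots,u_{ij},\dots)$ times the iterated integral $\iiint_{0\le t_{1}\le\cdots\le t_{k}\le 1}\prod_{r}f_{r}(\dots,x_{ij,t_{r}},\dots)$ times $b(\dots,\tilde u_{ij},\dots)$. Postcomposing with $\epsilon_{\Car}$ sends $x_{ij,t}=(1-t)u_{ij}+t\tilde u_{ij}-dt\,v_{ij}$ to $u_{ij}$, which carries no $dt$-component; hence for $k\ge1$ the integrand has no $dt_{1}\cdots dt_{k}$-component on $\Delta^{k}$ and the iterated integral vanishes, while for $k=0$ it is $1$. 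Therefore $\epsilon_{\Car}\circ\psi=\epsilon_{\mathrm{src}}$, and since both augmentations are quasi-isomorphisms, so is $\psi$.

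Finally one must know that $\psi$ really is a morphism of dgcas in the first place. Multiplicativity is the classical shuffle identity for iterated integrals (the product on $\hat H(G)$ being the shuffle product). Compatibility with the differentials is Stokes' formula on $\Delta^{k}$: the interior faces $\{t_{i}=t_{i+1}\}$ give back the merging part of the bar differential on $\hat H(G)$, and the two extreme faces $\{t_{1}=0\}$ and $\{t_{k}=1\}$, on which $x_{ij,0}=u_{ij}$ and $x_{ij,1}=\tilde u_{ij}$, give back the two identifications of the outermost letters of a word with the flanking $H(BG)$-factors. I expect this Stokes bookkeeping — tracking signs and the various degree shifts consistently — to be the only genuinely delicate point; once $\psi$ is known to be a dgca map, the quasi-isomorphism assertion follows formally as above.
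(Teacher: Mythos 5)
Your proof is correct and takes essentially the same route as the paper's: establish the dgca-map property via the shuffle formula and Stokes' theorem, then deduce the quasi-isomorphism by a two-out-of-three argument comparing both sides with $H(BG)$. The only difference is that the paper uses the inclusions of $H(BG)$ as the first factor into source and target (so the relevant triangle commutes trivially), whereas you use the augmentations onto $H(BG)$, at the cost of the extra --- and correct --- observations that the iterated integrals vanish after setting $\tilde u_{ij}\mapsto u_{ij}$, $v_{ij}\mapsto 0$ and that this augmentation of $\mathrm{Car}$ is a quasi-isomorphism by the Koszul-complex argument.
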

\begin{proof}
  First, let us check that the map intertwines the commutative products.
  This is clear on the factors $H(\BG)$.
  On the factor $\hat H(G)$ it follows from the usual shuffle formula for iterated integrals.

  Second, we check that the map commutes with the differentials.
  This follows easily from Stokes' Theorem, and the fact that the $x_{ij,t}$ are closed under the combined differential (the differential on the complex plus the de Rham differential in $t$).

  Finally, we check the quasi-isomorphism property.
  Both inclusions of $H(\BG)$ as the first factor in both the domain and the target are quasi-isomorphisms.
  The result thus follows by the 2-out-of-3 property of quasi-isomorphisms.
\end{proof}

\subsection{Variants of graph complexes and graph cooperads}

We will now consider graph complexes $\biGC_n$ and $\biGr_n$.
The idea is that we are going to build a cylinder object for $\Graphs_n$.

These complexes are defined similarly to $\GC_n$ and $\Graphs_n$, except that we distinguish three types of edges.
We call these type $u$-edges, $\tilde u$-edges and $v$-edges, marked by an appropriate letter in drawings.
We impose the differential on $\biGr_n$
\[
  d\,
  \bigl(
  \begin{tikzpicture}[baseline=-1ex]
      \draw (0,0) edge node[above]{$\scriptstyle v$} (1,0);
    \end{tikzpicture}
  \bigr)
  =
  \begin{tikzpicture}[baseline=-1ex]
    \draw (0,0) edge node[above]{$\scriptstyle u$} (1,0);
  \end{tikzpicture}
  \,
  -
  \,
  \begin{tikzpicture}[baseline=-1ex]
    \draw (0,0) edge node[above]{$\scriptstyle \tilde u$} (1,0);
  \end{tikzpicture}
\]
In particular the $v$-edges have degree $n-2$, while the $u$-edges and $\tilde{u}$-edges have degree $n-1$ in $\biGr_n$.
The other summands of the differential contract the $u$- and $\tilde u$-type edges, just like in $\Graphs_{n}$.
We obtain natural maps:
\beq{equ:bigrmaps}
\begin{tikzcd}
  \Graphs_n \ar[shift right,swap]{r}{\phi_1} \ar[shift left]{r}{\phi_0} & \biGr_n \ar{r} & \Graphs_n,
\end{tikzcd}
\eeq
where the map $\phi_0$ on the left send a graph to the same graph with all edges marked by $u$, the map $\phi_1$ marks all edges by $\tilde u$, and the right-hand map identifies (forgets) colors $u$ and $\tilde u$ and sends $v$ to zero.
All these maps are quasi-isomorphisms.

For $\biGC_n$, we have the corresponding dual differential and grading conventions.
Dually, we have maps defined likewise on the dual complexes of connected graphs with only internal vertices:
\begin{equation}
  \label{eq:gc-bigc}
  \GC_n \to \biGC_n \rightrightarrows \GC_n.
\end{equation}

Recall the Cartan model $\Car = \bigl( \R[u_{ij}, v_{ij}, \tilde{u}_{ij}] \bigr)^{G}$ of $H(\BG)$ (see Section~\ref{app:cartan-model}).
We now define a differential on $\Car \otimes \biGr_{n}$ such that there is a natural map of homotopy cooperads over $\Car$,
\begin{equation*}
  \BGBbiGr_{n} \coloneqq (\Car \otimes \biGr_n, d)
  \to
  \Tot \OmPA(G^\bullet \times WG \times G^\bullet \times \FM_n)^{W_r},
\end{equation*}
given as follows:
\begin{itemize}
  \item The $u$-edges are sent to the corresponding equivariant propagators in the $u_{ij}$.
  \item The $\tilde u$-edges are sent to the propagators in the $\tilde u_{ij}$.
  \item The $v$-edges are sent to interpolating forms between the two equivariant propagators, defined similarly to $x_{ij,t}$ from Equation~\eqref{eq:xijt}.
  \item On $\Car$, the map is defined similarly to the map from the non-toric Cartan model from Appendix~\ref{sec:explicit_prop}.
\end{itemize}
The differential on $\BGBbiGr_{n}$ uses the MC element $m \in H(\BG) \hotimes \GC_{n}$ from Equation~\eqref{eq:mc-elt-m}.
We first map it to $m^{\mathrm{bi}} \in H(\BG) \hotimes \biGC_{n}$ using the map $\GC_{n} \to \biGC_{n}$ of Equation~\eqref{eq:gc-bigc}.
Then the differential of $\Gamma \in \biGr_{n}$ is $1 \otimes 1 \otimes m^{\mathrm{bi}} \cdot \Gamma \in H(\BG) \otimes \hat{H}(G) \otimes H(\BG) \otimes \biGr_{n}$.

\subsection{Construction}
\label{sec:construction}
We now describe a general construction that we will apply in the next section.
Suppose that $\alg g$ and $\alg h$ are dg Lie algebras, acting on modules $U$ and $V$, respectively.
Suppose that we have maps of dg Lie algebras $f:\alg h \to \alg g$ and of modules $F:U\to V$. Concretely, for $u\in U$ and $h\in\alg h$:
\[
  h\cdot F(u) = F(f(h) \cdot u).
\]
Suppose that $\mu\in \alg g$ and $m\in \alg h$ are Maurer--Cartan elements.
Finally, suppose that we have a gauge transformation:
\[
  f(m) \simeq \mu.
\]
Such a gauge transformation may be integrated (provided suitable (pro-)nilpotence properties) to a group-like element $A\in \U\alg g$ satisfying
\[
  A^{-1} f(m) A = \mu.
\]
Under these conditions we may build a map of the twisted dg vector spaces (provided again suitable nilpotence conditions guaranteeing convergence)
\begin{gather*}
  F^{m} : U^\mu \to V^{m} \\
  F^{m}(u) = F( A\cdot u).
\end{gather*}
It is an elementary exercise to verify that this map indeed intertwines the differentials.

Let us remark on a special case of this construction that will be used later.
Suppose that in fact our Lie algebras and modules are defined over the ground ring $\R[t,dt]$, and more specifically, assume that
\begin{align*}
  \alg g & = \alg g'[t,dt]
         &
  \alg h & = \alg h'[t,dt]
  \\
  U      & =U'[t,dt]       & V & =V'[t,dt],
\end{align*}
where the actions are extended from actions of $\alg g'$ on $U'$ and $\alg h'$ on $V'$.
We assume that our MC element $m$ above has no $t$-dependence, i.e., that $m\in \alg h'$.
Then $\tilde m = \tilde m_t+ h_t \cdot dt := f(m)\in \alg h$ encodes a family of gauge equivalent MC elements $\tilde m_t\in \alg g$.
Let us choose for the MC element entering the above construction $\mu:= \tilde m_0\in \alg g'\subset \alg g$.
Then indeed $\mu$ and $f(m)$ are gauge equivalent MC elements.
The gauge equivalence is encoded by the MC element
\[
  f(m(t\mapsto st)) \in \alg g[s,ds]
\]
obtained by formally replacing $t$ by $st$ in $\tilde m=f(m)$.
In this case, one can check that the element $A\in \U \alg g$ above is (as function of $t$) the gauge flow encoded by $\tilde m$ up to time $t$. In other words, making explicit the time dependence, $A_t$ is obtained by solving the ODE
\[
  \dot{A}_{t} = h_{t} A_{t}.
\]
Eventually, our construction then produces an $\R[t,dt]$-linear map
\[
  F = U^\mu \to V^m= (V')^m[t,dt],
\]
This map can be seen as an explicit homotopy for the family of maps
\begin{gather*}
  F_t^m : (U')^\mu \to (V')^m \\
  F_t^m(u) = F_t(A_t u).
\end{gather*}

In this construction we use only natural operations. Hence, if there is additional structure (dgcas, cooperads) on our spaces $U$ and $V$ and this structure is preserved by the Lie actions, then our homotopy also is compatible with the structure given.

\subsection{Two maps, and the proof of Proposition \ref{prop:bgraphsdiag}}
We describe next two maps of Hopf cooperads under $H(\BG)$
\[
  F_0,F_1: \BGraphs_n^m \rightrightarrows \BGBbiGr_n = (\Car\otimes \biGr_n,d).
\]
The first sends a graph $\Gamma\in \Graphs_n$ to the same graph with all edges colored $u$.
In other words $F_0$ agrees with the $H(\BG)$-linear extension of \eqref{equ:bigrmaps}.
The second map $F_1$ is the composition
\[
  \Gamma\mapsto F_1(\Gamma) = \phi_1(A\Gamma),
\]
where $\phi_1$ is, up to $H(\BG)$-linear extension, the map from \eqref{equ:bigrmaps} coloring all edges by $\tilde u$.
The element $A\in \Car \otimes \hat{\U}\GC_n$ is a group-like element obtained as the parallel transport from $t=0$ to $t=1$ of the MC element (and in particular flat connection on the interval):
\[
  m(\dots, x_{ij,t},\dots) \in \Car\otimes \GC_n [t,dt].
\]
More concretely, the characteristic property of the element $A$ is that
\[
  A^{-1}m_1A  =  m_0,
\]
where $m_0=m(\dots, u_{ij},\dots)$ and $m_1=m(\dots, \tilde u_{ij},\dots )$ are the MC elements at the endpoints of the interval.
The construction of the maps $F_0,F_1$ fits exactly the construction of the previous subsection.
In particular they intertwine the differentials properly.
Furthermore, as we saw in the previous subsection $F_0$ and $F_1$ are homotopic.
We shall mark this result in the following lemma:
\begin{lemma}\label{lem:outerrim}
  The two maps
  \[
    F_0,F_1:\BGraphs_n^m \rightrightarrows \BGBbiGr_n
  \]
  are homotopic.
\end{lemma}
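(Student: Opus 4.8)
The plan is to realise both $F_0$ and $F_1$ as the $t=0$ and $t=1$ specialisations of a single $\R[t,dt]$-linear morphism produced by the $\R[t,dt]$-variant of the construction of Section~\ref{sec:construction}; such a morphism is then by definition a homotopy between its two endpoints, and the only remaining work is to check that it respects the homotopy Hopf cooperad structures under $H(BG)$.

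First I would assemble the ingredients required by that construction. Recall that $\GC_n$ (respectively $\biGC_n$) acts on $\Graphs_n$ (respectively $\biGr_n$) by insertion of subgraphs, compatibly with the Hopf cooperad structure, and that this extends $\Car$-linearly to the ambient graphical dg Lie algebras; the colour-forgetting maps of Equation~\eqref{eq:gc-bigc} and the edge-colourings $\phi_0,\phi_1$ of Equation~\eqref{equ:bigrmaps} supply the maps of dg Lie algebras and of modules that the construction asks for. The moving Maurer--Cartan element is
\[
  m(\dots, x_{ij,t}, \dots)\in \Car\otimes \GC_n[t,dt],
\]
obtained from the Maurer--Cartan element $m$ of Equation~\eqref{eq:mc-elt-m} by substituting $x_{ij,t}=(1-t)u_{ij}+t\tilde u_{ij}-dt\,v_{ij}$ (see Equation~\eqref{eq:xijt}) for the generators of $H(BG)$; it is a flat connection on the interval whose endpoints are $m_0=m(\dots,u_{ij},\dots)$ at $t=0$ and $m_1=m(\dots,\tilde u_{ij},\dots)$ at $t=1$. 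I would then let $A_t$ denote its parallel transport from $0$ to $t$, so that $A_0=1$, $A_1=A$, and $A^{-1}m_1A=m_0=m^{\mathrm{bi}}$ (note that, since the all-$u$ map $F_0$ is already a chain map, the twist of $\BGBbiGr_n$ must be by the all-$u$ image $m_0$ of $m$).

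Feeding this data into the construction of Section~\ref{sec:construction} yields the $\R[t,dt]$-linear map
\[
  H\colon \BGraphs_n\longrightarrow \BGBbiGr_n[t,dt],\qquad H(\Gamma)=\phi_t\bigl(A_t\cdot\Gamma\bigr),
\]
where $\phi_t$ colours each edge by $x_{ij,t}$, i.e.\ by $(1-t)$ times its $u$-coloured copy plus $t$ times its $\tilde u$-coloured copy minus $dt$ times its $v$-coloured copy. By that construction $H$ intertwines the differentials, the twist on the target being exactly $m^{\mathrm{bi}}$; its restriction at $t=0$ is $\phi_0(A_0\cdot{-})=\phi_0=F_0$, and its restriction at $t=1$ is $\phi_1(A\cdot{-})=F_1$. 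Since the construction only involves natural, $H(BG)$-linear operations and the $\GC_n$-action is by cooperadic bi-derivations, $H$ is automatically a homotopy through morphisms of homotopy Hopf cooperads under $H(BG)$, which is what is claimed.

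I expect the real work to be in verifying that $\phi_t\circ(A_t\cdot{-})$ is a chain map into $(\Car\otimes\biGr_n,d_{m^{\mathrm{bi}}})$ uniformly in $t$, which is exactly the content of the gauge relation between $m_0$ and $m_1$ realised by the connection $m(\dots,x_{ij,t},\dots)$, together with the fact that the colouring $\phi_t$ intertwines the source twist $m$ with its $t$-coloured image; this also involves some bookkeeping to pass between the several graph complexes and Cartan models in play. The remaining technical point is (pro-)nilpotence: one must check that the series defining $A_t$ and $H$ converge, which follows from filtering graphs by their first Betti number and noting that every graph occurring in $m$ has positive loop order, so that insertion of $m$ strictly increases the filtration.
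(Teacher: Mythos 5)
Your proposal is correct and follows essentially the same route as the paper: the paper's own proof is precisely an appeal to the $\R[t,dt]$-special case of the construction of Section~\ref{sec:construction}, applied to the flat connection $m(\dots,x_{ij,t},\dots)\in\Car\otimes\GC_n[t,dt]$ with parallel transport $A_t$, yielding the explicit homotopy $\Gamma\mapsto\phi_t(A_t\cdot\Gamma)$ whose endpoints are $F_0$ and $F_1$ and which respects the (homotopy) Hopf cooperad structure because only natural, structure-preserving operations are used. You in fact spell out more of the bookkeeping (the identification of the target twist and the nilpotence needed for convergence) than the paper does.
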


Finally, we note the following result, which follows by explicit computation.
\begin{lemma}
  The upper and lower compositions
  \[
    \begin{tikzcd}
      \BGraphs_n^m \ar[shift left]{r}{F_0} \ar[shift right,swap]{r}{F_1} & \BGBbiGr_n \ar{r} \ar{r} & \Tot ( \OmPA(G^\bullet\times WG \times G^\bullet\times \FM_n))
    \end{tikzcd}
  \]
  agree with the upper and lower composition in the diagram \eqref{equ:bgraphsdiag}.
\end{lemma}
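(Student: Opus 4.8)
The plan is to prove the Lemma by a direct computation: one unwinds the four maps in play and checks that they agree on a general graph $\Gamma\in\Graphs_n$, edge by edge and generator by generator. No homotopy-theoretic input is used at this stage; the homotopy between the two sides is supplied separately by Lemma~\ref{lem:outerrim}, so that together with the present Lemma one obtains the homotopy commutativity of~\eqref{equ:bgraphsdiag} asserted in Proposition~\ref{prop:bgraphsdiag}. I expect the statement to match $F_0$ with the upper composition and $F_1$ with the lower one.

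For the $F_0$-composition I would argue as follows. By definition $F_0$ sends $\Gamma$ to the same graph with all edges colored $u$, extended $H(BG)$-linearly, while the structure map $\BGBbiGr_n\to\Tot\OmPA(\dots)$ of the previous subsection sends a $u$-edge to the equivariant propagator depending only on the first family of $G$-factors and on the $u_{ij}$, and on the subalgebra $\R[u_{ij}]^G\subset\Car$ it restricts to the equivariant Cartan-model map. Consequently the resulting form is pulled back along the projection that forgets the $WG$- and the second $G^\bullet$-factors, and under this pullback it is the image of $\Gamma$ under the equivariant Feynman rules $\BGraphs_n\to\Tot\OmPA(G^\bullet\times\FM_n)$. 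This is precisely the upper composition in~\eqref{equ:bgraphsdiag}; the verification amounts to recognizing two formulas for the same propagator.

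For the $F_1$-composition I would invoke the comparison quasi-isomorphism~\eqref{eq:cmp-two-models}, $(H(BG)\otimes\hat H(G)\otimes H(BG),d)\to\Car$: under it the middle factor $\hat H(G)$ goes to iterated integrals of the interpolating forms $x_{ij,t}$ of~\eqref{eq:xijt}, which is exactly the path-ordered exponential (Chen iterated-integral series) computing the parallel transport $A$ of the flat connection $m(\dots,x_{ij,t},\dots)$ over $[0,1]$. Since the lower horizontal arrow of~\eqref{equ:bgraphsdiag} is by construction induced by the $\hat H(G)$-coaction on $\Graphs_n$ attached to the Maurer--Cartan element $m$, and since $\phi_1$ colors the remaining edges by $\tilde u$, which are sent to propagators in the $\tilde u$-variables and matched with the right-hand copy $\R[\tilde u_{ij}]^G$ of $H(BG)$, one finds that the lower composition sends $\Gamma$ to $\phi_1(A\Gamma)$ followed by the structure map $\BGBbiGr_n\to\Tot\OmPA(\dots)$ --- that is, to $F_1$ followed by that map. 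Hence the $F_1$-composition agrees with the lower composition.

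The only genuine work is in matching conventions: one must check that the iterated-integral formula defining $\hat H(G)\to\Car$ carries the same sign, normalization and path-ordering as the parallel transport $A$ from the previous subsection, and that the two copies of $G^\bullet$, together with the $WG$-factor, in $G^\bullet\times WG\times G^\bullet\times\FM_n$ are indexed so that $u$- and $\tilde u$-edges land where claimed. I expect the bookkeeping around the $W$-construction and the fat totalization $\Tot$ to be the main obstacle; once the conventions are pinned down the identity is forced and follows by an explicit evaluation on $\Gamma$.
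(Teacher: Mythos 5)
Your strategy is the same one the paper intends (it offers nothing beyond ``follows by explicit computation''): unwind both compositions on a general graph, match the $u$-edge realization against the equivariant Feynman rules \eqref{eq:omega-equivar} for the $F_0$-half, and, for the $F_1$-half, recognize that the iterated-integral comparison map $\hat H(G)\to \mathrm{Car}$ of \eqref{eq:cmp-two-models} is precisely the path-ordered exponential computing the parallel transport $A$, so that ``coact by $\hat H(G)$ according to $m$, then realize'' coincides with ``multiply by $A$, color by $\tilde u$, then realize''. That part of your argument, and your matching of $F_0$ with the upper and $F_1$ with the lower composition, is on target.

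One step, as literally written, would not go through. For the $F_0$-half you claim the resulting form is pulled back along the projection $G^\bullet\times WG\times G^\bullet\times\FM_n\to G^\bullet\times\FM_n$ that \emph{forgets} the $WG$- and second $G^\bullet$-factors, and that this identifies it with the upper composition of \eqref{equ:bgraphsdiag}. But the right vertical arrow of \eqref{equ:bgraphsdiag} is the pullback along the resolution/action map, which uses the augmentation $WG\to G$ and the product of the second bar coordinates acting on $\FM_n$ (it is the analogue of the map dual to $\widehat{\FM}{}^{\bullet}_{n}\to\FM_n$ from Section~\ref{sec:equiv-graph-models}, and it is what descends to the right vertical arrow of \eqref{equ:bgraphsdiag2}); it is not the forgetful projection, and since an equivariant form is not levelwise invariant the two pullbacks genuinely differ. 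Correspondingly, the realization of a $u$-edge in $\BGBbiGr_n$ has to be read as the equivariant propagator in the first-bar variables evaluated at the \emph{acted-upon} configuration coordinate: with that convention the $F_0$-composition coincides with the upper composition (and the $v$-edges interpolate consistently with $dv=u-\tilde u$), whereas with your forgetful-projection reading the identification would fail. So the conclusion stands, but this piece of the ``convention matching'' you defer is exactly where the content of the $F_0$-half sits, and it should be made explicit rather than left as bookkeeping.
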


Hence, Proposition \ref{prop:bgraphsdiag} follows immediately from the preceding two lemmas and the construction of Section~\ref{sec:construction}.

\section{Hopf formality}\label{sec:hopf-formality}

In this section, we prove a Hopf formality result for compact Lie groups that could be found in \cite[Proposition 4.5]{KhoroshkinWillwacher2017}.

\begin{prop}\label{prop:hopf-formal}
  Let $G$ be a compact connected Lie group and let $A_G = W\OmPA(G)$ be the complete Hopf algebra from Definition~\ref{def:a-g}.
  There exists a quasi-isomorphism of complete Hopf algebras:
  \begin{equation}
    \upsilon_G : H(G) \to A_G .
  \end{equation}
  It follows that $A_G$ is formal.
\end{prop}

\begin{proof}
  Let $\oW\OmPA(G) \subset W\OmPA(G)$ be the set of primitive elements, so that $A_G$ is the bar construction on $\oW\OmPA(G)$.
  Moreover, let us denote by $V \subset H(G)$ the space of primitive elements (concentrated in odd degrees), and note that since $G$ is connected, $H(G)$ is generated by $V$.
  Since $H(G)$ is free and $A_G$ is quasi-cofree, any map $F : H(G) \to A_G$ is determined by its restriction on generators composed with the projection on cogenerators,
  \begin{equation}
    f : V \to \oW\OmPA(G).
  \end{equation}
  Moreover, the condition that $F$ is a morphism of complete (dg) Hopf algebras is equivalent to the Maurer--Cartan equation:
  \begin{equation}\label{eq:mc-hopf-fmlt}
    \forall v \in V, \; d_{\oW\OmPA(G)} f(v) + \sum_{(v)} F(v') F(v'') = 0,
  \end{equation}
  where we use the Sweedler notation $\Delta(v) = \sum_{(v)} v' \otimes v''$.
  Since $V$ consists of primitive elements, the second term actually vanishes, so all we need to do is find a map $f : V \to \oW\OmPA(G)$ which lands in the space of closed elements.

  Let us build such a map $f$ inductively, using the filtration $\folF_\bullet$ by the number of vertices in the linear trees underlying $\oW\OmPA(G)$.
  For this filtration, just note that the associated graded of $\Hom(V, \oW(\OmPA(G)))$ has cohomology $\Hom(V, \BarOp^c(H(G)))$.

  We would thus like to build maps $f_p : H(G) \to \folF_1 / \folF_{p+1}$ that satisfy condition~\eqref{eq:mc-hopf-fmlt}.
  The case $f_0$ is trivial.
  For $f_1 : H(G) \to \folF_1 / \folF_2$, we just choose any map $V \to \OmPA(G)$ which takes a generator to a closed representative; condition~\eqref{eq:mc-hopf-fmlt} is clearly satisfied.

  Now, let us assume that we have built $f_p : H(G) \to \folF_1 / \folF_{p+1}$.
  To extend such a map to $f_{p+1}$, we must find closed forms in $\OmPA([0,1])^{\otimes p} \otimes \OmPA(G^{p+1})$ with the prescribed boundary values.

  To illustrate the question, let us consider $p = 1$.
  To extend $f_1$ to $f_2$, for a generator $v \in H^k(G)$, there are three trees to consider:
  \begin{align*}
    \begin{tikzpicture}[baseline=(b)]
      \node {} [ourTree] child {
          node[int] (b) {}
          child {node{}}
        };
    \end{tikzpicture}
     & \xmapsto{f_2(v)}
    \omega \in \OmPA(G),
     &
    \begin{tikzpicture}[baseline=(b)]
      \node {} [ourTree] child {
          node[int] (b) {}
          child {
              node[int]{}
              child {node{}}
            }
        };
    \end{tikzpicture}
     & \xmapsto{f_2(v)}
    \alpha \otimes p \in \OmPA(G \times G) \otimes \OmPA([0,1]),
    \\
     &                  &
    \begin{tikzpicture}[baseline=(b)]
      \node {} [ourTree] child {
          node[int] (b) {}
          child {
              node[int]{}
              edge from parent[dashed]
              child {node{} edge from parent[solid]}
            }
        };
    \end{tikzpicture}
     & \xmapsto{f_2(v)}
    \nu_1 \otimes \nu_2 \in \OmPA(G) \otimes \OmPA(G)
  \end{align*}
  Since $f_2$ extends $f_1$, we have that $\omega = f_1(v)$.
  Moreover, the fact that $F_2$ is a map of coalgebras determines $\nu_1 \otimes \nu_2 = (f_1 \otimes f_1)(\Delta(v))$.
  So, extending $f_1$ to $f_2$ boils down to find $\alpha \otimes p$ such that $p(1) \alpha = \nu_1 \times \nu_2$ and $p(0) \alpha = m^* \omega$, where $m : G \times G \to G$ is the product.
  The obstruction to find such a form lies in $H^{k+1-1}(G^2) = H^k(G^2)$, and solutions are parametrized (up to exact terms) by $H^{k-1}(G^2)$.

  Let us now outline the general case.
  For a generator $v \in V^k$ of degree $k$, the obstruction for finding such elements lives in
  \[H^{k+1-p}(G^{p+1}) \subset H^{k}(G^{p+1} \times \partial [0,1]^p).\]
  Moreover, if the obstruction vanishes, the possibles choices differ by closed forms $u$ vanishing on the boundary of the cube, and as such are parametrized (up to exact terms) by $H^{k-p}(G^{p+1}) \subset H^{k-p+1}(G^{p+1} \times [0,1], G^{p+1} \times \partial [0,1])$.

  We may identify $H(G^{p+1})$ as a subspace of the cobar construction $\BarOp^c(H(G))$.
  Our obstruction is, in fact, a closed element there as it is obtained by capping a coboundary with the fundamental class of $\partial [0,1]^p$.
  By exercising our choices by adding $u$ in the previous induction step we add the cobar differential $d_{\BarOp^c} u$ to our obstruction (as we can add a coboundary of a form of the type $\beta \times \mathrm{vol}_{\partial [0,1]^p}$).
  Hence the obstruction lives in $H^{k}(\BarOp^c H(G))$, taking into account a degree shift in the definition of the cobar construction.
  But here $k$ is odd (all generators of $H(G)$ are in odd degrees), while $\BarOp^c H(G)$ computes $H(BG)$ which is concentrated in even degrees.
  The obstruction must therefore vanish.
\end{proof}

Moreover, this quasi-isomorphism can be extended as follows. Compare with \cite[Proposition 4.11]{KhoroshkinWillwacher2017}, where the following notation differs: our $K_G$~\eqref{eq:koszul-complex} is denoted $\tilde{K}$, and our $\modo_{A_G}(*)$~\eqref{eq:mod-a-g} is denoted $K_G$.

\begin{prop}\label{prop:hopf-extension}
  The quasi-isomorphism $\upsilon_G : H(G) \to A_G$ from Proposition~\ref{prop:hopf-formal} can be extended into a quasi-isomorphism:
  \begin{equation}
    \Upsilon_G : K_G \xrightarrow{\sim} \modo_{A_G}(*),
  \end{equation}
  where $K_G$ is the Koszul complex~\eqref{eq:koszul-complex},
  and the construction $\modo_{A_G}$ is defined in \eqref{eq:mod-a-g}.

  The map $\Upsilon_G$ is a quasi-isomorphism of dgcas (and therefore gives $\modo_{A_G}(*)$ an $H(\BG)$-module structure).
  Moreover, $\Upsilon_G$ is a morphism of $A_G$-comodules, using $\upsilon_G$ from Proposition~\ref{prop:hopf-formal} to view $K_G$ as an $A_G$-comodule, i.e., the following diagram commutes (where dashed arrows represent coactions):
  \begin{equation}
    \begin{tikzcd}
      H(G) \ar[r, "\upsilon_G"] \ar[d, dashed, "\text{coact}"] & A_G \ar[d, dashed, "\text{coact}"] \\
      K_G \ar[r, "\Upsilon_G"] & \modo_{A_G}(*).
    \end{tikzcd}
  \end{equation}
\end{prop}

\begin{proof}
  This follows from similar obstruction-theoretic arguments as in the proof of Proposition~\ref{prop:hopf-formal}.
  Indeed, $K_G$ is free as a dgca and $\modo_{A_G}(*)$ is defined as a $W$-construction under the $A_G$-comodule structure.
  The idea of the proof is to build a map $f : K_G \to \modo_{A_G}(*)$ by induction on the number of vertices in the linear trees underlying $K_G$, just as before.
\end{proof}

\makeatletter
\providecommand\@dotsep{5}
\makeatother
\listoftodos\relax

\printbibliography

\end{document}